\documentclass{article}
\usepackage{graphicx} 

\usepackage{import}
\usepackage{shortcuts}
\usepackage{enumitem}
\import{./}{xlrboxes.tex}
\pgfplotsset{compat=1.18}

\newtheorem{theorem}{Theorem}[section]
\newtheorem{proposition}[theorem]{Proposition}
\newtheorem{lemma}[theorem]{Lemma}

\newtheorem{corollary}[theorem]{Corollary}

\theoremstyle{remark}
\newtheorem{remark}[theorem]{Remark}

\theoremstyle{definition}
\newtheorem{definition}[theorem]{Definition}
\newtheorem*{notation}{Notation}

\title{Power sum elements in the $G_2$ skein algebra}
\author{Bodie Beaumont-Gould
\and Erik Brodsky
\and Vijay Higgins
\and Alaina Hogan
\and Joseph M. Melby
\and Joshua Piazza}

\begin{document}
\date{}
\maketitle

\begin{abstract}
We study the skein algebras of surfaces associated to the exceptional Lie group $G_2,$ using Kuperberg webs. We identify two 2-variable polynomials, $P_n(x,y)$ and $Q_n(x,y),$ and use threading operations along knots to construct a family of central elements in the $G_2$ skein algebra of a surface, $\mathcal{S}_q^{G_2}(\Sigma),$ when the quantum parameter $q$ is a $2n\text{-th}$ root of unity. We verify these elements are central using elementary skein-theoretic arguments. We also obtain a result about the uniqueness of the so-called transparent polynomials $P_n$ and $Q_n.$ Our methods involve a detailed study of the skein modules of the annulus and the twice-marked annulus.
\end{abstract}

\tableofcontents

\section{Introduction}

The subject of skein theory lies at the intersection of low-dimensional topology and the representation theory of quantum groups. In skein theory, one works with linear relations involving pictures of skeins, which are hybrid objects that can be readily interpreted either in the language of topology or in the language of algebra. The simplest examples of these skein relations are the Kauffman bracket skein relations:

\begin{align*}
\xusebox{posCrossing}&=q \xusebox{4--Vlines} +q^{-1} \xusebox{4--Hlines}\\
\xusebox{unknot}&=-q^2-q^{-2}.
\end{align*}

On one hand, these relations can be used define link invariants, like the Jones polynomial, or to construct the 3-manifold invariants of Witten-Reshetikhin-Turaev \cite[Chapter 13]{Lick97}. On the other hand, these relations faithfully describe the representation theory of quantum $SL_2$; see e.g. \cite{Kup96}.

In this paper, we are interested in the skein theory associated to the rank 2 exceptional Lie group, $G_2.$ In this setting, skeins are certain graphs called \textit{webs.} The skein theory for $G_2$ webs, and for the cases of other rank 2 Lie groups, was developed in the seminal work of Kuperberg \cite{Kup94,Kup96}. Later, the skein theory for higher rank cases of $SL_d$ was developed in \cite{CKM14}. Finding explicit skein relations which faithfully describe the representation categories of Lie groups for all Dynkin types is an active area of research; see \cite{BERT21,SW22} for recent examples. Many of these skein theories are first developed in the setting where $q$ is a generic parameter. One can also study whether the same skein relations still faithfully describe, in some sense, the representation category when $q$ is root of unity; see \cite{Eli15,Bod22} for instance.

The setting of our work in this paper is of $G_2$ webs on surfaces, with particular focus on the case when $q$ is a root of unity. Two webs on a surface can be multiplied by stacking web diagrams on the surface, yielding an algebra structure. These skein algebras of surfaces are rich algebras related to both quantum groups and character varieties. Skein algebras are generally non-commutative, and their centers depend on the parameter $q.$ The main goal of this paper is to use the skein relations of Kuperberg to find central elements in the $G_2$ skein algebras.

\subsection{Power sum elements}

There is substantial literature on the subject of the $SL_2$ case of Kauffman bracket skein algebras of surfaces. In the root of unity case, surprising central elements were discovered in the work of Bonahon-Wong \cite{BW16}. These central elements arise from a topological operation of threading of certain polynomials along knots. The polynomials in question are the Chebyshev polynomials $T_n \in \mathbb{Z}[x]$ which satisfy the following fundamental property regarding the trace of a power of a matrix $A \in SL_2:$

\begin{equation*}
T_n(\text{tr}(A))=\text{tr}(A^n).
\end{equation*}

Since the trace of a power of a matrix is the power sum of the eigenvalues of the matrix, the polynomials $T_n$ can be referred to as power sum polynomials. These central elements obtained by threading power sum polynomials along knots may be referred to as power sum elements in the skein algebra. We are interested in finding analogous elements for the case of $G_2.$

The Chebyshev elements at roots of unity play a fundamental role in studying the representation theory of $SL_2$ skein algebras \cite{BW16,FKBL19,GJS19}. For closed surfaces, the Chebyshev elements generate the whole center of the skein algebra \cite{FKBL19}. We expect that the power sum elements found in this paper will play a similar role for the case of $G_2.$

Power sum elements have also been studied in the setting of a generic parameter $q$, and they satisfy product-to-sum formulas which allow for an explicit presentation of the skein algebra of the torus. They have been studied for each of the settings of the $SL_2$, HOMFLYPT, and Kauffman skein theories \cite{FG00,MS17,MPS23}. For the $SL_2$ case, Chebyshev elements are also involved in the construction of positive bases of skein algebras of surfaces, a construction which is important in the program of categorification of surface skein algebras and also is related to the theory of cluster algebras \cite{Thur14,Queff22,MQ23}.

\subsection{Main results}

We study the $G_2$ skein algebra, $\mathcal{S}_q^{G_2}(\Sigma),$ of a surface $\Sigma$ using the webs of Kuperberg. For our ring of coefficients, we use a commutative integral domain $\mathcal{R}$ containing a distinguished invertible element $q \in \mathcal{R}$. Due to denominators appearing in the defining skein relations, we further assume that $[12]^{-1} \in \mathcal{R},$ where the quantum integer $[k]$ denotes the Laurent polynomial defined by $[k]=\frac{q^{k}-q^{-k}}{q-q^{-1}}.$

In Section \ref{Pn,Qn}, we construct 2-variable polynomials $P_n(x,y)$ and $Q_n(x,y)$ in $\mathcal{R}[x,y]$ that are related to taking traces of powers of matrices in the two fundamental matrix representations of the Lie group $G_2$. We hence call elements obtained by threading $P_n$ or $Q_n$ along knots \textit{power sum elements} in the skein algebra. These elements satisfy a transparency property described in Section \ref{transparency section} and the polynomials $P_n$ and $Q_n$ provide examples of so-called \textit{transparent polynomials}.

\begin{theorem}(Theorem \ref{transparent})
If $q^{2n}=1$ then the element obtained by threading the polynomial $P_n$ or the polynomial $Q_n$ along a knot diagram on a surface $\Sigma$ is a central element of the $G_2$ skein algebra of the surface, $\mathcal{S}_q^{G_2}(\Sigma).$
\end{theorem}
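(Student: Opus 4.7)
The plan is to use the standard equivalence between centrality and a local transparency condition, then reduce to a computation in the skein modules of the annulus and the twice-marked annulus developed earlier in the paper. First I would note that the threaded element $P_n(K)$ (or $Q_n(K)$) commutes with every web on $\Sigma$ if and only if any other web can be isotoped across the knot $K$ without changing its class in $\mathcal{S}_q^{G_2}(\Sigma)$. Since webs generate the skein algebra and any such isotopy decomposes into elementary moves pulling a single strand past $K$, it suffices to verify the following local property: in a tubular annular neighborhood of $K$, when the threaded $P_n$ or $Q_n$ is paired with a transverse strand of one of the two fundamental-representation colors of $G_2$, the over-crossing and under-crossing expansions agree in the skein module. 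This is exactly the transparency property introduced in Section \ref{transparency section}.

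The substance of the argument is then to verify transparency in the twice-marked annulus when $q^{2n}=1$. Using the basis of the annular skein module built up in the paper, one expands the threading of $P_n$ (respectively $Q_n$) and then computes the difference between pulling a transverse strand in front of versus behind the threaded knot. This difference lies in the twice-marked annulus skein module and can be written as an explicit linear combination of basis elements whose coefficients are expressions in $q$. I expect $P_n$ and $Q_n$ to be chosen precisely so that these coefficients all factor through quantum integers of the form $[kn]$, each of which vanishes when $q$ is a $2n$-th root of unity. In total this requires four local transparency verifications: each of the two polynomials $P_n, Q_n$ against each of the two fundamental strand colors.

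The main obstacle is the combinatorial complexity of the $G_2$ crossing expansions: the $R$-matrix acting on tensor products of the two fundamental representations of $G_2$ is far more elaborate than in the Kauffman bracket setting, so pulling a strand past the threaded knot does not reduce to a two-term sum but to a longer expansion of webs. The challenge is to organize this expansion against the basis of the twice-marked annulus skein module so that the cancellation at $q^{2n}=1$ is manifest. This is precisely where the recursive structure of $P_n$ and $Q_n$ defined in Section \ref{Pn,Qn} should pay off, and verifying the cancellation amounts to a careful product-to-sum style computation in the annular skein module. Once transparency is established locally, the reduction in the first paragraph promotes it to the desired centrality statement for any surface $\Sigma$.
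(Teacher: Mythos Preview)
Your reduction from centrality to the local transparency condition in $\mathcal{A}_{1,1}$ is correct and matches the paper. One small correction: you do not need four verifications. The paper observes (in the lemma immediately following Definition~\ref{transparent definition}) that the bigon relation lets one rewrite a short double-strand as two single-strands, so transparency against a single-strand already implies transparency against a double-strand. Only the two checks for $P_n$ and $Q_n$ against a single transverse strand are needed.

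The substantive gap is in your second and third paragraphs. You propose to expand the over- and under-crossings directly in the basis of $\mathcal{A}_{1,1}$ and then hope that all coefficients of the difference factor as quantum integers $[kn]$. You correctly identify this as combinatorially forbidding, and you do not carry it out; as written this is a plan, not a proof. The paper avoids this computation entirely by a structural device you do not mention: the injective algebra homomorphisms $F^\star, F_\star : E' = \mathcal{R}[\lambda_1+\lambda_2,(\lambda_1\lambda_2)^{\pm 1}] \to \mathcal{A}_{1,1}$. These satisfy $F^\star(\boldsymbol{x}^{(1)}) = x^\star$ and $F^\star(\boldsymbol{y}^{(1)}) = y^\star + \tfrac{1}{[2]^2}f$ (and the analogous $F_\star$ formulas). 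The error term $\tfrac{1}{[2]^2}f$ forces an extra step, Proposition~\ref{transparent alternative}, showing that $S$ is transparent iff $S(x^\star,\overline{y}) = S(x_\star,\underline{y})$ with the shifted variables $\overline{y},\underline{y}$; you do not address this. Once that is in place, the power-sum identity $P_n(\boldsymbol{x}^{(1)},\boldsymbol{y}^{(1)}) = \boldsymbol{x}^{(n)}$ turns the transparency check into the statement $F^\star(\boldsymbol{x}^{(n)}) = F_\star(\boldsymbol{x}^{(n)})$. Since $F^\star$ and $F_\star$ agree up to a factor $q^{2k}$ on $d_1$-homogeneous elements of degree $k$, and every summand of $\boldsymbol{x}^{(n)}$ has degree a multiple of $n$, the condition $q^{2n}=1$ makes the two sides equal term by term. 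The same works for $Q_n$ via $\boldsymbol{y}^{(n)}$. This is where the recursive definition of $P_n,Q_n$ actually enters: not through a product-to-sum miracle in $\mathcal{A}_{1,1}$, but through the power-sum property in $E'$.
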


We also obtain a result addressing the uniqueness of those polynomials which yield central elements by threadings.

\begin{theorem}(Theorem \ref{uniqueness})
Suppose that $q^2$ is a primitive root of unity of order $n$ coprime to $3.$ Then any transparent polynomial $S(x,y)$ is generated by $P_n$ and $Q_n.$ In other words, $S(x,y)$ must be an element of the polynomial ring $\mathcal{R}[P_n,Q_n].$
\end{theorem}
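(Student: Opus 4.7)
The plan is to convert the transparency condition on $S(x,y)$ into an explicit algebraic constraint and solve it via invariant theory. The annulus skein algebra $\mathcal{S}_q^{G_2}(\text{Ann})$ should be a polynomial ring in which $x$ and $y$ are the two generators coming from the fundamental representations of $G_2$. The elements $P_n, Q_n$ from Section \ref{Pn,Qn} lie in this ring, and the transparency property is presumably characterized, via the twice-marked annulus analysis promised in the abstract, by a finite system of commutation relations: a core curve decorated by $S$ must commute past each of the two elementary strand-types passing transversally through the annulus.

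Once this local characterization is in place, at $q^{2n}=1$ the commutation relations should be recast as an invariance condition for $S$ under a finite group action on the weight lattice of $G_2$. Specifically, I would expect transparency to correspond to the condition that $S$, viewed as a Weyl-invariant element of the group algebra of the weight lattice via the character map, is further invariant under translations by $n$ times the root lattice, i.e.\ invariant under the extended affine Weyl group at level $n$. This exhibits the ring $\mathcal{T}$ of transparent polynomials as a subring of Weyl-symmetric functions cut out by an additional finite translation invariance.

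The hypothesis $\gcd(n,3)=1$ is critical here: the weight lattice of $G_2$ contains the root lattice as a sublattice of index $3$, so $\gcd(n,3)=1$ guarantees that multiplication by $n$ behaves uniformly on the two lattices and that the action of the Weyl group on the $n$-torsion of the maximal torus avoids degenerate stabilizers. With this, a Chevalley--Shephard--Todd-type argument for the $G_2$ Weyl group, a real reflection group with fundamental degrees $2$ and $6$, should imply that the invariant subring is freely generated by two elements of the expected degrees. Since $P_n$ and $Q_n$ are themselves transparent by the previous theorem and are algebraically independent with the correct degrees, they must be these generators, giving $\mathcal{T} = \mathcal{R}[P_n, Q_n]$.

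The main obstacle will be the reduction to a local condition in the first step. Transparency is \emph{a priori} a global hypothesis asserting centrality on every surface $\Sigma$, and distilling this down to a finite algebraic criterion on the annulus requires a careful cut-and-paste argument using the behavior of skein modules under gluing. The explicit analysis of the twice-marked annulus developed in the body of the paper should supply the needed universal test object: any strand passing through an annular neighborhood of the knot can be locally modeled there, so that global commutativity follows from local commutativity with each of the two fundamental strand-types. Once this local criterion is extracted, the remaining invariant-theoretic identification should proceed by a direct degree comparison, with the coprimality hypothesis ensuring no unexpected invariants appear.
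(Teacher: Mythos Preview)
Your proposal has a genuine gap in the invariant-theoretic reformulation. First, a factual error: for $G_2$ the weight lattice \emph{equals} the root lattice (the group is simply connected with trivial center), so the index-$3$ explanation for the coprimality hypothesis cannot be right. More importantly, the transparency condition does not unwind to extended affine Weyl group invariance. If you trace through the maps $F^\star,F_\star$ of the paper, transparency of $S$ is equivalent to the much simpler statement that the image $S(\boldsymbol{x},\boldsymbol{y})\in E=\mathcal{R}[\lambda_1^{\pm1},\lambda_2^{\pm1}]$ is concentrated in total degrees divisible by $n$, i.e.\ invariant under the diagonal $\mathbb{Z}/n$ action $(\lambda_1,\lambda_2)\mapsto(\zeta\lambda_1,\zeta\lambda_2)$. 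This is a single one-parameter scaling, not a lattice of translations, and Chevalley--Shephard--Todd does not apply to it in any obvious way: the invariant subring of $E$ under this action is enormous, and the content of the theorem is precisely that its intersection with $\mathcal{R}[\boldsymbol{x},\boldsymbol{y}]$ is no larger than $\mathcal{R}[\boldsymbol{x}^{(n)},\boldsymbol{y}^{(n)}]$.

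The paper's proof avoids invariant theory entirely and is elementary. One expands $S$ in the basis $\{P_iQ_j\}$, pushes the transparency identity through $F_\star$ (which is injective) to obtain an equation in $E$, and then reads off the coefficients of the two leading monomials $\lambda_1^{s+2t}\lambda_2^{s+t}$ and $\lambda_1^{s+2t}\lambda_2^{t}$ of the top term $P_sQ_t$. These force $q^{4s+6t}=1$ and $q^{2s+6t}=1$, hence $n\mid s$ and $n\mid 3t$; the hypothesis $\gcd(n,3)=1$ enters exactly here to conclude $n\mid t$. Then $P_sQ_t\in\mathcal{R}[P_n,Q_n]$ by the composition law (Corollary~\ref{power sum corollary}), one subtracts it off, and inducts on bidegree. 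The role of $3$ is thus an artifact of the exponents appearing in the weights of the $14$-dimensional representation, not of any lattice index.
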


Along the way, we study the confluence theory of the $G_2$ skein relations (Theorem \ref{basis}) to describe bases of $G_2$ skein modules of surfaces. This had already been studied in \cite{Kup96} and \cite{SW07} using a smaller set of skein relations than the set that we use. We have incorporated the relations from the more recent works of \cite{SY17,BW21}. See Remark \ref{remark skein relations} comparing the two potential sets of skein relations.

The computations involved in establishing our main theorems take place inside of an algebra associated to the annulus $\mathcal{A}$, and an algebra associated to the twice-marked annulus $\mathcal{A}_{1,1}.$ These computations are inspired by the earlier work of L\^{e} in the $SL_2$ setting \cite{Le15}. Using our constructed bases, we show that $\mathcal{A} \cong \mathcal{R}[x,y]$ and that $\mathcal{A}_{1,1}$ is a commutative algebra, obtaining an explicit presentation for it in Corollary \ref{presentation A1,1}. These are essential ingredients for us in connecting the theory of commutative polynomials to the skein theory of $G_2$ webs.

A similar approach in constructing transparent polynomials using the explicit skein relations for the setting of $SL_d$ webs was carried out by Bonahon and the third author in \cite{BH23}. Informally, the strategy common between these papers is to attempt to match up certain elements in the commutative ring $\mathcal{R}[\lambda_1^{\pm 1},\lambda_2^{\pm 1}]$ with certain diagrammatic elements in $\mathcal{A}_{1,1}.$ In the $SL_d$ case, this strategy goes exactly as planned. When the analogous approach was tried in this project for $G_2$ webs, it produced a surprising error term. In Section \ref{error term} we discuss this error term and explain how to navigate around it.

The approach we use in this paper is an elementary approach using the explicit skein relations of Kuperberg. It would be quite interesting to extend this work to find a skein theoretic proof that the threading operation respects the Kuperberg web relations and describes an algebra homomorphism $S_1^{G_2}(\Sigma) \rightarrow S_q^{G_2}(\Sigma)$ when $q$ is an appropriate root of unity. There is an alternate approach to establishing the existence of these central elements, using the theory of quantum groups. It involves the Frobenius map of Lusztig at roots of unity; see \cite{GJS19}. In \cite{Kup96} it was shown that the $G_2$ webs and skein relations faithfully describe a full subcategory of the category of representations associated to the quantum group $G_2$ when the parameter $q$ is a generic parameter. It is predicted that the same still holds when $q$ is specialized to most roots of unity (see \cite{Mor11} and \cite{BW21}). Such a result should provide an avenue for the translation of results from the algebraic setting of quantum groups at roots of unity to the skein theoretic setting of Kuperberg webs. We expect that elements obtained by threading our polynomials $P_n$ and $Q_n$ along knots form a generating set for the image of the Frobenius map described in \cite{GJS19}.

\subsection{Acknowledgments}
This work took place during the SURIEM REU program at Michigan State University, hosted by the MSU Lymann Briggs College during Summer 2023. We are grateful for funding for this project provided by the NSA grant \# H98230-23-1-0005. The members of our project benefited from a synergistic environment resulting from a concurrently running undergraduate Summer Topology Program at MSU. We thank the organizers and participants of that program for stimulating conversations and for inviting us to daily tea times, funded by an NSF RTG grant DMS-2135960. We would also like to thank F. Bonahon for helpful conversations, as the idea for this project came out of a joint project between him and the third author.

\section{\texorpdfstring{$G_2$}{G2} skein relations and reduction rules}

In this section, we give the defining skein relations for $G_2$ skein algebras of surfaces. Let $\Sigma$ be an oriented surface of finite type. Let $\mathcal{R}$ denote a commutative integral domain containing an invertible element $q \in \mathcal{R}$ such that $[12]^{-1} \in \mathcal{R},$ where the quantum integer $[k]$ denotes the Laurent polynomial $[k]=\frac{q^{k}-q^{-k}}{q-q^{-1}}.$

\subsection{\texorpdfstring{$G_2$}{G2} skein algebras of surfaces}

An abstract $G_2$ web is a trivalent graph where each edge is either a single-strand or double-strand:

\begin{equation*}
\raisebox{-6pt}{\scalebox{1.5}{\xusebox{2--singleVline}}} \hspace{25pt} \raisebox{-6pt}{\scalebox{1.5}{\xusebox{2--doubleVline}}}
\end{equation*}

and each vertex is one of the following two types. We also allow vertexless loops of either strand type. The graph need not be connected.

\begin{equation*}
\scalebox{2}{\xusebox{3--caltrop}} \hspace{7pt} \scalebox{2}{\xusebox{3--doubleCaltrop}}
\end{equation*}

We will study $G_2$ webs on surfaces subject to certain skein relations of Kuperberg. Linear combinations of webs provide the structure of an $\mathcal{R}\text{-module}$ called a skein module, and it is equipped with a natural algebra structure. We now provide the necessary definitions.

\begin{definition}
A \textit{web} $W$ on an oriented surface $\Sigma$ is a diagram on $\Sigma$ which is an embedding of a $G_2$ web except for transverse double-points which are labeled as overcrossings or undercrossings. Two webs $W_1$ and $W_2$ are isotopic on $\Sigma$ if there is an isotopy of $\Sigma$ taking $W_1$ to $W_2.$
\end{definition}

\begin{definition}
Let $\Sigma$ be a smooth oriented surface of finite type. We define the $G_2$ \textit{skein module} of $\Sigma$, denoted by $\mathcal{S}_q^{G_2}(\Sigma)$ to be the $\mathcal{R}\text{-module}$ spanned by isotopy classes of webs on $\Sigma$ subject to the skein relations (\ref{1crossing})-(\ref{pentagon}) listed later in this section.
\end{definition}

We now describe the algebra structure for the $G_2$ \textit{skein algebra} of a surface $\Sigma,$ also denoted by $\mathcal{S}_q^{G_2}(\Sigma).$

\begin{definition}
The module $\mathcal{S}_q^{G_2}(\Sigma)$ has an algebra structure arising from the bilinear extension of the following product of webs. If $W_1$ and $W_2$ are webs, then the product $W_1 \cdot W_2$ is obtained by isotoping the diagrams $W_1$ and $W_2$ on $\Sigma$ so that they only intersect at transverse double points and then taking the union $W_1 \cup W_2.$ At each intersection of $W_1$ with $W_2$, the crossing data is assigned so that $W_1$ is drawn above $W_2$ using over-crossings.
\end{definition}

The product operation on webs is well-defined with respect to isotopy because the defining skein relations imply the following types of Reidemeister moves, which hold for all strand types and vertex types:

\begin{align*}
\begin{tikzpicture}[baseline=2ex]
\draw[thick, white, double=black, looseness=1.5] (0,1) to[out=-90, in=-90] (-.5,.5);
\draw[thick, white, double=black, looseness=1.5] (-.5,.5) to[out=90, in=90] (0,0);
\end{tikzpicture}&=\begin{tikzpicture}[baseline=2ex]
\draw[thick, white, double=black, looseness=1.5] (0,0) to[out=90, in=90] (.5,.5);
\draw[thick, white, double=black, looseness=1.5] (.5,.5) to[out=-90, in=-90] (0,1);
\end{tikzpicture} & \begin{tikzpicture}[baseline=2ex]
\draw[thick, white, double=black] (0,0) to[out=45, in=-90] (.5,.5) to[out=90, in=-45] (0,1);
\draw[thick, white, double=black] (.5,0) to[out=135,in=-90] (0,.5) to[out=90, in=-135] (.5,1);
\end{tikzpicture}&=
\begin{tikzpicture}[baseline=2ex]
\draw[thick, white, double=black] (0,0) to[out=75, in=-90] (.125,.5) to[out=90, in=-75] (0,1);
\draw[thick, white, double=black] (.5,0) to[out=105,in=-90] (.375,.5) to[out=90, in=-105] (.5,1);
\end{tikzpicture}\\
\begin{tikzpicture}[baseline=2ex]
\draw[thick, white, double=black] (0,0) -- (.5,1);
\draw[thick, white, double=black] (.5,0)--(0,1);
\draw[thick, white, double=black] (-.25,.5) to[out=30, in=180] (.25,.75) to[out=0, in=150] (.75,.5);
\end{tikzpicture}&=
\begin{tikzpicture}[baseline=2ex]
\draw[thick, white, double=black] (0,0) -- (.5,1);
\draw[thick, white, double=black] (.5,0)--(0,1);
\draw[thick, white, double=black] (-.25,.5) to[out=-30, in=180] (.25,.25) to[out=0, in=210] (.75,.5);
\end{tikzpicture}&\begin{tikzpicture}[baseline=2ex]
\draw[thick] (0,0) -- (.25,.5);
\draw[thick] (.5,0)--(.25,.5);
\draw[thick] (.25,.5)--(.25,1);
\draw[thick, white, double=black] (-.25,.5) to[out=30, in=180] (.25,.75) to[out=0, in=150] (.75,.5);
\end{tikzpicture}&=
\begin{tikzpicture}[baseline=2ex]
\draw[thick] (0,0) -- (.25,.5);
\draw[thick] (.5,0)--(.25,.5);
\draw[thick] (.25,.5)--(.25,1);
\draw[thick, white, double=black] (-.25,.5) to[out=-30, in=180] (.25,.25) to[out=0, in=210] (.75,.5);
\end{tikzpicture}
\end{align*}

In fact, there is an equivalent definition of the skein algebra of a surface, using the language of thickened surfaces and ribbon graphs, which satisfy the same Reidemester moves as above. More generally, one may define skein modules spanned by ribbon graphs in oriented 3-manifolds. In this paper, we focus on the more 2-dimensional perspective of webs on surfaces since that is the setting in which our computations take place.

The following skein relations serve two purposes. The first is that they are the defining relations for the skein modules. The second is that each relation can be interpreted as a reduction rule which replaces a web by a linear combination of simpler webs. We will soon explain how this leads to a basis for skein modules of surfaces, by using the method of confluence theory described in \cite{SW07}.

We use the following rules to remove crossings and internal double-edges.

\begin{align}
\xusebox{posCrossing}&=\frac{1}{[2]}\left(q^{3} \xusebox{4--Vlines} +q^{-3} \xusebox{4--Hlines}+q \xusebox{4--Hstick}+q^{-1} \xusebox{4--Vstick} \right) \label{1crossing}\\
\xusebox{posCrossDoubleTop}&= \frac{1}{[2]}\left(q^3 \xusebox{4--TRBLHstick} +q^{-3} \xusebox{4--TRBLVstick} + \frac{1}{[2]} \xusebox{4--2doubleSquare}   \right)\\
\xusebox{negCrossDoubleBot}&= \frac{1}{[2]}\left(q^{-3} \xusebox{4--TRBLHstick} +q^{3} \xusebox{4--TRBLVstick} + \frac{1}{[2]} \xusebox{4--2doubleSquare}   \right)\\
\xusebox{posCrossDoubleDouble}&= q^6 \xusebox{4--doubleVlines}+q^{-6} \xusebox{4--doubleHlines} + \frac{2}{[2]^2} \xusebox{4--4doubleSquare} + \frac{q^3}{[2]^3} \xusebox{4--4double2HSquare} + \frac{q^{-3}}{[2]^3} \xusebox{4--4double2VSquare}\\
\xusebox{DVstick}&= -\xusebox{4--Vlines} + \frac{[4][6]}{[2][12]} \xusebox{4--Hlines} + \xusebox{4--Hstick} + \frac{1}{[3]} \xusebox{4--Vstick}\label{double edge removal}
\end{align}

We call relation (\ref{double edge removal}) the `double-edge removal' relation. Once we have applied the crossing removals and double-edge removals to a web diagram on a surface, we are left with webs which might have some polygonal faces bounded by single-edges. The following defining reduction rules are used to remove small faces.

\begin{align}
\xusebox{unknot}&=\frac{[2][7][12]}{[4][6]} \text{ , } \xusebox{doubleunknot} = \frac{[7][8][15]}{[3][4][5]}\\
\xusebox{lollipop}&= 0 \text{ , } \hspace{1cm} \xusebox{doubleStrandLollipop}= 0\\
\xusebox{doubleLollipop}&= -\frac{[3][8]}{[4]} \hspace{1mm} \xusebox{2--singleVline} \text{ , } \hspace{1cm} \xusebox{doubleStrandDoublePop} = -[2]^2 \xusebox{2--doubleVline} \text{ , } \hspace{1cm} \xusebox{diffStrandDoublePop} = 0 \label{2gon}\\
\xusebox{spikyTriangle}&= \frac{[6]}{[2]} \xusebox{3--caltrop} \text{ , } \hspace{1cm} \xusebox{doubleEdgeTri} = -[3]\xusebox{3--doubleCaltrop} \text{ , } \hspace{1cm} \xusebox{twoDoubleEdgeTri} = 0 \label{3gon}\\
\xusebox{spikySquare}&= [3]\left(\xusebox{4--Vlines} + \xusebox{4--Hlines}\right) -\frac{[4]}{[2]}\left(\xusebox{4--Hstick} + \xusebox{4--Vstick}\right) \\
\xusebox{doubleEdgeSquare}&= \xusebox{4--TdoubleRopen} + \xusebox{4--LdoubleBopen} \\
\xusebox{spikyPentagon} &= \left(\xusebox{5--BBpent}+\xusebox{5--BRpent}+\xusebox{5--TRpent}+\xusebox{5--TLpent}+\xusebox{5--BLpent}\right) \label{pentagon}\\
     &\hspace{1.6cm} - \left(\xusebox{5--TTvert}+\xusebox{5--TLvert}+\xusebox{5--BLvert}+\xusebox{5--BRvert}+\xusebox{5--TRvert}\right) \nonumber
\end{align}

All of these defining relations are local, in the sense that they hold in any disk on the surface. Some of the defining relations follow from the other defining relations. See \cite{BW21,SY17} for a smaller set of defining relations. We have listed the larger set because these are needed to form a set of relations which are confluent, in the following sense.

We can view each relation (\ref{1crossing})-(\ref{pentagon}) as a reduction rule for rewriting a web as a linear combination of webs which are simpler when considering the triple of numbers of (crossings, vertices, connected components), considered lexigocraphically. This allows us to find a spanning set of webs for $\mathcal{S}_q^{G_2}(\Sigma)$ consisting of the set of webs which cannot be reduced further. In order for this spanning set to be a basis, \cite{SW07} explains that we must check that every overlap of the reduction rules can be resolved consistently. To this end, in addition to the defining relations, we need the following reduction rules. Even though these apply in annular regions instead of disk-shaped regions, they follow from the previous local rules. In the following pictures, the dot represents a region on the surface which may be non-contractible.

\begin{align}
\xusebox{Fake4Gon} &=  \xusebox{Hole2Loop}  -[2]^2 \xusebox{c_HoleDoubLoop} + \frac{[8]}{[4]} \xusebox{c_HoleLoop} - [7] \xusebox{Hole} \label{fake square}\\
\xusebox{Fake5Gon} &= \xusebox{Hole2Loop1} - \frac{[6]}{[2][3]} \xusebox{HoleLoop1} \label{fake pentagon}
\end{align}

The relation (\ref{fake square}) is derived by considering an overlap of the middle relation of (\ref{2gon}) and the relation (\ref{double edge removal}). The relation (\ref{fake pentagon}) is derived by considering an overlap of the last relation of (\ref{3gon}) and of (\ref{double edge removal}). We refer to the region bounded by the web in the left side of (\ref{fake square}) as a \textit{self-folded square,} and the region bounded by the web in the left side of (\ref{fake pentagon}) a \textit{self-folded pentagon.}

By considering an overlap of the relation (\ref{fake pentagon}) with itself we find we need the following equivalence rule (\ref{loop switch}). We do not consider it as a reduction rule, but we use it as an equivalence relation placed on the set of irreducible diagrams.

\begin{align}
\xusebox{singleDouble} &= \xusebox{doubleSingle} \label{loop switch}    
\end{align}

\subsection{Bases of skein modules}

When we apply a reduction rule, we replace a web with a linear combination of simpler descendant diagrams. Our relations are confluent if, whenever two reduction rules are applicable to a web, the element obtained by applying the first reduction rule and the element obtained by applying the second can both be reduced to the same element, up to surface isotopy and moves of the form (\ref{loop switch}). A set of reduction rules is terminal if no diagram has an infinite chain of descendants. If the defining set of relations for a module consists of a set of reduction rules which is terminal and confluent, then the irreducible diagrams form a linearly independent spanning set for the module. We encourage the reader unfamiliar with confluence theory (also called rewriting theory or the Diamond Lemma) to consult \cite{SW07} for the description of this procedure in the context of skein theory. Other similar examples of the process of checking the confluence of skein relations can be found in \cite{Le18,FS22,Hig23}.

\begin{theorem}\label{basis}
The above defining skein relations (\ref{1crossing})-(\ref{fake pentagon}) are terminal and confluent. Thus, a basis for $\mathcal{S}_q^{G_2}(\Sigma)$ consists of irreducible webs. These irreducible webs are characterized as having no crossings, no internal double-edges and no $n\text{-gon}$ (contractible) faces for $n \leq 5.$ Any two irreducible webs representing the same basis element are related by an isotopy of the surface along with a sequence of loop switch moves (\ref{loop switch}).
\end{theorem}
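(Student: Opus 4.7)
The plan is to apply the Diamond Lemma for skein-theoretic rewriting systems as formulated by Sikora--Westbury \cite{SW07}: once I show the rewriting system is both terminating and locally confluent (modulo the loop-switch equivalence (\ref{loop switch})), the set of irreducible diagrams will automatically form a basis. For termination, I would equip each web diagram with the lexicographic triple $(c,v,k)$ of (crossings, trivalent vertices, connected components), and inspect each of relations (\ref{1crossing})--(\ref{fake pentagon}) in turn. Every monomial on the right of a crossing-removal relation drops $c$ by one, double-edge removal (\ref{double edge removal}) preserves $c$ but decreases $v$ by two, each small-face reduction (\ref{2gon})--(\ref{pentagon}) strictly decreases $v$, and the fake-face rules (\ref{fake square})--(\ref{fake pentagon}) decrease $v$ while replacing the non-contractible self-folded face with diagrams whose interior handle is now exposed. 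Since the lexicographic order on triples of non-negative integers is well-founded, no infinite descending chain can exist.

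For local confluence, I would systematically enumerate all critical pairs: pairs of reduction rules whose supporting disks can be placed on $\Sigma$ in a way that makes both rules simultaneously applicable and whose supports overlap non-trivially. Disjoint supports are trivially confluent because the two rules commute, so the work lies in overlaps that share at least an edge. These split into crossing--crossing overlaps (whose confluence is equivalent to the Reidemeister-type moves already displayed in the paper), crossing--face overlaps, and face--face overlaps. Following \cite{Kup96,SW07,SY17,BW21}, most of these overlaps are already known to resolve by direct skein calculation. The two new phenomena that come from our enlarged set of relations, already flagged by the authors, are: overlapping the middle digon rule of (\ref{2gon}) with (\ref{double edge removal}) in an annular neighborhood of a handle, which forces rule (\ref{fake square}); and overlapping the last triangle rule of (\ref{3gon}) with (\ref{double edge removal}), which forces rule (\ref{fake pentagon}). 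I would verify these derivations by substituting the relations and simplifying. The crucial final case is overlapping (\ref{fake pentagon}) with itself: this produces two irreducible diagrams that differ only by swapping an inner single-strand loop for an inner double-strand loop, which is exactly the loop-switch move (\ref{loop switch}); retaining this as an equivalence rather than a reduction is what makes the diamond close.

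Once termination and local confluence modulo (\ref{loop switch}) are established, the Diamond Lemma gives every web a normal form, unique up to surface isotopy and loop switches, and that the normal forms form a basis of $\mathcal{S}_q^{G_2}(\Sigma)$. The characterization of irreducible webs is immediate by negating the applicability conditions of the reduction rules: irreducibility forbids crossings (by the crossing rules), internal double edges (by (\ref{double edge removal})), and any contractible $n$-gon face with $n \leq 5$ (by (\ref{2gon})--(\ref{pentagon})). The self-folded square and pentagon rules handle the non-contractible analogues and impose no further conditions on irreducibility. The main obstacle is the combinatorial bookkeeping in the confluence check: the number of geometric configurations in which two reduction supports can overlap is large, and each must be resolved by an explicit sequence of skein moves. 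The conceptual content is modest---this is essentially an expanded version of the verification in \cite{SW07}---but the care needed to ensure that the additional rules (\ref{double edge removal}), (\ref{fake square}), and (\ref{fake pentagon}) interact confluently with the pentagon rule (\ref{pentagon}) is the principal source of work.
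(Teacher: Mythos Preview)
Your proposal is correct and follows essentially the same approach as the paper. The paper's own proof is in fact even terser than yours: it simply observes that every rule strictly decreases the lexicographic triple (crossings, vertices, connected components) to obtain termination, and then defers the entire local-confluence verification to the methods of \cite{SW07}, with the overlaps producing (\ref{fake square}), (\ref{fake pentagon}), and (\ref{loop switch}) already having been flagged in the text preceding the theorem.
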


\begin{proof}
Note that each reduction rule (\ref{1crossing})-(\ref{fake pentagon}) strictly decreases the numbers of the following, in lexicographic order (crossings, vertices, connected components). Thus, the set of reduction rules is terminal. Then the proof follows directly from the methods of \cite{SW07}, by checking that the relations are locally confluent for all non-trivial overlaps of the reduction rules.
\end{proof}

\begin{remark}\label{remark skein relations}
If one considers only webs with single-strands and uses relations only concerning single-strands, then the relations appearing in \cite{Kup96} are complete, and from \cite{SW07} a basis of that version of the skein module can be deduced by leaving out the double-edge removal relation. However, we need the extra defining reduction rules along with (\ref{fake square}) and (\ref{loop switch}) to obtain a confluent presentation of the version of the skein module that uses both kinds of strands.
\end{remark}

We will also consider skein modules of surfaces with marked points on the boundary. In this paper, however, the only marked surface we explicitly deal with is the twice-marked annulus $\mathcal{A}_{1,1}$ as described in Section \ref{marked annulus}.

\begin{definition}
Let $M=p_1,...,p_k \in \partial \Sigma$ be a finite set of marked points on the boundary of $\Sigma.$ The skein module of the marked surface $\mathcal{S}_q^{G_2}(\Sigma,M)$ is the $\mathcal{R}\text{-module}$ spanned by webs $W$ having single-strand endpoints such that $\partial W=M.$ Webs are considered up to isotopies of $\Sigma$ which fix the boundary components.
\end{definition}

Our Theorem \ref{basis} also applies when $(\Sigma, M)$ is a surface with marked boundary. Following Kuperberg \cite{Kup94}, we can identify irreducible webs by analyzing contributions of web faces to the Euler characteristic of the surface.

\begin{definition}\label{Euler measure}(Euler measure)
Let $W$ be a crossingless web on a surface $\Sigma$ of finite type. We call a connected component of $\Sigma \setminus W$ a \textit{face} of $W$. Each face $U$ of $W$ is bounded by some number of edges and vertices. The edges could be edges (or loops) of $W$ or could be boundary edges (or boundary circles) of $\Sigma,$ if $W$ has endpoints on the boundary. If every edge bounding $U$ is an edge of $W,$ then $U$ is a \textit{interior face} of $W$. If at least one edge bounding $U$ is a boundary edge of $\Sigma,$ then we call $U$ a \textit{boundary face} of $W$.

To each face $U$ of $W,$ we define its \textit{Euler measure} $\mu(U)$ to be

\begin{equation*}
\mu(U)=\frac{1}{3}V+\frac{1}{2}V_\partial - \frac{1}{2}E-E_\partial + \chi(U),   
\end{equation*}

where $V$ and $V_\partial$ are the number of internal vertices and boundary vertices of $W,$ and $E$ is the number of edges of $U$ which belong to $W$ and $E_\partial$ is the number of edges of $U$ which belong to $\partial \Sigma.$ The quantity $\chi(U)$ is the ordinary Euler characteristic of the face $U$ considered as a subsmanifold of the surface $\Sigma.$ Loops of $W$ and boundary circles of $\partial \Sigma$ do not contribute to the Euler measure.
\end{definition}

A classical fact is that the Euler measure satisfies the following fundamental property. If the faces of $W$ are labeled by $U_1,...,U_k,$ then we have

\begin{equation*}
\sum_{i=1}^k \mu(U_i)=\chi(\Sigma).
\end{equation*}

\begin{proposition}
A web $W$ on $\Sigma$ is irreducible if and only if it contains no crossings, no internal double edges, and if every internal face $U$ of $W$ satisfies $\mu(U)\leq 0.$
\end{proposition}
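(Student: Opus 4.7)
The plan is to build on Theorem \ref{basis}, which already characterizes irreducible webs as those with no crossings, no internal double edges, and no contractible $n$-gon faces for $n \leq 5$ (interpreted to include the degenerate cases $n=0$ of a disk bounded by a single loop and $n=1$ of a lollipop face). Granting this, the proposition reduces to the following purely combinatorial claim: for a crossingless web with no internal double edges, the condition ``no contractible $n$-gon face with $n \leq 5$'' is equivalent to ``$\mu(U) \leq 0$ for every internal face $U$.''

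The key simplification is to rewrite $\mu(U)$ for an internal face $U$. Since $U$ is internal, $V_\partial = E_\partial = 0$. Each boundary circle of $U$ either contains at least one vertex of $W$ -- in which case vertices and edges alternate along that circle, contributing equally to $V$ and $E$ -- or is a free loop of $W$ which by the convention stated after Definition \ref{Euler measure} contributes to neither. Summing across boundary components gives $V = E$, so
\begin{equation*}
\mu(U) = \frac{V}{3} - \frac{V}{2} + \chi(U) = \chi(U) - \frac{V}{6}.
\end{equation*}

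The argument now splits on the topology of $U$. If $U$ is a disk, then $\chi(U) = 1$ and $\mu(U) = 1 - V/6$, which is strictly positive precisely when $V \leq 5$; since a disk face with $V$ boundary vertices is by definition a contractible $V$-gon, this captures exactly the forbidden small faces. If instead $U$ is not a disk, then as a connected surface with nonempty boundary it satisfies $\chi(U) \leq 0$, so $\mu(U) \leq -V/6 \leq 0$ holds automatically. Combining the two cases, there exists an internal face with $\mu(U) > 0$ if and only if $W$ contains a contractible $n$-gon face with $n \leq 5$, and the proposition follows. The only step requiring care is the bookkeeping that gives $V = E$, particularly for boundary circles of $U$ that are free loops or for faces $U$ with multiple boundary components; once that accounting is settled, the equivalence reduces to the one-line Euler-measure calculation above.
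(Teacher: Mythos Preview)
The paper states this proposition without proof; it is presented as an immediate reformulation of the characterization in Theorem \ref{basis} once the Euler measure has been introduced. Your argument correctly supplies the missing computation: reducing to Theorem \ref{basis}, observing $V=E$ on each boundary circle of an internal face so that $\mu(U)=\chi(U)-V/6$, and then splitting on whether $U$ is a disk. This is exactly the intended reasoning, and your bookkeeping for loops and for multi-component boundaries is handled correctly.

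One small implicit assumption worth flagging: when you assert that a non-disk face $U$ has $\chi(U)\le 0$ because it is ``a connected surface with nonempty boundary,'' you are using that $U$ actually has boundary, i.e., that $W$ meets the connected component of $\Sigma$ containing $U$. If $W$ is empty on some closed component of $\Sigma$ (say a sphere), that component is an internal face with $\mu=\chi>0$ even though the empty web is irreducible. This is a harmless degenerate case---the proposition is evidently meant for nonempty webs, and in the paper's applications $\Sigma$ always has boundary---but it is worth a parenthetical remark.
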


Kuperberg used the notion of Euler measure to prove the following.

\begin{proposition} (\cite{Kup94})
If $\Sigma$ is a disk or is a sphere, then $\mathcal{S}_q^{G_2}(\Sigma) \cong \mathcal{R}.$
\end{proposition}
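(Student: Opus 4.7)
The plan is to combine Theorem~\ref{basis} with a face-counting argument in the spirit of Kuperberg. By that theorem, $\mathcal{S}_q^{G_2}(\Sigma)$ is spanned by irreducible webs, so it suffices to show that the empty web is the only irreducible web on $\Sigma\in\{D^2,S^2\}$; since the empty web generates a free rank-$1$ $\mathcal{R}$-module, this will give $\mathcal{S}_q^{G_2}(\Sigma)\cong\mathcal{R}$.

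Suppose for contradiction that $W$ is a nonempty irreducible web on such a $\Sigma$. Since $\Sigma$ is simply connected, any vertexless loop of $W$ bounds an interior disk face with $V=E=0$ and $\chi=1$, giving $\mu=1>0$ and contradicting irreducibility; the same reasoning excludes lollipops. Thus $W$ is a loop-free trivalent graph with $V\geq 2$ vertices and $E=3V/2$ edges, so $\chi(W)=-V/2$. Applying additivity of Euler characteristic to a regular neighborhood of $W$ then gives
\[
\sum_U \chi(U) \;=\; \chi(\Sigma) + \tfrac{V}{2}.
\]

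Let $F_d$ denote the number of interior disk faces of $W$. Every other face has $\chi\leq 0$: interior non-disk faces (arising when components of $W$ are nested) have at least two boundary components, and on $D^2$ the exterior face contains $\partial D^2$ together with at least one inner boundary inherited from the nonempty $W$. Hence $F_d \geq \chi(\Sigma)+V/2$. On the other hand, irreducibility forces every interior disk face to have boundary length $\ell_U \geq 6$ (for a simple-cycle boundary this follows from $\mu=1-\ell/6\leq 0$; degenerate boundary walks are ruled out by the absence of lollipops and the small-polygon reductions). Summing the identity $\sum_U \ell_U=2E=3V$ then gives $3V\geq 6F_d$, i.e.~$F_d\leq V/2$. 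Combining the two bounds forces $\chi(\Sigma)\leq 0$, contradicting $\chi(D^2)=1$ and $\chi(S^2)=2$. Hence $W$ must be empty. The most delicate step is the claim $\ell_U\geq 6$ for irreducible interior disk faces bounded by non-simple walks, which I would verify by exhibiting in any such exceptional face a reducible sub-configuration (a lollipop, or an $n$-gon face with $n\leq 5$).
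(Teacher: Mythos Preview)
Your argument is correct and is in spirit the same as the paper's: both show that an irreducible nonempty web on a disk or sphere would force a face of ``positive curvature,'' contradicting the absence of $n$-gons with $n\leq 5$. The paper's version is more concise because it invokes directly the Euler measure identity $\sum_i \mu(U_i)=\chi(\Sigma)$ already recorded in Definition~\ref{Euler measure}: since $\chi(\Sigma)>0$, some face has $\mu(U)>0$, and by the preceding proposition such a face is a small polygon, contradicting irreducibility. Your vertex/edge/face count is exactly an unpacking of this identity; your bounds $F_d\geq \chi(\Sigma)+V/2$ and $6F_d\leq 3V$ are the two halves of $\sum\mu(U_i)=\chi(\Sigma)$ combined with $\mu(U)\leq 0$ for hexagons-and-larger. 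The one place you flag as delicate---disk faces whose boundary walk is not a simple cycle---is precisely what the Euler measure formalism absorbs automatically (vertices and edges are counted with the appropriate multiplicity in $\mu$), so appealing to that framework as the paper does lets you avoid the case analysis you anticipate.
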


\begin{proof}
It suffices to show that our basis of $\mathcal{S}_q^{G_2}(\Sigma)$ consists of only the empty diagram. Suppose that $W$ is a basis web for $\mathcal{S}_q^{G_2}(\Sigma).$ Then $W$ contains no internal double-edges, no crossings, and no $n\text{-gons}$ for $n \leq 5.$ Since $\Sigma$ is a sphere or is a disk, the Euler characteristic of $\Sigma$ is positive. If $W$ is non-empty, then it must bound some region of positive Euler measure. However, a region of positive Euler measure must be an $n\text{-gon}$ with $n\leq 5.$ Since $W$ has no such face, $W$ must be empty.
\end{proof}

\section{Annuli}

We now study some algebras associated to skein modules of the annulus. In this section, we study these algebras from a purely skein-theoretic perspective. Later on, we will return to these algebras to perform computations with power sum polynomials.

\subsection{The annulus and the algebra \texorpdfstring{$\mathcal{A}$}{A}}

Let $\mathbb{A}=S^1 \times [0,1]$ denote the annulus. 

\begin{definition}
Define $\mathcal{A}$ to be the $\mathcal{R}\text{-algebra}$ spanned by closed webs in $\mathbb{A}$. If $W_1,W_2 \in \mathcal{A}$ are two webs, we define their product, $W_1W_2$ to be the web obtained by isotoping $W_1$ so it is contained in $S^1 \times (1/2,1]$, isotoping $W_2$ so it is contained in $S^1 \times[0,1/2),$ and taking the union of the webs. This product agrees with that of $\mathcal{S}_q^{G_2}(\mathbb{A}).$
\end{definition}

The multiplicative unit $1 \in \mathcal{A}$ is the empty diagram. We define $x,y \in \mathcal{A}$ to be the following distinguished elements.

\begin{equation*}
x=\xusebox{x} \hspace{1cm} y=\xusebox{y}.
\end{equation*}

\begin{proposition}\label{A isomorphic}
The algebra $\mathcal{A}$ associated to the annulus is isomorphic to $\mathcal{R}[x,y]$ and has a basis of diagrams consisting of concentric loops around the core of the annulus.
\end{proposition}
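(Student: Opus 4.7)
The plan is to combine Theorem \ref{basis} with an Euler measure argument to identify the irreducible webs on $\mathbb{A}$ as concentric loops, and then match these bijectively with monomials in $\mathcal{R}[x,y]$.

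First, I would invoke Theorem \ref{basis} to obtain a basis of $\mathcal{A}$ consisting of isotopy classes of irreducible closed webs, modulo the loop-switch equivalence (\ref{loop switch}). I would then classify such irreducible webs on $\mathbb{A}$. Each connected component is either a vertexless simple loop or a trivalent graph with vertices. The lollipop relations in (\ref{2gon}) force any loop-edge based at a trivalent vertex to be zero, so trivalent components have no such self-loops. Any contractible simple loop bounds an interior disk face of positive Euler measure $\mu = 1$, so is reducible by the unknot relations. Hence every simple loop in an irreducible web is isotopic to the core of $\mathbb{A}$, as either a single- or double-strand loop.

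The main step, and expected obstacle, is ruling out trivalent components with vertices on $\mathbb{A}$. Here I would mimic the Euler measure strategy used just prior for the disk and sphere cases. Using $\sum_i \mu(U_i) = \chi(\mathbb{A}) = 0$, together with the constraint $\mu \leq 0$ on interior faces and the reducibility of every $n$-gon face with $n \leq 5$, there is very little flexibility. For any trivalent component $C$ in $\mathbb{A}$ contained in a disk, the disk version of the argument (where $\sum \mu = 1 > 0$) immediately forces a face of positive Euler measure, hence a small $n$-gon, contradicting irreducibility. For $C$ wrapping essentially around $\mathbb{A}$, I would cut along an essential cycle of $C$ to decompose $\mathbb{A}$ into sub-annuli and iterate, or pass to a local disk patch containing some vertex of $C$ and apply the disk argument there, again producing a small reducible face. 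Either way $C$ is reducible, so irreducible webs on $\mathbb{A}$ are precisely disjoint unions of concentric single- and double-strand loops, indexed up to the loop-switch relation by pairs $(a,b) \in \mathbb{N}^2$.

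Finally, I would define the algebra map $\varphi \colon \mathcal{R}[x,y] \to \mathcal{A}$ by sending the polynomial generators to the corresponding single- and double-strand core loops. The preceding step provides a bijection between the monomial basis $\{x^a y^b\}$ of $\mathcal{R}[x,y]$ and the basis of $\mathcal{A}$ by concentric loop diagrams, so $\varphi$ is an $\mathcal{R}$-linear isomorphism. To verify multiplicativity, observe that concentric loops in $\mathbb{A}$ can be isotoped to disjoint radial positions, so the stacking product of $x^a y^b$ with $x^c y^d$ is simply their disjoint union, giving $x^{a+c} y^{b+d}$; in particular $\mathcal{A}$ is commutative and $\varphi$ is an algebra isomorphism.
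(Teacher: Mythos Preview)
Your overall plan matches the paper's: invoke Theorem~\ref{basis}, use Euler measure to show irreducible webs are unions of essential loops, then identify $\mathcal{A}$ with $\mathcal{R}[x,y]$ via the obvious map. The first case (a trivalent component contained in a disk) and the final algebra-map step are fine.

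The gap is in your treatment of a trivalent component $C$ that wraps essentially around $\mathbb{A}$. Neither of your two proposed moves works. Cutting $\mathbb{A}$ along an essential cycle of $C$ does not produce closed webs on sub-annuli: it produces webs with boundary on a rectangle (or on annuli with marked points), so you are no longer in the situation you are trying to ``iterate'' back to, and the Euler measure bookkeeping changes. Passing to a small disk patch around a vertex of $C$ gives only the trivalent vertex and three edge-stubs; there is no closed interior face in that patch, so the disk argument cannot manufacture a small $n$-gon there.

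The clean argument, which you in fact set up but then abandon, is purely global. If $C$ is essential, the face $U$ of $W$ containing (say) the inner boundary circle of $\mathbb{A}$ is bounded on its other side by a cycle of edges of $C$, hence $U$ is non-simply-connected and has $\chi(U)\le 0$; since it is bounded by $k\ge 1$ edges of the trivalent graph one gets $\mu(U)=\tfrac{1}{3}k-\tfrac{1}{2}k+\chi(U)<0$. Combined with $\sum_i \mu(U_i)=\chi(\mathbb{A})=0$, this forces some other face to have $\mu>0$, contradicting irreducibility. This is exactly what the paper does in one sentence (``as $W$ must bound a non-simply connected region on the annulus, $W$ would have a region of negative Euler measure''). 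Replace your cutting/patching paragraph with this observation and the proof goes through.
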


\begin{proof}
We first describe the basis of $\mathcal{A}.$ Let $W$ be a basis web. Thus, $W$ contains no internal double-edge, and bounds no $n\text{-gon}$ for $n \leq 5.$ Consequently, $W$ bounds no region of positive Euler measure. If $W$ has a trivalent vertex, then as $W$ must bound a non-simply connected region on the annulus, $W$ would have a region of negative Euler measure. Therefore, as the Euler characteristic of $\mathbb{A}$ is zero, $W$ cannot contain a trivalent vertex. Thus, $W$ consists of concentric loops around the core of the annulus, and the description of the basis is as claimed.

Now we describe the algebra structure of $\mathcal{A}.$ Due to the equivalence rule (\ref{loop switch}), the two kinds of loops in $\mathcal{A}$ commute. Consequently, there is a well-defined algebra homomorphism $\mathcal{R}[x,y] \rightarrow \mathcal{A}$ sending $x$ to an essential single-strand loop and sending $y$ to an essential double-strand loop. This map sends the standard basis of monomials of $\mathcal{R}[x,y]$ injectively onto the basis we have just described for $\mathcal{A}.$ Thus, this algebra map is an isomorphism $\mathcal{R}[x,y] \cong \mathcal{A}.$
\end{proof}

\subsection{The twice-marked annulus and the algebra \texorpdfstring{$\mathcal{A}_{1,1}$}{A1,1}}\label{marked annulus}

Consider the annulus $\mathbb{A}=S^1\times [0,1]$ and a fixed point $p \in S^1$. Let $p_0=(p,0)$ and $p_1=(p,1)$ be two marked points on $\partial \mathbb{A}.$ We consider webs with boundary equal to $M=\{p_0,p_1\}.$

\begin{definition}
Define $\mathcal{A}_{1,1}$ to be the skein module $\mathcal{S}_q^{G_2}(\mathbb{A},\{p_0,p_1\})$ of the marked annulus spanned by webs with one single-strand endpoint at $p_0$ on the interior boundary of the annulus and one single-strand endpoint at $p_1$ on the exterior boundary of the annulus. The algebra structure for $\mathcal{A}_{1,1}$ is given by concatenating annuli outward, as described for $\mathcal{A}.$ Note that this product is not the same as a skein algebra product.
\end{definition}

\begin{remark}In the literature, $\mathcal{A}_{1,1}$ is also known as an endomorphism space in the \textit{annular representation category}, \textit{annular spider}, \textit{skein category of the annulus}, or as a corner of the \textit{tube algebra}. The product structure is to be thought of as the composition of morphisms. We will later show that $\mathcal{A}_{1,1}$ is commutative (Corollary \ref{A11 commutative}).
\end{remark}

The multiplicative unit of $\mathcal{A}_{1,1}$ is the following diagram.

\begin{equation*}
1=\xusebox{A11--id}
\end{equation*}

We identify the following distinguished elements $a,a^{-1},c,f \in \mathcal{A}_{1,1}:$

\begin{align*}
a&=\xusebox{A11--a} & a^{-1}&=\xusebox{A11--aInv}\\
c&=\xusebox{A11--c} & f&=\xusebox{A11--f}.
\end{align*}

In addition to the fundamental elements $a,a^{-1},1,c,f \in \mathcal{A}_{1,1}$ listed, we also need to provide notation for the elements $f_{i,j}\in \mathcal{A}_{1,1}$ which consist of the diagram $f$ along with $i$ concentric single-strand loops and $j$ concentric double-strand loops inside of the open annular region bounded by the diagram of $f.$

For example, the following depicts the element $f_{1,1}.$

\begin{equation*}
f_{1,1}=\xusebox{f11}.
\end{equation*}

Using these elements, we can describe a basis for $\mathcal{A}_{1,1}$ over $\mathcal{R}$ (Proposition \ref{A1,1 basis}) and later a presentation for $\mathcal{A}_{1,1}$ as an $\mathcal{R}\text{-algebra}$ (Corollary \ref{presentation A1,1}).

We first record some topological facts about diagrams in $\mathcal{A}_{1,1}.$ Recall that the mapping class group of the annulus, $\text{MCG}(\mathbb{A})$, is the group of isotopy classes of orientation preserving diffeomorphisms of $\mathbb{A}$ which fix the boundary. It is a classical fact (see \cite{FM12}) that $\text{MCG}(\mathbb{A})\cong \mathbb{Z}$ and is generated by a Dehn twist of the annulus. The annulus is equipped with a natural orientation-reversing diffeomorphism $\mathbb{A} \rightarrow \mathbb{A}$, called the mirror image, which reverses the orientation of the $S^1$ factor of $S^1 \times [0,1].$ From these facts we observe that if a web $W_1 \in \mathcal{A}_{1,1}$ is taken to a web $W_2 \in \mathcal{A}_{1,1}$ by an orientation-preserving diffeomorphism, then $W_1=a^kW_2$ for some integer power $k$ of the twist diagram $a.$ Similarly, if $W_1$ is taken to $W_2$ by an orientation-reversing diffeomorphism, then $W_1$ and the mirror image of $W_2$ are related by a power of $a.$ We use these observations to record the following.

\begin{lemma}\label{MCG}
Suppose $W_1 \in \mathcal{A}_{1,1}$ is a web and $W_2\in \mathcal{A}_{1,1}$ is one of $\{1,c^j,f_{ij}\}.$ If $W_1$ and $W_2$ are diffeomorphic then there exists some $k \in \mathbb{Z}$ such that

\begin{equation*}
W_1=a^kW_2.
\end{equation*}
Moreover, if $W_2=f_{ij},$ then we may use $k=0.$
\end{lemma}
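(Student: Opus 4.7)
The plan is to invoke the classification of self-diffeomorphisms of the annulus recalled just before the lemma, together with a direct inspection of the shapes of the standard webs $1$, $c^j$, and $f_{ij}$.

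Any diffeomorphism $\phi\colon\mathbb{A}\to\mathbb{A}$ satisfying $\phi(W_2)=W_1$ must fix each of $p_0$ and $p_1$, since these marked points sit on different boundary components of $\mathbb{A}$ and both $W_1,W_2$ have the same prescribed boundary $\{p_0,p_1\}$. By the recalled facts that $\mathrm{MCG}(\mathbb{A})\cong\mathbb{Z}$ is generated by a Dehn twist $\tau$ around the core and that the annulus admits an orientation-reversing mirror reflection $\mathrm{mir}$, such a $\phi$ is isotopic rel $\{p_0,p_1\}$ to either $\tau^k$ or to $\mathrm{mir}\circ\tau^k$ for some $k\in\mathbb{Z}$.

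I would first check, by directly inspecting the displayed pictures, that each of the standard webs $1$, $c^j$, and $f_{ij}$ is fixed as an isotopy class of web-with-boundary by the mirror reflection $\mathrm{mir}$. This reduces everything to the orientation-preserving case $\phi\simeq\tau^k$. I would then identify the effect of $\tau^k$ on $W_2$ with the outward stacking product $a^k\cdot W_2$: isotoping $\tau$ so that its support lies in a thin collar of the outer boundary, the only feature of any of these standard webs meeting this collar is a single outgoing strand terminating at $p_1$, so the application of $\tau^k$ simply inserts $k$ full windings in that strand, which is precisely what stacking $a^k$ on the outer side produces. This proves $W_1=a^k W_2$.

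For the \emph{moreover} statement with $W_2=f_{ij}$, the additional content is that for this particular family of webs the twist $\tau$ already acts as the identity on the isotopy class, so any $k$ can be traded for $0$. The topological reason is that $f_{ij}$ contains a loop around the core of $\mathbb{A}$ together with a trivalent vertex sitting between this loop and the outer boundary; the winding introduced by $\tau$ in the outer strand can be isotoped down to this vertex and absorbed by sliding it across the two loop edges of $f$, yielding a web isotopic to $f_{ij}$. I expect the main technical obstacle to be verifying this last absorption step precisely, since it relies on the specific shape of $f$ and on a careful planar-isotopy argument through the trivalent vertex rather than an appeal to the abstract mapping class group.
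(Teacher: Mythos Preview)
Your overall strategy matches the paper's, but there is a concrete error in the mirror-invariance step. You assert that each of $1$, $c^j$, and $f_{ij}$ is fixed by the mirror reflection. This is true for $1$ and $f_{ij}$, but it is \emph{false} for $c^j$ when $j>0$: the mirror image of the diagram $c$ is $a^{-1}c$, not $c$. (This is visible from the picture: the double edge of $c$ winds once around the core with a definite handedness, and reflecting reverses that handedness, which is exactly what multiplying by $a^{-1}$ undoes.) Since mirroring is an algebra homomorphism for the outward-stacking product, the mirror image of $c^j$ is $a^{-j}c^j$. So your reduction ``mirror-invariance $\Rightarrow$ orientation-preserving case'' breaks down for $c^j$.

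The repair is immediate and is precisely what the paper does: in the orientation-reversing case one gets that $W_1$ is related by a power of $a$ to the mirror image of $W_2$, and since $\mathrm{mir}(c^j)=a^{-j}c^j$ this is already of the form $a^k c^j$. Thus the conclusion survives, but your argument as written needs this correction. Your handling of the ``moreover'' clause for $f_{ij}$ (absorbing the twist through the trivalent vertex and its adjacent loop) is in the same spirit as the paper's one-line ``it can be seen diagrammatically that $a^k f_{ij}=f_{ij}$''; just be careful that the loop you slide across belongs to the outer component of $f$ itself, not to one of the core loops contributed by the subscripts $i,j$.
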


\begin{proof}
The webs $1$ and $f_{ij}$ are invariant under the mirror-image operation, so the statement follows in these cases from the previous discussion. Morover, it can be seen diagrammatically that $a^kf_{ij}=f_{ij}$ for all $k.$ Now consider the case $W_2=c^j.$ It is an easy observation that the mirror image of the diagram $c$ is equal to $a^{-1}c.$ From this, and the fact that the mirror image operation induces an algebra homomorphism $\mathcal{A}_{1,1}\rightarrow \mathcal{A}_{1,1},$ it follows that the mirror image of $c^j$ is equal to $a^{-j}c^j.$ Thus, $W_1$ is related by a power of $a$ to the diagram $a^{-j}c^j,$ as required.
\end{proof}

\begin{proposition}\label{A1,1 basis}
The algebra $\mathcal{A}_{1,1}$ associated to the twice-marked annulus has a basis of irreducible webs constructed in Theorem \ref{basis}. The basis consists of the following set of webs: $\{a^ic^j \mid i \in \mathbb{Z}, j \in \mathbb{Z}_{\geq 0}\} \cup \{f_{i,j} \mid i,j \in \mathbb{Z}_{\geq 0}\}$.
\end{proposition}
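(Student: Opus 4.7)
The plan is to invoke Theorem \ref{basis}, which already supplies a basis of $\mathcal{A}_{1,1}$ by irreducible webs modulo isotopy and loop-switch moves, and then classify all such irreducible webs on the twice-marked annulus and match them with the proposed list.

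The first step is a topological case split on how the single-strand edge emerging from $p_0$ ends. Since a strand cannot dead-end in the interior, this edge terminates either at $p_1$ or at a trivalent vertex; the same holds for the strand at $p_1$. In \textbf{Case 1}, the two marked points are joined by a single arc $\alpha$. Cutting $\mathbb{A}$ along $\alpha$ produces an open disk, and the Euler measure argument already used to show $\mathcal{S}_q^{G_2}(\Sigma) \cong \mathcal{R}$ when $\Sigma$ is a disk rules out any nonempty closed component in this disk: such a component would bound some face of positive Euler measure. Hence $W$ is $\alpha$ decorated only by curl-type configurations along it, which are by construction the powers of $c$. Combining this with Lemma \ref{MCG} for the Dehn-twist freedom yields $W = a^i c^j$ for a unique $(i,j) \in \mathbb{Z} \times \mathbb{Z}_{\geq 0}$.

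In \textbf{Case 2}, each of $p_0, p_1$ sits at a caltrop trivalent vertex. Here the Euler-measure bookkeeping $\sum_U \mu(U) = \chi(\mathbb{A}) = 0$, together with $\mu(U) \leq 0$ on every internal face, is the main tool. I would argue that the two terminal caltrops, combined with the prohibition on internal double edges between trivalent vertices, force a double-strand loop encircling the core with each caltrop attached to it, giving the diagram $f$; any additional structure outside the annular region bounded by $f$ would create a face of positive Euler measure. Inside that annular region, Proposition \ref{A isomorphic} classifies the possible closed webs as concentric single- and double-strand loops, producing $f_{i,j}$. The second half of Lemma \ref{MCG} shows no Dehn-twist factor arises. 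The two families are then distinguished by whether or not the boundary strands hit a trivalent vertex, and within each family the parameters are read off from the web as invariants under isotopy and loop switch.

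The main obstacle is the Case 2 enumeration: ruling out alternative caltrop configurations (nested caltrops, asymmetric arrangements, or non-separating double loops) that a priori might yield other irreducible webs. The argument will have to combine the sharpness of $\sum_U \mu(U) = 0$ on $\mathbb{A}$ with a careful enumeration of local vertex types allowed for $G_2$ webs, using the absence of internal double edges to cascade the possibilities down to exactly the $f_{i,j}$ family.
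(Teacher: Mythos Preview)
Your case split is where the argument goes wrong. In your Case 1 you assume the single edge leaving $p_0$ runs directly to $p_1$ with no intervening vertex. That cut-to-a-disk argument is fine, but its conclusion is only that $W$ equals that arc, i.e.\ $W=a^i$ for some $i$. There is no room for ``curl-type configurations'': once you cut along a single edge, the complement is a disk and any further structure would have to be a closed component, which you correctly rule out. So Case 1 recovers only the $j=0$ part of the family $\{a^ic^j\}$.

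The real damage is in Case 2. When the edge from $p_0$ hits a trivalent vertex, the web need not be of type $f_{i,j}$: the diagrams $c^j$ for $j\geq 1$ also have trivalent vertices adjacent to both $p_0$ and $p_1$, yet $p_0$ and $p_1$ lie in the \emph{same} connected component. Your claimed conclusion that the two terminal vertices ``force a double-strand loop encircling the core \ldots\ giving the diagram $f$'' is simply false for these webs. The correct dichotomy, which the paper uses, is whether $p_0$ and $p_1$ lie in the same connected component of $W$. The key step you are missing is the analysis of the \emph{boundary face} adjacent to $p_0$: using the Euler-measure bookkeeping one shows it must have positive Euler measure, hence (by irreducibility) be either a self-folded square or a self-folded pentagon. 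The self-folded square forces the component of $p_0$ to be the half of $f$, so $p_0$ and $p_1$ are disconnected and one gets $f_{i,j}$. The self-folded pentagon keeps the component connected; iterating, all remaining internal faces are (self-folded) hexagons and one obtains $c^j$ up to a Dehn twist. You will also need to argue distinctness of the listed webs (that no two $a^ic^j$ coincide, etc.), which the paper handles separately.
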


The proof of Proposition \ref{A1,1 basis} follows from Lemma \ref{set subset basis} and Lemma \ref{basis subset set}.

\begin{lemma}\label{set subset basis}
The elements of $\{a^ic^j \mid i \in \mathbb{Z}, j \in \mathbb{Z}_{\geq 0}\} \cup \{f_{i,j} \mid i,j \in \mathbb{Z}_{\geq 0}\}$ are distinct and each element of the set is some basis element of $\mathcal{A}_{1,1}.$
\end{lemma}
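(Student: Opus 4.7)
The plan is to invoke Theorem \ref{basis}: a basis of $\mathcal{A}_{1,1}$ consists of irreducible webs (no crossings, no internal double-edges, no contractible $n$-gon faces for $n \leq 5$), taken up to surface isotopy and the loop switch equivalence (\ref{loop switch}). So there are two things to verify. First, that every diagram in the proposed list is an irreducible web. Second, that no two of these diagrams represent the same basis class, i.e.\ that no two are related by an isotopy of $\mathbb{A}$ (fixing the marked boundary) together with a sequence of loop switch moves.

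For irreducibility, each $a^i c^j$ and each $f_{i,j}$ is drawn without crossings and without internal double-edges, so the substantive point is the absence of small contractible internal faces. I would use the Euler measure formalism of Definition \ref{Euler measure}: since $\chi(\mathbb{A}) = 0$, the Euler measures of the faces of each diagram sum to zero; each diagram has at least one annular (non-simply-connected) face around the core, whose Euler measure is non-positive, so the remaining faces also have non-positive Euler measure available. Enumerating concretely, for $f_{i,j}$ the internal faces are the small "cap" face carved out by the two trivalent vertices (which I would check has $\mu \leq 0$) together with annular faces between successive nested concentric loops, which have $\mu = 0$ and are not $n$-gons at all. For $a^i c^j$ the faces produced by the twist $a^i$ and the $c$ substructures are easily read off and have $\mu \leq 0$ irrespective of $i$ and $j$.

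For distinctness, I would rely on invariants of a basis class under isotopy and loop switch. The loop switch move (\ref{loop switch}) exchanges the nesting of a single-strand loop inside a double-strand loop with the reverse configuration, and in particular preserves the number of trivalent vertices, the number of single-strand loop components, and the number of double-strand loop components. Combined with the homotopy class of each boundary arc in $(\mathbb{A}, \partial\mathbb{A})$, these invariants suffice to separate the families. The diagrams $f_{i,j}$ have exactly two trivalent vertices and their boundary arcs bound a cap, so $(i,j)$ is recovered as the loop-component counts. The diagrams $a^i c^j$ have $2j$ trivalent vertices and their boundary arcs traverse the annulus essentially, separating them from any $f_{k,\ell}$; fixing $j$ pins down the vertex count, and the exponent $i$ is then recovered as the Dehn-twist winding number of the boundary arcs, invoking the mapping class group discussion that precedes Lemma \ref{MCG}.

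I expect the main obstacle to be the careful enumeration of the internal faces of $a^i c^j$ when the twist $a^i$ interacts with the $c$ substructures, to verify that no small contractible face is inadvertently created and that each interior face has Euler measure at most zero; this amounts to drawing the concatenated diagrams and bookkeeping the edge, vertex, and Euler characteristic contributions face by face. By contrast, once the right isotopy-and-loop-switch invariants are in place, the distinctness half of the lemma is essentially a combinatorial comparison.
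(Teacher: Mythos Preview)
Your proposal is correct and follows essentially the same approach as the paper: verify that each listed web is irreducible, then separate the basis classes using loop counts (for the $f_{i,j}$), connectedness (your ``boundary arcs bound a cap'' versus ``traverse the annulus essentially'' is exactly the paper's observation that the $f_{i,j}$ are disconnected while the $a^ic^j$ are connected), vertex counts (to recover $j$), and winding number (to recover $i$). Two small remarks: the paper simply asserts irreducibility rather than carrying out an Euler-measure analysis; and in your Euler-measure heuristic, note that if one face has $\mu\le 0$ and the total is $0$, the remaining faces have total $\mu\ge 0$, not $\le 0$, so the argument really does rest on the concrete face-by-face enumeration you outline immediately afterward.
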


\begin{proof}
Since the elements of $\{a^ic^j\} \cup \{f_{i,j}\}$ are all irreducible, they are basis elements. To show they are distinct, we must show that they are not related to each other by sequences of isotopies and moves of the form $(\ref{loop switch}).$ Since isotopies and loop switch moves $(\ref{loop switch})$ preserve the number of single-strand and double-strand loops, we see that the set $\{f_{i,j}\}$ consists of distinct elements. Since every element of $\{f_{i,j}\}$ is disconnected while every element of $\{a^ic^j\}$ is connected, we see that $\{f_{i,j}\} \cap \{a^ic^j\} =\emptyset.$

We now show the elements of $\{a^ic^j\}$ are distinct. If $j\neq j',$ then $a^ic^j \neq a^ic^{j'}$ since the two diagrams have different numbers of vertices. If $i\neq i',$ then $a^i$ is not isotopic to $a^{i'}$ since the diagrams have different winding numbers. Now suppose that $a^ic^j$ is isotopic to $a^{i'}c^{j}.$ Within the underlying web of $c^j,$ there is a path from $p_0$ to $p_1$ which is isotopic rel endpoints to the identity element $1 \in \mathcal{A}_{1,1}.$ Concatenating this path with $a^i$ yields a path $\gamma$ in $a^ic^j$ isotopic rel endpoints to $a^i.$ On the other hand, if we concatenate the same path in $c^j$ with $a^{i'},$ we find a path $\gamma'$ in $a^{i'}c^j$ isotopic rel endpoints to $a^{i'}.$ An isotopy of $\mathbb{A}_{1,1}$ taking $a^{i'}c^j$ to $a^{i}c^j$ must take $\gamma$ to $\gamma'$, so we must have $i=i'.$ Therefore, elements of the set $\{a^ic^j\}$ are distinct.
\end{proof}

\begin{lemma}\label{basis subset set}
If $W$ is a basis web of $\mathcal{A}_{1,1},$ then $W$ must be an element of

\begin{equation*}
\{a^ic^j \mid i \in \mathbb{Z}, j \in \mathbb{Z}_{\geq 0}\} \cup \{f_{i,j} \mid i,j \in \mathbb{Z}_{\geq 0}\}.
\end{equation*}
\end{lemma}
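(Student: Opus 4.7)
My plan is to classify the irreducible web $W$ via topological case analysis combined with the Euler measure identity $\sum_U \mu(U) = \chi(\mathbb{A}) = 0$. I would first decompose $W = W_{\text{bdry}} \sqcup W_{\text{free}}$, where $W_{\text{bdry}}$ is the union of the components of $W$ that meet $\{p_0,p_1\}$ and $W_{\text{free}}$ is the union of the remaining closed components. Running the argument from the proof of Proposition \ref{A isomorphic} on $W_{\text{free}}$ — a trivalent vertex on the annulus would force a face of positive Euler measure, which irreducibility forbids — I would conclude that $W_{\text{free}}$ consists only of essential concentric single- and double-strand loops.

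Next I would split on the connectivity of $W_{\text{bdry}}$. Suppose first that $W_{\text{bdry}}$ is connected, so it contains both $p_0$ and $p_1$. If it has no internal trivalent vertices, then $W_{\text{bdry}}$ is a single arc from $p_0$ to $p_1$, classified up to boundary-fixing isotopy by its winding number; Lemma \ref{MCG} then gives $W_{\text{bdry}} = a^i$ for a unique $i \in \mathbb{Z}$. Any essential loop in $W_{\text{free}}$ would meet this arc, producing a crossing and contradicting irreducibility, so $W_{\text{free}} = \emptyset$ and $W = a^i = a^i c^0$. If $W_{\text{bdry}}$ does have internal vertices, the Euler measure identity together with $\mu(U) \leq 0$ for every irreducible internal face should severely constrain the allowable vertex configurations. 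Enumerating local patterns compatible with the forbidden $n$-gons ($n \leq 5$) and with the absence of internal double edges, I expect to find that the only possibility is a zigzag along a single-strand spine through $j$ double caltrops whose double edges form non-contractible bigons wrapping the core of $\mathbb{A}$. This is the pattern of $c^j$, and a final application of Lemma \ref{MCG} will identify $W_{\text{bdry}}$ with $a^i c^j$; once again $W_{\text{free}}$ will be empty by the crossing argument.

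If instead $W_{\text{bdry}}$ has two components, one containing each marked point, then the same Euler-measure and irreducibility analysis applied to each component separately should pin down its local structure as the stub of $f$ at the corresponding boundary. The loops of $W_{\text{free}}$ are then constrained to lie inside the annular region enclosed by the $f$-pattern, since any loop crossing a spine of $W_{\text{bdry}}$ would produce a crossing; they will contribute exactly the indices $(i,j)$ of $f_{i,j}$, and the twist-invariance $a^k f_{ij} = f_{ij}$ from Lemma \ref{MCG} will kill any residual winding ambiguity.

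The hard part will be the vertex-classification step inside the cases where $W_{\text{bdry}}$ has trivalent vertices. The delicate point is that certain configurations which look like forbidden interior bigons — for instance two parallel edges between a common pair of vertices — may in fact bound non-contractible annular regions of $\mathbb{A}$ and so be allowed; the rewriting rules (\ref{fake square}) and (\ref{fake pentagon}) govern precisely such self-folded patterns on the annulus. Careful bookkeeping of which complementary regions are disks versus annuli, together with the $\mu(U) \leq 0$ bound for interior disk $n$-gons with $n \geq 6$, is what should rule out all vertex configurations outside the $c^j$ and $f_{i,j}$ families.
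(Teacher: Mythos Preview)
Your case split on whether $p_0$ and $p_1$ lie in the same component of $W$ matches the paper's, and your endgame via Lemma~\ref{MCG} is right. But the step you flag as hard --- the vertex classification --- is exactly where you are missing the paper's key idea, and your proposed ``enumerate local patterns'' is not a workable plan: nothing in it bounds the complexity of the configurations you would have to rule out.

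The paper does not enumerate vertex configurations; it analyses \emph{boundary faces}. In the disconnected case, write $W_0$ for the component through $p_0$. Since $W_0$ misses $p_1$, the face of $\mathbb{A}\setminus W_0$ containing $p_1$ is non-simply-connected and hence has negative Euler measure; since the total is zero, some other face has positive measure, and irreducibility forces that face to touch $\partial\mathbb{A}$. One then checks by inspection that the only positive-measure boundary face at a single marked endpoint is a self-folded square, which determines $W_0$ (and likewise $W_1$) completely as the two halves of $f$. In the connected case, the boundary face adjacent to $p_0$ must, by the same inspection, be either a self-folded square or a self-folded pentagon; the square would reproduce the disconnected $W_0$ and fail to reach $p_1$, so it is a pentagon, and likewise at $p_1$. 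These two pentagons have Euler measure zero, so every remaining face also has measure zero and is therefore a self-folded hexagon --- forcing the $c^j$ pattern directly.

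Two smaller issues. Analysing $W_{\text{free}}$ before $W_{\text{bdry}}$ is backwards: a small face of $W_{\text{free}}$ alone need not be a face of $W$, so you cannot invoke irreducibility of $W$ on it without further argument. The paper determines the boundary component(s) first and then reads off the complement as either all disks (so $W_{\text{free}}=\emptyset$) or disks plus one annulus (where Proposition~\ref{A isomorphic} applies cleanly). Also, your description of $c^j$ as built from double caltrops with double edges wrapping the core is off: $c$ arises as a stick resolution in relation~(\ref{1crossing}) and carries two all-single-strand trivalent vertices.
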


\begin{proof}
Let $W$ be a basis web. We consider two cases. If $p_0$ and $p_1$ belong to different connected components of $W$, we claim that $W=f_{i,j}$ for some $i,j.$ Let $W_0$ be the connected component of $W$ containing the endpoint $p_0$ and let $W_1$ be the connected component of $W$ containing the endpoint $p_1.$ It will suffice to show the following equality of webs: $W_0 \cup W_1=f.$ Since $W_0$ has an endpoint at $p_0$ but not at $p_1,$ the web $W_0$ must contain a trivalent vertex. Consider the face of $\mathbb{A} \setminus W_0$ containing $p_1$. It is not simply connected, and so must have negative Euler measure. Thus, some other face of $W_0$ must have positive Euler measure. Since $W_0$ is irreducible, this face must be adjacent to the boundary. By visual inspection, this boundary face must be a self-folded square. Hence, $W_0$ is of the form claimed. The same argument shows that $W_1$ is of the form claimed and we must have the equality of diagrams $W_0 \cup W_1=f.$ Since the region between $W_0$ and $W_1$ is an annulus, any irreducible diagram in that region must be a collection of concentric loops, so $W$ must be equal to $f_{i,j}$ for some $i,j.$

For the next case, if $W$ contains a path from $p_0$ to $p_1$, we claim $W=a^ic^j$ for some $i \in \mathbb{Z}, j \in \mathbb{Z}_{\geq 0}.$ By Lemma \ref{MCG} it suffices to show that $W$ is diffeomorphic to $c^j$ for some $j \geq 0.$ Let $W'$ be the connected component of $W$ containing $p_0$ and $p_1$. If $W'$ contains no trivalent vertex, then $W'$ is diffeomorphic to $1=c^0.$ Otherwise, $W'$ contains a trivalent vertex and, like above, we consider the boundary face of $W'$ adjacent to $p_0$. It must be either a self-folded square or a self-folded pentagon. If it is a self-folded square, then $W'$ is the same as $W_0$ from the last paragraph. But $W_0$ did not connect $p_0$ to $p_1.$ Therefore, this boundary face must be a self-folded pentagon. Similarly, the boundary face adjacent to $x_1$ must be a self-folded pentagon as well. Since these boundary faces account for an Euler measure of zero, any other face of $W'$ must also have Euler measure zero. Therefore, all other faces of $W'$ must be hexagons. By inspection, they must be self-folded hexagons and so the diagram $W'$ must be diffeomorphic to some power of $c.$ Since all faces of $W'$ are topologically disks, any other component of $W$ would be contained in the complement of $W'$ and would thus be reducible. So we must have $W=W'=a^ic^j$ as claimed.
\end{proof}

\begin{corollary}\label{A11 commutative}
The algebra $\mathcal{A}_{1,1}$ is commutative.
\end{corollary}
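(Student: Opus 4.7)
The plan is to verify commutativity on pairs of basis elements from Proposition \ref{A1,1 basis}, from which commutativity of $\mathcal{A}_{1,1}$ follows by bilinearity of the product. I would show $W_1 \cdot W_2 = W_2 \cdot W_1$ for all basis webs $W_1, W_2 \in \{a^ic^j : i \in \mathbb{Z}, j \geq 0\} \cup \{f_{i,j} : i, j \geq 0\}$, splitting into three cases based on whether each factor is a connected web $a^i c^j$ (carrying a path from $p_0$ to $p_1$) or a disconnected web $f_{i,j}$.

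When both factors are of type $a^i c^j$, each product should yield a web with a single strand from $p_0$ to $p_1$ threading $j+j'$ hexagonal blocks and carrying $i+i'$ twists. Such a configuration is determined up to isotopy in $\mathbb{A}$ by sliding the features along the strand, so both orderings should produce the same basis web $a^{i+i'} c^{j+j'}$, and one can appeal to Lemma \ref{MCG} to pin down the power of $a$ if needed. When both factors are of type $f_{i,j}$, each product yields a disconnected web with a mushroom at each of $p_0, p_1$ (one contributed by each factor), a central closed sub-web formed by joining the two inward-facing mushrooms, and the same collection of $i+i'$ single-strand plus $j+j'$ double-strand concentric loops; the two orderings should give isotopic diagrams in $\mathbb{A}$.

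The main obstacle is the mixed case, where $W_1 = a^i c^j$ and $W_2 = f_{i',j'}$: the two products connect the strand of $a^i c^j$ to different mushrooms of $f_{i',j'}$ (the $p_1$-mushroom in $W_1 \cdot W_2$ versus the $p_0$-mushroom in $W_2 \cdot W_1$), and the initial diagrams are not isotopic. My plan is to apply the self-folded square relation (\ref{fake square}), together with the small-face reductions from the defining relations, to the strand-plus-mushroom sub-web that encloses the loops coming from $f_{i',j'}$. By the symmetry of $f_{i',j'}$'s two mushrooms---both being self-folded squares of identical internal structure---the reductions should yield the same $\mathcal{R}$-linear combination of basis elements in both orderings, with each term of the expansion of (\ref{fake square}) matching across the two products. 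Carefully tracking the coefficients and using Lemma \ref{MCG} to identify the irreducible pieces is where the bulk of the work will lie.
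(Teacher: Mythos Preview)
Your overall strategy matches the paper's: reduce to checking commutativity on the basis from Proposition~\ref{A1,1 basis}, handling the cases by isotopy where possible and by short skein computations otherwise. The paper's own proof is terse and says exactly this, deferring the computations to the proof of Corollary~\ref{presentation A1,1}. Two of your case analyses, however, contain genuine errors that you should correct.

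\textbf{Case 2 ($f_{i,j}\cdot f_{k,l}$ versus $f_{k,l}\cdot f_{i,j}$).} These two diagrams are \emph{not} isotopic in $\mathbb{A}$. In $f_{i,j}\cdot f_{k,l}$ the $i$ single and $j$ double loops sit \emph{outside} the central closed sub-web while the $k$ single and $l$ double loops sit \emph{inside} it; in the reversed product these positions are swapped. Since the central closed web winds around the core of the annulus, no isotopy (rel boundary) can carry a loop from one side of it to the other, and the loop-switch move~(\ref{loop switch}) only exchanges two adjacent concentric loops. To prove equality you must first apply the self-folded square relation~(\ref{fake square}) to the central closed web, replacing it by an $\mathcal{R}$-linear combination of concentric loops; only then can (\ref{loop switch}) be invoked to reorder and identify the resulting basis webs. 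This is precisely the computation $f^2=f_{2,0}-[2]^2f_{0,1}+\tfrac{[8]}{[4]}f_{1,0}-[7]f$ appearing in the proof of Corollary~\ref{presentation A1,1}.

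\textbf{Case 3 (the mixed case $a^ic^j\cdot f_{i',j'}$).} The relation you want here is the self-folded \emph{pentagon} relation~(\ref{fake pentagon}), not the self-folded square~(\ref{fake square}). When a vertex of $c$ is attached to one mushroom of $f$, the mushroom's self-folded square face acquires one additional corner and becomes a self-folded pentagon; compare the computation $cf=f_{1,0}-\tfrac{[6]}{[2][3]}f$ in the proof of Corollary~\ref{presentation A1,1}. Your symmetry heuristic (the two mushrooms of $f$ have identical structure, so attaching $c$ on either side yields the same expansion) is the right idea, but it should be executed with~(\ref{fake pentagon}) rather than~(\ref{fake square}). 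The sub-case $j=0$ is the pure isotopy $a\cdot f_{i',j'}=f_{i',j'}=f_{i',j'}\cdot a$ recorded in Lemma~\ref{MCG}.
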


\begin{proof}
By the characterization of basis elements of $\mathcal{A}_{1,1}$ in Proposition \ref{A1,1 basis}, one only needs to check that the elements $a,c,f_{i,j}$ commute with each other. Some of these are checked by simple isotopies while others are checked using short computations similar to the ones in the proof of the next corollary.
\end{proof}

\begin{corollary}\label{presentation A1,1}
The algebra $\mathcal{A}_{1,1}$ has the following presentation as an $\mathcal{R}\text{-algebra}$. It is the quotient of the algebra generated by the commuting variables $a,a^{-1},c,f_{ij}$ for $i,j \geq 0$ subject to the relations
\begin{align*}
af_{i,j}&=f_{i,j}\\
a^{-1}f_{i,j}&=f_{i,j}\\
f_{i,j}f_{k,l}&=f_{i+k+2,j+l}-[2]^2f_{i+k,j+l+1}+\frac{[8]}{[4]}f_{i+k+1,j+l}-[7]f_{i+k,j+l}\\
cf_{i,j}&=f_{i+1,j}-\frac{[6]}{[2][3]}f_{i,j}.\\
\end{align*}

\end{corollary}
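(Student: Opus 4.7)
The plan is to construct a surjective algebra homomorphism $\varphi$ from the abstractly presented commutative $\mathcal{R}$-algebra $\tilde{\mathcal{A}}_{1,1}$ to $\mathcal{A}_{1,1}$ sending each formal generator to its namesake, verify the four relations hold in $\mathcal{A}_{1,1}$ so that $\varphi$ is well-defined, and then establish injectivity by exhibiting a spanning set of $\tilde{\mathcal{A}}_{1,1}$ that matches the basis of $\mathcal{A}_{1,1}$ from Proposition \ref{A1,1 basis}.

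The relations $af_{i,j} = f_{i,j}$ and $a^{-1}f_{i,j} = f_{i,j}$ hold in $\mathcal{A}_{1,1}$ immediately from Lemma \ref{MCG}, which already established $a^k f_{i,j} = f_{i,j}$ for all $k \in \mathbb{Z}$. For the product $f_{i,j} \cdot f_{k,l}$, stacking two $f$-diagrams places the outer self-folded square of the inner $f$-factor directly against the inner self-folded square of the outer $f$-factor, producing in the middle of the annulus exactly the self-folded square configuration on the left-hand side of (\ref{fake square}). Applying (\ref{fake square}) and tracking how the $i+k$ single-strand loops and $j+l$ double-strand loops already present between the two outermost remaining self-folded squares combine with the new loops introduced by each of the four terms on the right-hand side yields the stated product formula. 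Similarly, the relation for $cf_{i,j}$ comes from juxtaposing the inner self-folded pentagon of $c$ against the outer self-folded square of $f_{i,j}$ to produce the fake pentagon configuration on the left-hand side of (\ref{fake pentagon}), whose two terms expand directly to $f_{i+1,j} - \frac{[6]}{[2][3]} f_{i,j}$.

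Surjectivity of $\varphi$ is immediate, since every basis element listed in Proposition \ref{A1,1 basis} is the image of the obvious monomial in the generators. For injectivity, it suffices to show that the relations allow us to rewrite every element of $\tilde{\mathcal{A}}_{1,1}$ as an $\mathcal{R}$-linear combination of $\{a^i c^j \mid i \in \mathbb{Z}, j \geq 0\} \cup \{f_{i,j} \mid i, j \geq 0\}$, because $\varphi$ then sends this spanning set bijectively to the basis. Working in the commutative algebra $\tilde{\mathcal{A}}_{1,1}$, any monomial groups as $a^m c^n$ times a product of $f_{p,q}$-factors. If no $f$-factor appears, the monomial is already in normal form. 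If at least one $f$-factor appears, the relations $a f_{i,j} = a^{-1}f_{i,j} = f_{i,j}$ absorb all powers of $a$; the relation $cf_{i,j} = f_{i+1,j} - \frac{[6]}{[2][3]}f_{i,j}$ then trades each $c$-factor for a linear combination of single $f$'s; finally, the $ff$-product relation reduces any product of two or more $f$-factors to a linear combination of single $f_{p,q}$'s by induction on the number of $f$-factors.

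The main obstacle is the careful diagrammatic verification of the $f_{i,j} f_{k,l}$ product formula: one must match each of the four summands on the right-hand side of (\ref{fake square}) with the correct shift in loop indices and confirm that after expansion the resulting diagrams really are of the advertised form $f_{\cdot,\cdot}$, with no spurious reducible subconfigurations created along the way. The $cf_{i,j}$ derivation is an easier analog. All remaining steps are routine bookkeeping, relying on the commutativity of $\mathcal{A}_{1,1}$ established in Corollary \ref{A11 commutative}.
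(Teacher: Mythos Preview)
Your proposal is correct and follows essentially the same approach as the paper's proof: verify the four relations diagrammatically via the self-folded square relation (\ref{fake square}) and self-folded pentagon relation (\ref{fake pentagon}), then argue that these relations suffice to reduce any product of generators to an $\mathcal{R}$-linear combination of the basis elements of Proposition \ref{A1,1 basis}. Your explicit framing in terms of a surjective homomorphism $\varphi:\tilde{\mathcal{A}}_{1,1}\to\mathcal{A}_{1,1}$ whose injectivity follows from the normal-form reduction is a slightly more formal packaging of the same logic the paper uses, but the substance is identical.
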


\begin{proof}
We first suppose the relations above hold. Since each basis element of our basis for $\mathcal{A}_{1,1}$ can be written as a product of these proposed generators, our list is a generating set for $\mathcal{A}_{1,1}$. On the other hand, any element written as a linear combination of products of these generators can be reduced to a linear combination of basis elements using only the relations listed in the proposition. Therefore, if the relations hold then they are a full set of relations for $\mathcal{A}_{1,1}.$

We now check that the relations hold. The relation $af_{i,j}=f_{i,j}=a^{-1}f_{i,j}$ follows from observing a simple isotopy. The relation which rewrites $f_{i,j}f_{k,l}$ follows from the computation $f^2=f_{2,0}-[2]^2f_{0,1}+\frac{[8]}{[4]}f_{1,0}-[7]f,$ using the self-folded square relation (\ref{fake square}).
The relation which rewrites $cf_{i,j}$ follows from the computation $cf=f_{1,0}-\frac{[6]}{[2][3]}f,$ using the self-folded pentagon relation (\ref{fake pentagon}). 
\end{proof}

The following will be used later on to identify a nicely behaved subalgebra of $\mathcal{A}_{1,1}$ with a subalgebra of the ring $\mathcal{R}[\lambda_1^{\pm 1}, \lambda_2^{\pm 1}].$

\begin{corollary}
The Laurent polynomial ring $\mathcal{R}[a^{\pm 1},c]$ is a subalgebra of $\mathcal{A}_{1,1}.$
\end{corollary}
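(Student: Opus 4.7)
The plan is to exhibit an explicit injective $\mathcal{R}$-algebra map $\varphi:\mathcal{R}[a^{\pm 1},c] \to \mathcal{A}_{1,1}$ sending the formal variables $a,a^{-1},c$ to the diagrammatic elements of the same names. Well-definedness of $\varphi$ will follow from facts already in hand, and injectivity will be immediate from the basis description of $\mathcal{A}_{1,1}.$

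First, I would verify that the assignment extends to a well-defined algebra map. By Corollary \ref{A11 commutative}, the algebra $\mathcal{A}_{1,1}$ is commutative, so in particular the diagrammatic elements $a$ and $c$ commute. A straightforward isotopy shows that $a \cdot a^{-1} = 1$ in $\mathcal{A}_{1,1}$ (indeed $a$ and $a^{-1}$ are defined to be oppositely-oriented twist diagrams, and their concatenation is isotopic to the identity diagram). By the universal property of the Laurent polynomial ring, there is a unique $\mathcal{R}$-algebra homomorphism $\varphi: \mathcal{R}[a^{\pm 1},c] \to \mathcal{A}_{1,1}$ with $\varphi(a)=a$, $\varphi(a^{-1})=a^{-1}$, and $\varphi(c)=c$.

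Next, I would check injectivity. The set $\{a^i c^j : i \in \mathbb{Z},\ j \in \mathbb{Z}_{\geq 0}\}$ is an $\mathcal{R}$-basis for $\mathcal{R}[a^{\pm 1},c]$. Under $\varphi$, the monomial $a^i c^j$ maps to the diagrammatic product $a^i c^j \in \mathcal{A}_{1,1}$. By Proposition \ref{A1,1 basis}, these diagrammatic elements are precisely a subset of the $\mathcal{R}$-basis of $\mathcal{A}_{1,1}$, and by Lemma \ref{set subset basis} they are all distinct. Hence $\varphi$ sends a basis of the domain injectively into a basis of the codomain, so $\varphi$ is injective, and its image is the desired subalgebra of $\mathcal{A}_{1,1}$ isomorphic to $\mathcal{R}[a^{\pm 1},c]$.

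There is essentially no hard step here; the work has already been done in proving Proposition \ref{A1,1 basis} and Corollary \ref{A11 commutative}. The only conceptual point worth emphasizing is that while the products $a^i c^j$ in $\mathcal{A}_{1,1}$ might a priori reduce further via the defining relations (in particular one might worry that multiplying by $c$ introduces $f_{i,j}$ terms via the relation $cf_{i,j} = f_{i+1,j} - \tfrac{[6]}{[2][3]}f_{i,j}$ from Corollary \ref{presentation A1,1}), this never happens because no $f_{i,j}$ ever appears in a product of $a^{\pm 1}$'s and $c$'s; the basis classification guarantees that such monomial products remain within the linearly independent family $\{a^i c^j\}$.
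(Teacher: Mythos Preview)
Your proposal is correct and is essentially the argument the paper has in mind: the paper states this corollary without proof, treating it as immediate from the basis description of $\mathcal{A}_{1,1}$ in Proposition~\ref{A1,1 basis} together with the commutativity established in Corollary~\ref{A11 commutative}. You have simply made that implicit reasoning explicit, and every step is justified by results already proved.
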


\begin{remark} The algebra $\mathcal{A}_{1,1}$ actually admits a finite set of generators : $a,a^{-1},c,f.$ Indeed, in the next section we introduce elements $x^\star$ and $y^\star,$ and it can be seen that $f_{i,j}=(x^\star)^i(y^\star)^jf.$ Then in the proof of Proposition \ref{y star computation}, it is shown  $x^\star,y^\star$ are generated by $a,a^{-1},c,f.$ However, it is more complicated to describe the algebra using the finite list of generators.
\end{remark}

\subsection{Transparency and actions of \texorpdfstring{$\mathcal{A}$}{A} on \texorpdfstring{$\mathcal{A}_{1,1}$}{A1,1}}\label{transparency section}

We now describe two actions of $\mathcal{A}$ on $\mathcal{A}_{1,1}$ and use them to define transparent polynomials, analogous to the notion defined in \cite{Le15}.

\begin{definition}
We define an algebra homomorphism denoted by $(-)^{\star}: \mathcal{A} \rightarrow \mathcal{A}_{1,1}$ such that if $D \in \mathcal{A}$ is a diagram, the element $D^{\star} \in \mathcal{A}_{1,1}$ is the diagram of $D$ placed over the identity strand $1 \in \mathcal{A}_{1,1}.$ It can be checked diagrammatically that this is a well-defined algebra homomorphism.

Similarly, we define $(-)_\star: \mathcal{A} \rightarrow \mathcal{A}_{1,1}$ to be the algebra homomorphism such that if $D \in \mathcal{A}$ is a diagram, the element $D_{\star} \in \mathcal{A}_{1,1}$ is the diagram of $D$ placed below the identity strand $1 \in \mathcal{A}_{1,1}.$
\end{definition}

Recall that the algebra $\mathcal{A}$ is isomorphic to $\mathcal{R}[x,y].$ Below we show the images of the homomorphisms $(-)^\star$ and $(-)_\star$ on the generators $x,y$ of $\mathcal{A}.$

\begin{align*}
x^\star&=\xusebox{xUpStar} & y^\star&=\xusebox{yUpStar}\\
x_\star&=\xusebox{xDownStar} & y_\star&=\xusebox{yDownStar}
\end{align*}

The following definition uses the fact that $(-)^\star$ and $(-)_\star$ are algebra homomorphisms.

\begin{definition}\label{transparent definition}
A polynomial $S(x,y) \in \mathcal{A}$ is said to be \textit{transparent} if either of the following equivalent conditions hold:
\begin{enumerate}
    \item $S(x,y)^\star=S(x,y)_\star \in \mathcal{A}_{1,1}$
    \item $S(x^\star,y^\star)=S(x_\star,y_\star) \in \mathcal{A}_{1,1}.$
\end{enumerate}
\end{definition}

For example, the empty diagram in $\mathcal{A}$ is a transparent element regardless of the choice of $q \in \mathcal{R}.$ If $q=1$ then every element of $\mathcal{A}$ is transparent. When $q$ is chosen to be a root of unity, we will find more interesting elements which are transparent.

We now briefly explain the process which takes a transparent polynomial and constructs central elements in the skein algebra $\mathcal{S}_q^{G_2}(\Sigma)$ of a surface $\Sigma.$

\begin{definition}
Let a knot diagram $K$ on $\Sigma$ represent a framed knot in the thickened surface $\Sigma \times [0,1].$ Given a monomial $x^iy^j \in \mathcal{R}[x,y],$ the \textit{threading} of $x^iy^j$ along $K$ is denoted by $K^{[x^iy^j]}.$ It is obtained by taking $i+j$ parallel copies of $K$, in the direction of the framing, such that $i$ of these copies are colored by a single-strand and $j$ of these copies are colored by a double-strand. We can extend this definition linearly to obtain the threading $K^{[S(x,y)]}$ of a polynomial $S(x,y) \in \mathcal{R}[x,y]$ along $K.$
\end{definition}

\begin{lemma}
If an element $S(x,y) \in \mathcal{A}$ is transparent, then the threading $K^{[S]}$ of $S$ along any knot $K$ is a central element of $\mathcal{S}_q^{G_2}(\Sigma).$
\end{lemma}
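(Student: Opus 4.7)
The strategy is to translate the global centrality statement into a local transparency check at each place where a strand of a generic web meets $K$. By linearity and Theorem \ref{basis}, it suffices to establish $K^{[S]}\cdot W = W\cdot K^{[S]}$ in $\mathcal{S}_q^{G_2}(\Sigma)$ for every basis web $W$. Isotope $W$ so that it meets $K$ transversally in finitely many points, each lying in the interior of a single- or double-strand edge of $W$. The two products differ only in the over/under data at these transverse intersections: in $K^{[S]}\cdot W$ every parallel copy of $K$ lies above the arcs of $W$, and in $W\cdot K^{[S]}$ the order is reversed. The difference between the two products can be realized by sliding the parallel copies of $K^{[S]}$ through the arcs of $W$ one intersection at a time, so it suffices to analyze a single such slide in a small annular neighborhood $N$ of $K$.

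First suppose the relevant arc $\alpha$ of $W\cap N$ is a single strand. Collapsing $N$ to an annulus and viewing $\alpha$ as the identity strand of $\mathcal{A}_{1,1}$, the element obtained by placing $K^{[S]}$ above $\alpha$ is exactly $S(x,y)^{\star}$ and placing it below gives $S(x,y)_\star$. The transparency hypothesis (Definition \ref{transparent definition}) says these agree in $\mathcal{A}_{1,1}$, so performing this local move leaves the skein element of $\Sigma$ unchanged. Iterating over all single-strand intersections of $W$ with $K$ therefore gives the desired equality whenever $W$ has only single-strand arcs crossing $K$.

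The main obstacle is the case where $\alpha$ is a double strand, since the transparency condition is formulated only for the single-strand identity. To handle this, I would exhibit a local model for the double-strand analogue of $\mathcal{A}_{1,1}$ and reduce it to single-strand transparency using the Kuperberg relations. Concretely, in a short rectangle about the crossing I would introduce two opposing caltrop vertices of type $\xusebox{3--doubleCaltrop}$, expressing the double strand $\alpha$ as a single-strand bigon capped off by two trivalent vertices, and then invoke the single-strand transparency established above to slide $K^{[S]}$ across each of the two single strands of the bigon in turn. The caltrop vertices sit outside the region swept by the slide, so after the move the bigon re-assembles into the original double strand $\alpha$ and the rest of $W$ is unchanged. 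The only subtleties are verifying that the intermediate configurations reduce consistently (using the double-edge removal \eqref{double edge removal}, the bigon relations \eqref{2gon}, and the loop-switch equivalence \eqref{loop switch}), and that the scalar factors picked up on the two sides of the slide match. Once this local double-strand statement is in hand, gluing all the local moves together over the finitely many intersections of $W$ with $K$ produces the equality $K^{[S]}\cdot W = W\cdot K^{[S]}$ and completes the proof.
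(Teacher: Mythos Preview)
Your proposal is correct and follows essentially the same route as the paper: reduce to showing $K^{[S]}$ passes through any single edge of $W$, handle the single-strand case directly via transparency in $\mathcal{A}_{1,1}$, and for the double-strand case replace the double edge by a single-strand bigon using the $\xusebox{3--doubleCaltrop}$ vertices. The paper does this last step in one line by invoking the second relation of \eqref{2gon}, namely $\xusebox{2--doubleVline} = -\frac{1}{[2]^2}\,\xusebox{doubleStrandDoublePop}$, so the ``subtleties'' you flag about scalar factors and intermediate reductions do not actually arise: the same scalar $-\tfrac{1}{[2]^2}$ appears before and after the slide and cancels, and no appeal to \eqref{double edge removal} or \eqref{loop switch} is needed.
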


\begin{proof}
The transparency property of $S(x,y)$ implies that the element $K^{[S]}$ can pass through any single-strand edge. To finish the proof of the lemma, it suffices to show that $K^{[S]}$ can be pass through a double-strand edge as well. This follows by using the second relation of (\ref{2gon}):
\begin{equation*}
 \xusebox{2--doubleVline} = -\frac{1}{[2]^2} \xusebox{doubleStrandDoublePop}, 
\end{equation*}

which shows that a small interval of a double-strand can be replaced by two single-strands.
\end{proof}

Threadings of polynomials can be defined similarly for links, and threadings of transparent polynomials along links yield central elements in the skein algebra as well.

\section{Power sum polynomials for \texorpdfstring{$G_2$}{G2}}

We now develop the ingredients to construct transparent polynomials.

\subsection{Polynomials in \texorpdfstring{$\mathcal{R}[\lambda_1^{\pm1},\lambda_2^{\pm1}]$}{R[l1,l2]} }

Consider the 2-variable Laurent polynomial algebra $E=\mathcal{R}[\lambda_1^{\pm1},\lambda_2^{\pm1}].$ Later, we will relate certain elements of this ring with elements in the algebra $\mathcal{A}_{1,1}.$

\begin{definition}\label{degree}
To a monomial $\lambda_1^i\lambda_2^j\in E$ we associate a degree $d_1$ and a bidegree $d_2$ given by 

\begin{align*}
d_1(\lambda_1^i\lambda_2^j)&=i+j \in \mathbb{Z}\\
d_2(\lambda_1^i\lambda_2^j)&=(i,j) \in \mathbb{Z}^2.\\
\end{align*}
\end{definition}

We note that $E$ has a natural basis consisting of monomials of distinct bidegree:

\begin{equation*}
\{\lambda_1^i\lambda_2^j \mid i,j \in \mathbb{Z}\}.
\end{equation*}

We can place a linear order on our monomials by using the lexicographic order and declaring $d_2(\lambda_1^i\lambda_2^j) > d_2(\lambda_1^{i'},\lambda_2^{j'})$ if either $i>i'$ or $i=i'$ and $j>j'.$

\begin{definition}\label{bidegree}
This linear order on monomials allows us to extend $d_1$ and $d_2$ to all of $E$ by the following. If $p=\sum a_{ij} \lambda_1^i\lambda_2^j$ is a finite $\mathcal{R}\text{-linear}$ combination of basis monomials, we define the (bi)degree of $p$ to be the (bi)degree of the top monomial of $p$:

\begin{align*}
d_1(p)&=\max \{d_1(\lambda_1^i\lambda_2^j) \mid a_{ij} \neq 0 \}\\
d_2(p)&=\max \{d_2(\lambda_1^i\lambda_2^j) \mid a_{ij} \neq 0 \}.   
\end{align*}

\end{definition}

Since $\mathcal{R}$ is an integral domain, the degrees satisfy the following property:

\begin{proposition}
If $p_1,p_2 \in E$, then

\begin{align*}
d_1(p_1p_2)&=d_1(p_1)+d_1(p_2)\\
d_2(p_1p_2)&=d_2(p_1)+d_2(p_2).    
\end{align*}
\end{proposition}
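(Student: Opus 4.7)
The plan is to handle the two claims separately, in both cases reducing to the integral domain property of $\mathcal{R}$ (and hence of $E$, which is a localization of $\mathcal{R}[\lambda_1, \lambda_2]$ over an integral domain).

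For the bidegree claim, write $(i_1, j_1) = d_2(p_1)$ and $(i_2, j_2) = d_2(p_2)$ with leading coefficients $a, b \in \mathcal{R}$, both nonzero by Definition \ref{bidegree}. The coefficient of $\lambda_1^{i_1+i_2} \lambda_2^{j_1+j_2}$ in $p_1 p_2$ is a sum, over pairs of monomials $\lambda_1^i \lambda_2^j$ from $p_1$ and $\lambda_1^{i'} \lambda_2^{j'}$ from $p_2$ with $i + i' = i_1 + i_2$ and $j + j' = j_1 + j_2$, of the corresponding coefficient products. A short case split on the lex order shows that if $(i,j)$ is strictly less than $(i_1, j_1)$ in lex, then either $i < i_1$, forcing $i' > i_2$, or else $i = i_1$ and $j < j_1$, forcing $j' > j_2$; either conclusion contradicts the maximality of $(i_2, j_2)$ as the bidegree of $p_2$. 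So the only contributing pair is the leading pair, with coefficient $ab \neq 0$. The same case split also shows that no monomial of $p_1 p_2$ can have bidegree strictly exceeding $(i_1 + i_2, j_1 + j_2)$, which yields $d_2(p_1 p_2) = d_2(p_1) + d_2(p_2)$.

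For the degree claim, I would exploit the natural $\mathbb{Z}$-grading $E = \bigoplus_{k \in \mathbb{Z}} E_k$, where $E_k$ is the $\mathcal{R}$-span of monomials $\lambda_1^i \lambda_2^j$ with $i + j = k$; this is genuinely a grading since $E_k \cdot E_l \subseteq E_{k+l}$. Decomposing $p_1 = \sum_k p_{1,k}$ and $p_2 = \sum_k p_{2,k}$ into graded components, the hypotheses $d_1(p_1) = d$ and $d_1(p_2) = e$ say precisely that $p_{1,d} \neq 0$ and $p_{2,e} \neq 0$ while all higher-degree components vanish. The total-degree-$(d+e)$ component of $p_1 p_2$ is therefore exactly $p_{1,d} \cdot p_{2,e} \in E_{d+e}$, and no component of higher total degree can appear. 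Since $E$ is an integral domain, $p_{1,d} \cdot p_{2,e} \neq 0$, and so $d_1(p_1 p_2) = d + e$.

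The only substantive point in either argument is that the leading-term product does not vanish, which is exactly where the integral domain hypothesis on $\mathcal{R}$ is used; the rest is a routine inspection of how the lex order and the total-degree grading interact with multiplication, so I do not anticipate any real obstacle.
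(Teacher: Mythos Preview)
Your proof is correct. The paper does not actually supply a proof here: it simply remarks, just before the proposition, that ``since $\mathcal{R}$ is an integral domain, the degrees satisfy the following property,'' and leaves it at that. Your write-up fleshes out exactly this hint, handling $d_2$ via the standard leading-term argument for the lexicographic order and $d_1$ via the total-degree $\mathbb{Z}$-grading of $E$; in both cases the integral domain hypothesis enters precisely where you identify it, to ensure the top coefficient (respectively, top graded piece) of the product does not vanish. There is nothing to correct.
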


Consider the two elements $\lambda_1+\lambda_2$ and $\lambda_1\lambda_2 \in E.$ For $i \in \mathbb{Z}_{\geq 0}$ and $j \in \mathbb{Z}$ a monomial $(\lambda_1+\lambda_2)^i(\lambda_1\lambda_2)^j$ is a monic element in $E$ of bidegree $(i+j,j).$ Thus, these monomials are linearly independent over $\mathcal{R}$. Equivalently, $\lambda_1+\lambda_2$ and $\lambda_1\lambda_2$ are algebraically independent over $\mathcal{R}.$ We are interested in the subalgebra $E' \subseteq E$ generated by these variables:

\begin{equation*}
E'=\mathcal{R}[\lambda_1+\lambda_2,(\lambda_1\lambda_2)^{\pm1}].
\end{equation*}

Consequently, a basis of $E'$ is given by

\begin{equation*}
\{(\lambda_1+\lambda_2)^i(\lambda_1\lambda_2)^j \mid i \in \mathbb{Z}_{\geq 0} \text{ and } j \in \mathbb{Z} \}.
\end{equation*}

We are interested in two distinguished elements $\boldsymbol{x}, \boldsymbol{y} \in E'$ given by

\begin{align*}
\boldsymbol{x}=&\lambda_1+\lambda_2+\lambda_1\lambda_2+1+(\lambda_1\lambda_2)^{-1}+\lambda_2^{-1}+\lambda_1^{-1} \\
\boldsymbol{y}=& \lambda_1^2\lambda_2+\lambda_1\lambda_2^2+\lambda_1\lambda_2+\lambda_1+\lambda_2+\lambda_1\lambda_2^{-1}+1 \\
& +1+\lambda_1^{-1}\lambda_2+\lambda_2^{-1}+\lambda_1^{-1}+(\lambda_1\lambda_2)^{-1}+(\lambda_1\lambda_2^{2})^{-1}+(\lambda_1^2\lambda_2)^{-1}.\\
\end{align*}

Using $(\lambda_1\lambda_2)^{-1}(\lambda_1+\lambda_2)=\lambda_1^{-1}+\lambda_2^{-1},$ it is an easy exercise to show that $\boldsymbol{x},\boldsymbol{y} \in E'.$ The expression defining the element $\boldsymbol{x}$ models the trace of a matrix in the 7-dimensional fundamental representation of $G_2$ while $\boldsymbol{y}$ models the same for the 14-dimensional fundamental representation.

We have written $\boldsymbol{x}$ as a sum of 7 terms and $\boldsymbol{y}$ as a sum of 14 terms. We remark that $\boldsymbol{y}=e_2-\boldsymbol{x},$ where $e_2$ is the second elementary symmetric sum (see Definition \ref{elementary sums definition} below) of the terms of $\boldsymbol{x}.$

\begin{proposition}
The elements $\boldsymbol{x}$ and $\boldsymbol{y}$ are algebraically independent over $\mathcal{R}.$
\end{proposition}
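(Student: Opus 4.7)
The plan is to exploit the multiplicative bidegree $d_2$, whose additivity under products was already recorded in the proposition above (and which relies on the hypothesis that $\mathcal{R}$ is an integral domain). If $d_2(\boldsymbol{x})$ and $d_2(\boldsymbol{y})$ are linearly independent as vectors in $\mathbb{Q}^2$, then the map $(i,j) \mapsto i \cdot d_2(\boldsymbol{x}) + j \cdot d_2(\boldsymbol{y})$ is injective on $\mathbb{Z}_{\geq 0}^2$, so the products $\boldsymbol{x}^i \boldsymbol{y}^j$ have pairwise distinct leading bidegrees and cannot cancel in any polynomial relation over $\mathcal{R}$.

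First I will read off the leading bidegrees directly from the defining expansions. The bidegrees of the seven monomials composing $\boldsymbol{x}$ are $(1,0), (0,1), (1,1), (0,0), (-1,-1), (0,-1), (-1,0)$, whose lexicographic maximum is $(1,1)$, attained uniquely by the monomial $\lambda_1 \lambda_2$ with coefficient $1$. Similarly, among the monomials appearing in the expansion of $\boldsymbol{y}$, the lexicographic maximum is $(2,1)$, attained uniquely by $\lambda_1^2 \lambda_2$ with coefficient $1$. Hence $d_2(\boldsymbol{x}) = (1,1)$ and $d_2(\boldsymbol{y}) = (2,1)$, each with leading coefficient $1 \in \mathcal{R}$, and these two vectors are manifestly linearly independent over $\mathbb{Q}$.

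Next, I will suppose for contradiction that a nonzero polynomial $P(u,v) = \sum_{(i,j) \in S} a_{ij} u^i v^j \in \mathcal{R}[u,v]$ satisfies $P(\boldsymbol{x}, \boldsymbol{y}) = 0$, with $a_{ij} \neq 0$ for each $(i,j) \in S$. By multiplicativity of $d_2$, the term $a_{ij}\boldsymbol{x}^i \boldsymbol{y}^j$ has leading bidegree $(i+2j,\, i+j)$ and leading coefficient $a_{ij}$. The map $(i,j) \mapsto (i+2j,\, i+j)$ is injective on $\mathbb{Z}_{\geq 0}^2$: from $i+2j = i'+2j'$ and $i+j = i'+j'$ one subtracts to obtain $j = j'$ and then $i = i'$. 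Choosing $(i_0, j_0) \in S$ with $(i_0 + 2j_0,\, i_0 + j_0)$ lexicographically maximal among the images of $S$, the monomial $\lambda_1^{i_0 + 2j_0} \lambda_2^{i_0 + j_0}$ appears in $P(\boldsymbol{x}, \boldsymbol{y})$ only from the term indexed by $(i_0, j_0)$, with coefficient $a_{i_0 j_0} \neq 0$ in $\mathcal{R}$. This contradicts $P(\boldsymbol{x}, \boldsymbol{y}) = 0$. There is no real obstacle to this argument; the integral domain hypothesis on $\mathcal{R}$ enters precisely to guarantee both the multiplicativity of $d_2$ and that the leading coefficient $a_{i_0 j_0}$ survives as a nonzero scalar.
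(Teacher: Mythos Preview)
Your proof is correct and takes essentially the same approach as the paper's own proof: both compute $d_2(\boldsymbol{x}^i\boldsymbol{y}^j)=(i+2j,i+j)$ via the multiplicativity of the bidegree and observe that these bidegrees are pairwise distinct, so the monic elements $\boldsymbol{x}^i\boldsymbol{y}^j$ are linearly independent over $\mathcal{R}$. Your version is simply more explicit about reading off $d_2(\boldsymbol{x})=(1,1)$ and $d_2(\boldsymbol{y})=(2,1)$ and about the injectivity of $(i,j)\mapsto(i+2j,i+j)$, where the paper states these in one line.
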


\begin{proof}
We view each monomial $\boldsymbol{x}^i\boldsymbol{y}^j$ as a monic element of $E=\mathcal{R}[\lambda_1^{\pm1},\lambda_2^{\pm1}]$ and compute its bidegree $d_2(\boldsymbol{x}^i\boldsymbol{y}^j)=d_2(\boldsymbol{x}^i)+d_2(\boldsymbol{y}^j)=i\cdot d_2(\boldsymbol{x})+j \cdot d_2(\boldsymbol{y})=(i,i)+(2j,j)=(i+2j,i+j).$ We see the set of monomials $\{\boldsymbol{x}^i\boldsymbol{y}^j\}_{i,j \geq 0}$ consists of monic elements of $E$ of distinct bidegrees. Therefore, the monomials are linearly independent over $\mathcal{R}$ and $\boldsymbol{x},\boldsymbol{y}$ are algebraically independent.
\end{proof}

More generally, we will be using the elements of $E':$

\begin{align*}
\boldsymbol{x}^{(i)}=&\lambda_1^i+\lambda_2^i+(\lambda_1\lambda_2)^i+1+(\lambda_1\lambda_2)^{-i}+\lambda_2^{-i}+\lambda_1^{-i} \\
\boldsymbol{y}^{(i)}=& (\lambda_1^2\lambda_2)^i+(\lambda_1\lambda_2^2)^i+(\lambda_1\lambda_2)^i+\lambda_1^i+\lambda_2^i+(\lambda_1\lambda_2^{-1})^i+1 \\
& +1+(\lambda_1^{-1}\lambda_2)^i+\lambda_2^{-i}+\lambda_1^{-i}+(\lambda_1\lambda_2)^{-i}+(\lambda_1\lambda_2^{2})^{-i}+(\lambda_1^2\lambda_2)^{-i}.\\
\end{align*}

Using this notation, $\boldsymbol{x}^{(1)}=\boldsymbol{x}$ and $\boldsymbol{y}^{(1)}=\boldsymbol{y}.$ We remark that $\boldsymbol{x}^{(i)}$ gives the $i$th power sum (Definition \ref{elementary sums definition}) of the summands of $\boldsymbol{x},$ and similarly for $\boldsymbol{y}.$

\subsection{Polynomials in \texorpdfstring{$\mathcal{R}[x,y]$}{R[x,y]}}\label{Pn,Qn}\label{def polynomials}

Consider the polynomial ring in two variables $\mathcal{R}[x,y].$ We will now study some polynomials in this ring from a purely algebraic perspective before we return to studying the diagrammatically defined algebra $\mathcal{A},$ which was shown in Proposition \ref{A isomorphic} to be isomorphic to $\mathcal{R}[x,y].$

Define the polynomials $P_k(x,y)$ and $Q_k(x,y)$ in $\mathcal{R}[x,y]$ by the following recursive relations.

\begin{equation*}
P_k(x,y)=
\begin{cases}
7 & k=0\\
\sum_{i=1}^{k-1} (-1)^{i-1}e_iP_{k-i}(x,y)+(-1)^{k-1}ke_k & 0<k< 7\\
\sum_{i=1}^7 (-1)^{i-1}e_iP_{k-i}(x,y) & k\geq 7\\
\end{cases}
\end{equation*}

\begin{equation*}
Q_k(x,y)=
\begin{cases}
14 & k=0\\
\sum_{i=1}^{k-1} (-1)^{i-1}f_iQ_{k-i}(x,y)+(-1)^{k-1}kf_k & 0<k< 14\\
\sum_{i=1}^{14} (-1)^{i-1}f_iQ_{k-i}(x,y) & k\geq 14,\\
\end{cases}
\end{equation*}

where the coefficients $e_i:=e_i(x,y)$ and $f_i:=f_i(x,y)$ are given by the following:

\begin{align*}
e_0&=1 & f_0&=1\\
e_1&=x & f_1&=y\\
e_2&=x+y& f_2&=x^3 - x^2 - 2xy - x\\
e_3&=x^2-y & f_3&=x^4 - x^3 - 3x^2y - x^2 + 2y^2 + x + y\\
e_k&=e_{7-k} \text{ if } 0 \leq k \leq 7& f_4&=x^3y - x^3 - x^2y - 2xy^2 + x^2 + xy - y^2 + x + y\\
\end{align*}
 and
\begin{align*}
f_5&=x^5 - 2x^4 - 5x^3y + 3x^2y + 6xy^2 + y^3 + 2x^2 + 5xy + 2y^2 - x \\
f_6&=x^4 - 3x^3y + x^2y^2 - x^2y + 4xy^2 - 2x^2 + 3xy + 2y^2 + y \\
f_7&=-2x^5 + 4x^4 + 6x^3y + 2x^2y^2 + 2x^3 - 4x^2y - 8xy^2 - 2y^3 - 6x^2 - 6xy - 6y^2 + 2 \\
f_k&=f_{14-k}  \text{ if } 0 \leq k \leq 14\\
\end{align*}

The first few terms of each sequence of polynomials are as follows:

\begin{align*}
P_0(x,y)&=7 & Q_0(x,y)&=14\\
P_1(x,y)&=x & Q_1(x,y) &=y\\
P_2(x,y)&=x^2-2x-2y &Q_2(x,y)&=y^2-2x^3+2x^2+4xy+2x.\\
\end{align*}

\begin{definition}\label{elementary sums definition}(Elementary symmetric sums and power sums)
Given terms $t_1,...,t_s,$ the $i\text{-th}$ \textit{elementary symmetric sum} of the terms is defined to be

\begin{equation*}
\sum_{1\leq j_1 < j_2 < \cdots< j_i\leq s} t_{j_1}t_{j_2}\cdots t_{j_i},
\end{equation*}

while the $i\text{-th}$ \textit{power sum} of the terms is defined to be

\begin{equation*}
\sum_{j=1}^s t_j^i.
\end{equation*}

\end{definition}

The following proposition relates the coefficients defining $P_n$ and $Q_n$ to elementary symmetric sums of the summands of $\boldsymbol{x}$ and $\boldsymbol{y}.$ It can be verified by direct computation, which is made easier by use of mathematical software.

\begin{proposition}\label{elementary sums}
For each $0 \leq i \leq 7,$ the quantity $e_i(\boldsymbol{x}^{(1)},\boldsymbol{y}^{(1)})$ is equal to the $i\text{-th}$ elementary symmetric sum of the terms of $\boldsymbol{x}^{(1)}.$ Similarly, for each $0 \leq i \leq 14,$ the quantity $f_i(\boldsymbol{x}^{(1)},\boldsymbol{y}^{(1)})$ is equal to the $i\text{-th}$ elementary symmetric sum of the terms of $\boldsymbol{y}^{(1)}.$ 
\end{proposition}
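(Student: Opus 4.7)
The plan is a direct verification, since both statements reduce to a finite list of polynomial identities in $E = \mathcal{R}[\lambda_1^{\pm 1}, \lambda_2^{\pm 1}]$: one expands each $e_i(\boldsymbol{x},\boldsymbol{y})$ and $f_i(\boldsymbol{x},\boldsymbol{y})$ as a Laurent polynomial in $\lambda_1, \lambda_2$ and compares with the corresponding elementary symmetric sum of the summands of $\boldsymbol{x}^{(1)}$ or $\boldsymbol{y}^{(1)}$. The guiding principle is to exploit the inversion symmetry of the summands to cut the number of identities that need to be checked roughly in half, and to note that by the algebraic independence of $\boldsymbol{x}$ and $\boldsymbol{y}$ already established, each target polynomial in $\boldsymbol{x}, \boldsymbol{y}$ is uniquely determined by its expansion in $E$.

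For the first statement, let $t_1, \ldots, t_7$ be the seven summands of $\boldsymbol{x}^{(1)}$. A direct inspection shows the set $\{t_1,\ldots,t_7\}$ is closed under the involution $\lambda_i \mapsto \lambda_i^{-1}$, and the product $t_1 t_2 \cdots t_7$ telescopes to $1$. From these two facts it follows formally that $e_k(t_1,\ldots,t_7) = e_{7-k}(t_1,\ldots,t_7)$ for $0 \le k \le 7$, matching the defining symmetry $e_k = e_{7-k}$. Thus it suffices to verify the identities for $i = 0, 1, 2, 3$: the case $i = 0$ is trivial, $i = 1$ is exactly the definition of $\boldsymbol{x}$, and the cases $i = 2, 3$ are Laurent polynomial identities obtained by expanding the $21$ pairwise and $35$ triple products of the $t_j$ and checking that the totals coincide with $\boldsymbol{x} + \boldsymbol{y}$ and $\boldsymbol{x}^2 - \boldsymbol{y}$ respectively.

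The same strategy handles $\boldsymbol{y}^{(1)}$. Let $s_1, \ldots, s_{14}$ be its summands; one checks that $\{s_j\}$ is also closed under the involution $\lambda_i \mapsto \lambda_i^{-1}$ (the two ``$1$'' summands pair with each other) and that $s_1 \cdots s_{14} = 1$. This gives $f_k(s_1,\ldots,s_{14}) = f_{14-k}(s_1,\ldots,s_{14})$, matching the stated symmetry $f_k = f_{14-k}$. Consequently only the identities for $i = 0, 1, \ldots, 7$ need to be confirmed. Each one is a finite Laurent polynomial identity: expand $f_i(\boldsymbol{x}, \boldsymbol{y})$ using the explicit formulas defining the coefficients $f_i(x,y)$, and match it term-by-term against the $i$-th elementary symmetric sum of the $s_j$, which has $\binom{14}{i}$ monomial summands.

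The main obstacle here is not conceptual but combinatorial: the expansion of $f_7$, for instance, involves $\binom{14}{7} = 3432$ Laurent monomials that must be collected and compared with a specific polynomial of moderate degree in $\boldsymbol{x}$ and $\boldsymbol{y}$. The inversion symmetry halves the effective workload, but the remaining bookkeeping is what makes the authors' comment about mathematical software apt; the verification itself is a finite mechanical check with no further subtleties.
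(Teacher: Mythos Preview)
Your approach is correct and matches the paper's, which offers no argument beyond ``It can be verified by direct computation, which is made easier by use of mathematical software.'' Your observation that the multisets of summands are closed under $\lambda_i \mapsto \lambda_i^{-1}$ with total product $1$, forcing the elementary symmetric sums to satisfy the palindromic relations $e_k = e_{7-k}$ and $f_k = f_{14-k}$, is a clean organizing principle that the paper does not spell out; it genuinely cuts the list of identities in half and explains why those symmetries were imposed on the defining polynomials in the first place.
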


We can now relate our polynomials $P_k$ and $Q_k$ to the $k\text{-th}$ power sums of the terms of $\boldsymbol{x}$ and $\boldsymbol{y}.$

\begin{proposition}
For each $k \geq 0,$ the following holds:
\begin{align*}
P_k(\boldsymbol{x}^{(1)},\boldsymbol{y}^{(1)})&=\boldsymbol{x}^{(k)}\\
Q_k(\boldsymbol{x}^{(1)},\boldsymbol{y}^{(1)})&=\boldsymbol{y}^{(k)}.\\
\end{align*}
More generally, for each $k \geq 0$ and for each $i \geq 0,$

\begin{align*}
P_k(\boldsymbol{x}^{(i)},\boldsymbol{y}^{(i)})&=\boldsymbol{x}^{(ik)}\\
Q_k(\boldsymbol{x}^{(i)},\boldsymbol{y}^{(i)})&=\boldsymbol{y}^{(ik)}.\\
\end{align*}

\end{proposition}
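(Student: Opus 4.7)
The plan is to recognize the recursive definitions of $P_k$ and $Q_k$ as instances of the classical Newton identities relating power sums to elementary symmetric sums. Recall that for indeterminates $t_1,\ldots,t_s$ with $j$-th elementary symmetric sum $E_j$ and $k$-th power sum $p_k=\sum_\ell t_\ell^k$, Newton's identities assert
\[
p_k = \sum_{j=1}^{k-1}(-1)^{j-1}E_j\,p_{k-j} + (-1)^{k-1}k E_k \text{ for } 1 \le k \le s, \qquad p_k = \sum_{j=1}^{s}(-1)^{j-1}E_j\,p_{k-j} \text{ for } k > s,
\]
with initial value $p_0=s$. Setting $s=7$ and matching $E_j \leftrightarrow e_j$ reproduces exactly the recursion defining $P_k$; setting $s=14$ and $E_j \leftrightarrow f_j$ reproduces the recursion for $Q_k$.

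First I will treat the case $i=1$. By its very definition, $\boldsymbol{x}^{(k)}$ is the $k$-th power sum of the $7$ monomial summands of $\boldsymbol{x}^{(1)}$, and $\boldsymbol{y}^{(k)}$ is the $k$-th power sum of the $14$ monomial summands of $\boldsymbol{y}^{(1)}$. Proposition~\ref{elementary sums} identifies $e_j(\boldsymbol{x}^{(1)},\boldsymbol{y}^{(1)})$ with the $j$-th elementary symmetric sum of those $7$ terms, and $f_j(\boldsymbol{x}^{(1)},\boldsymbol{y}^{(1)})$ with the $j$-th elementary symmetric sum of the $14$ terms. Consequently Newton's identities force $\boldsymbol{x}^{(k)}$ and $\boldsymbol{y}^{(k)}$ to satisfy the same recursions as $P_k(\boldsymbol{x}^{(1)},\boldsymbol{y}^{(1)})$ and $Q_k(\boldsymbol{x}^{(1)},\boldsymbol{y}^{(1)})$ respectively; combined with the matching initial values $P_0=7=\boldsymbol{x}^{(0)}$ and $Q_0=14=\boldsymbol{y}^{(0)}$, a straightforward induction on $k$ yields $P_k(\boldsymbol{x}^{(1)},\boldsymbol{y}^{(1)})=\boldsymbol{x}^{(k)}$ and $Q_k(\boldsymbol{x}^{(1)},\boldsymbol{y}^{(1)})=\boldsymbol{y}^{(k)}$.

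To promote this to arbitrary $i \ge 0$, I will apply the $\mathcal{R}$-algebra endomorphism
\[
\phi_i \colon \mathcal{R}[\lambda_1^{\pm 1},\lambda_2^{\pm 1}] \to \mathcal{R}[\lambda_1^{\pm 1},\lambda_2^{\pm 1}], \qquad \lambda_1 \mapsto \lambda_1^i,\ \lambda_2 \mapsto \lambda_2^i.
\]
Direct inspection of the defining monomials shows $\phi_i(\boldsymbol{x}^{(1)})=\boldsymbol{x}^{(i)}$, $\phi_i(\boldsymbol{y}^{(1)})=\boldsymbol{y}^{(i)}$, and more generally $\phi_i(\boldsymbol{x}^{(k)})=\boldsymbol{x}^{(ik)}$ and $\phi_i(\boldsymbol{y}^{(k)})=\boldsymbol{y}^{(ik)}$. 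Since $P_k$ and $Q_k$ have coefficients in $\mathcal{R}$, the homomorphism $\phi_i$ commutes with polynomial evaluation, so applying $\phi_i$ to both sides of the identities established for $i=1$ immediately yields $P_k(\boldsymbol{x}^{(i)},\boldsymbol{y}^{(i)})=\boldsymbol{x}^{(ik)}$ and $Q_k(\boldsymbol{x}^{(i)},\boldsymbol{y}^{(i)})=\boldsymbol{y}^{(ik)}$.

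The only point where a little care is needed is the alignment of the two-branch recursion for $P_k$ (resp.\ $Q_k$) with the two-branch form of Newton's identity at the transition $k=7$ (resp.\ $k=14$): there the ``extra'' summand $(-1)^{s-1}sE_s$ from the $k\le s$ branch coincides with $(-1)^{s-1}E_s\,p_0$ from the $k>s$ branch precisely because $p_0=s$, so the two branches glue together consistently and the induction proceeds without interruption. I expect this minor bookkeeping to be the only non-mechanical step in the whole argument.
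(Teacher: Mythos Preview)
Your proof is correct and follows essentially the same approach as the paper: both arguments rest on Proposition~\ref{elementary sums} to match $e_j,f_j$ with elementary symmetric sums, then identify the defining recursion of $P_k,Q_k$ with the classical relation between power sums and elementary symmetric sums, and finally apply the endomorphism $\phi_i$ to pass from $i=1$ to general $i$. The only cosmetic difference is that the paper packages the first step via the Cayley--Hamilton theorem for a diagonal matrix whose entries are the summands of $\boldsymbol{x}^{(1)}$, whereas you invoke Newton's identities by name; these are two phrasings of the same fact.
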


\begin{proof}
We consider $P_k.$ The case of $Q_k$ is handled similarly.

By Proposition \ref{elementary sums}, the quantity $e_l(\boldsymbol{x}^{(1)},\boldsymbol{y}^{(1)})$ is equal to the $l\text{-th}$ elementary symmetric sum of the terms of $\boldsymbol{x}^{(1)}.$ Let $A$ be a 7-by-7 diagonal matrix such that each diagonal entry $A_{jj}$ is the $j\text{-th}$ summand of $\boldsymbol{x}^{(1)}.$ Each $e_l$ is a coefficient of the characteristic polynomial of $A.$ The Cayley-Hamilton theorem applied to the matrix $A^k$ then gives

\begin{align*}
P_k(\boldsymbol{x}^{(1)},\boldsymbol{y}^{(1)})&=\text{Tr}A^k\\
&=\boldsymbol{x}^{(k)}.
\end{align*}

Next, consider the algebra endomorphism $\phi_i:E \rightarrow E$ defined on the generators by $\phi_i(\lambda_1)=\lambda_1^i$ and $\phi_i(\lambda_2)=\lambda_2^i.$ Then $\phi_i(\boldsymbol{x}^{(1)})=\boldsymbol{x}^{(i)}$ and $\phi_i(\boldsymbol{y}^{(1)})=\boldsymbol{y}^{(i)}$.

We then use $\phi_i$ and the first part of the proposition to see that

\begin{align*}
P_k(\boldsymbol{x}^{(i)},\boldsymbol{y}^{(i)})&=P_k(\phi_i(\boldsymbol{x}^{(1)}),\phi_i(\boldsymbol{y}^{(1)}))\\
&=\phi_i(P_k(\boldsymbol{x}^{(1)},\boldsymbol{y}^{(1)}))\\
&=\phi_i(\boldsymbol{x}^{(k)})\\
&=\boldsymbol{x}^{(ik)}.\\
\end{align*}

\end{proof}

\begin{corollary}\label{power sum corollary}
For each $k \geq 0$ and for each $i \geq 0$ the following holds in $\mathcal{R}[x,y]:$

\begin{align*}
P_k(P_i(x,y),Q_i(x,y))&=P_{ik}(x,y)\\
Q_k(P_i(x,y),Q_i(x,y))&=Q_{ik}(x,y).\\
\end{align*}
\end{corollary}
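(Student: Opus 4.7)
The plan is to leverage the algebraic independence of $\boldsymbol{x}$ and $\boldsymbol{y}$ over $\mathcal{R}$ (established earlier in the paper) together with the preceding proposition to transport the identity from the ambient Laurent polynomial ring $E$ back to $\mathcal{R}[x,y]$.

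First I would observe that since $\boldsymbol{x}=\boldsymbol{x}^{(1)}$ and $\boldsymbol{y}=\boldsymbol{y}^{(1)}$ are algebraically independent over $\mathcal{R}$, the $\mathcal{R}$-algebra homomorphism
\begin{equation*}
\Phi\colon \mathcal{R}[x,y] \longrightarrow E, \qquad x \longmapsto \boldsymbol{x}^{(1)},\ y \longmapsto \boldsymbol{y}^{(1)}
\end{equation*}
is injective. So to prove an equality in $\mathcal{R}[x,y]$, it suffices to prove the equality of the images under $\Phi$.

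Next I would compute both sides of each asserted identity under $\Phi$, using the preceding proposition twice. On the one hand, $\Phi(P_i(x,y)) = P_i(\boldsymbol{x}^{(1)},\boldsymbol{y}^{(1)}) = \boldsymbol{x}^{(i)}$ and $\Phi(Q_i(x,y)) = Q_i(\boldsymbol{x}^{(1)},\boldsymbol{y}^{(1)}) = \boldsymbol{y}^{(i)}$, so
\begin{equation*}
\Phi\bigl(P_k(P_i(x,y),Q_i(x,y))\bigr) \;=\; P_k(\boldsymbol{x}^{(i)},\boldsymbol{y}^{(i)}) \;=\; \boldsymbol{x}^{(ik)}
\end{equation*}
by the second part of the preceding proposition. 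On the other hand, $\Phi(P_{ik}(x,y)) = P_{ik}(\boldsymbol{x}^{(1)},\boldsymbol{y}^{(1)}) = \boldsymbol{x}^{(ik)}$ by the first part. An identical argument handles the $Q_k$ identity. Since $\Phi$ is injective, the equalities in $E$ pull back to the desired equalities in $\mathcal{R}[x,y]$.

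There is no serious obstacle here: the corollary is essentially a formal consequence of the previous proposition plus algebraic independence. The only thing one needs to be a little careful about is invoking algebraic independence in the right form, namely that $\Phi$ is \emph{injective} on all of $\mathcal{R}[x,y]$ (not just that $\boldsymbol{x},\boldsymbol{y}$ satisfy no polynomial relation), which is of course equivalent since $\mathcal{R}[x,y]$ is a polynomial ring in two independent generators.
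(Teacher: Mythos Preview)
Your proof is correct and is essentially identical to the paper's own argument: both introduce the injective algebra map $\mathcal{R}[x,y]\to E$ sending $x\mapsto\boldsymbol{x}^{(1)}$, $y\mapsto\boldsymbol{y}^{(1)}$, evaluate both sides via the preceding proposition, and conclude by injectivity. The only difference is notational (you call the map $\Phi$, the paper calls it $\psi$).
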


\begin{proof}
Since $\boldsymbol{x}^{(1)}$ and $\boldsymbol{y}^{(1)}$ are algebraically independent, there is an injective algebra homomorphism $\psi: \mathcal{R}[x,y] \rightarrow E$ defined on generators by $x \mapsto \boldsymbol{x}^{(1)}$ and $y \mapsto \boldsymbol{y}^{(1)}.$
Applying $\psi$ to the left side of the first equation in the proposition yields

\begin{align*}
\psi(P_k(P_i(x,y),Q_i(x,y)))&=P_k(P_i(\boldsymbol{x}^{(1)},\boldsymbol{y}^{(1)}),Q_i(\boldsymbol{x}^{(1)},\boldsymbol{y}^{(1)}))\\
&=P_k(\boldsymbol{x}^{(i)},\boldsymbol{y}^{(i)})\\
&=P_{ki}(\boldsymbol{x}^{(1)},\boldsymbol{y}^{(1)}).\\
\end{align*}
Applying $\psi$ to the right side of the first equation in the proposition yields the same answer. The injectivity of $\psi$ then proves the proposition for $P_k.$ The case of $Q_k$ is proven similarly.
\end{proof}

\begin{definition}
For a monomial $x^iy^j \in \mathcal{R}[x,y],$ we define its bidegree to be
\begin{equation*}
D_2(x^iy^j)=(i+2j,i+j) \in \mathbb{Z}_{\geq 0}.
\end{equation*}

Using a lexicographic ordering of bidegrees, we extend $D_2$ to $\mathcal{R}[x,y]$ by defining for an element $p=\sum a_{ij}x^iy^j \in \mathcal{R}[x,y]$ its bidegree

\begin{equation*}
D_2(p)=\max \{D_2(x^iy^j) \mid a_{ij} \neq 0\}.
\end{equation*}
\end{definition}

The bidegree satisfies the following compatibility with multiplication in $\mathcal{R}[x,y].$

\begin{proposition}
For any two polynomials $S(x,y),T(x,y) \in \mathcal{R}(x,y)$ we have
\begin{equation*}
D_2(S(x,y)T(x,y))=D_2(S(x,y))+D_2(T(x,y)).
\end{equation*}
\end{proposition}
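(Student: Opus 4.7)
The plan is a top-term argument. The key structural fact I would establish first is that the assignment $(i,j)\mapsto (i+2j,i+j)$ on $\mathbb{Z}_{\geq 0}^2$ is injective, since the matrix $\begin{pmatrix}1 & 2\\ 1 & 1\end{pmatrix}$ has determinant $-1$. Consequently distinct monomials $x^iy^j$ have distinct bidegrees $D_2$, so for any nonzero $p=\sum a_{ij}x^iy^j$ there is a unique monomial realizing $D_2(p)$, and it carries a well-defined nonzero leading coefficient.

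Given this, I would identify the leading monomials $x^{i_0}y^{j_0}$ of $S$ (coefficient $a_0\neq 0$) and $x^{m_0}y^{n_0}$ of $T$ (coefficient $b_0\neq 0$), and zero in on the term $a_0b_0\,x^{i_0+m_0}y^{j_0+n_0}$ that appears in $ST$. The monomial-level additivity
\[
D_2(x^iy^j\cdot x^my^n)=D_2(x^iy^j)+D_2(x^my^n)
\]
is immediate from the explicit formula. Combined with maximality of $(i_0,j_0)$ and $(m_0,n_0)$, this shows that every other monomial product appearing in $ST$ has bidegree $\leq D_2(S)+D_2(T)$, with equality only when both factors are at top bidegree. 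By the injectivity established above, equality forces $(i,j)=(i_0,j_0)$ and $(m,n)=(m_0,n_0)$, so no other summand contributes to the coefficient of $x^{i_0+m_0}y^{j_0+n_0}$. Hence that coefficient is exactly $a_0b_0$, and no monomial of strictly larger bidegree appears in $ST$.

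Finally, because $\mathcal{R}$ is an integral domain we have $a_0b_0\neq 0$, so the top term does not vanish and $D_2(ST)=D_2(S)+D_2(T)$. The only nontrivial step is the no-cancellation claim at the top bidegree, which is controlled entirely by the injectivity of the bidegree map on monomials together with the integral-domain hypothesis. As an alternative route, one could transport the statement along the injective $\mathcal{R}$-algebra homomorphism $\psi\colon \mathcal{R}[x,y]\to E$ sending $x\mapsto\boldsymbol{x}$ and $y\mapsto\boldsymbol{y}$ and invoke the already-stated multiplicativity of $d_2$ on $E$, after verifying $D_2(p)=d_2(\psi(p))$; but this verification is itself just the same injectivity-plus-leading-term argument, so the direct approach seems cleanest.
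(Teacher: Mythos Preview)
Your argument is correct. The paper does not supply a proof of this proposition; it is stated as a standard fact, parallel to the earlier proposition for $d_1$ and $d_2$ on $E$, which is likewise asserted without proof after the remark ``Since $\mathcal{R}$ is an integral domain, the degrees satisfy the following property.'' Your write-up simply fills in the details the authors left implicit: injectivity of $(i,j)\mapsto(i+2j,i+j)$ gives a unique top monomial, compatibility of the lexicographic order with addition gives $D_2$-subadditivity on monomial products with equality only at the top pair, and the integral-domain hypothesis prevents cancellation of the leading coefficient. This is exactly the routine leading-term argument one would expect, and your alternative via $\psi$ and $d_2$ is also sound (and indeed the paper's later proposition $D_2(x^iy^j)=d_2(\boldsymbol{x}^{(i)}\boldsymbol{y}^{(j)})$ is in the same spirit, though not quite the identity $D_2(p)=d_2(\psi(p))$ you would need).
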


The following propositions provide support for the unusual choice of the bidegree $D_2.$

\begin{proposition}
For $k \geq 1,$ the polynomials $P_k(x,y)$ and $Q_k(x,y)$ are monic polynomials in $\mathcal{R}[x,y].$ Their bidegrees are $D_2(P_k)=(k,k)$ and $D_2(Q_k)=(2k,k).$
\end{proposition}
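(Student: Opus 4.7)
The plan is to transport the bidegree computation from $\mathcal{R}[x,y]$ into the Laurent polynomial ring $E = \mathcal{R}[\lambda_1^{\pm 1},\lambda_2^{\pm 1}]$ via the injective algebra homomorphism $\psi\colon \mathcal{R}[x,y]\to E$ given by $x\mapsto \boldsymbol{x}^{(1)}$ and $y\mapsto \boldsymbol{y}^{(1)}$ introduced in the proof of Corollary \ref{power sum corollary}. That proposition already identifies $\psi(P_k)=\boldsymbol{x}^{(k)}$ and $\psi(Q_k)=\boldsymbol{y}^{(k)}$, so the statements about $D_2$ and monicness reduce to reading off the lex-top monomials of these two explicit Laurent polynomials.

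The first step is to show that $\psi$ intertwines the two bidegrees: for every nonzero $p\in\mathcal{R}[x,y]$, $d_2(\psi(p))=D_2(p)$ and the leading coefficient is preserved. Inspecting the displayed formulas for $\boldsymbol{x}$ and $\boldsymbol{y}$ gives $d_2(\boldsymbol{x})=(1,1)$, attained uniquely by $\lambda_1\lambda_2$ with coefficient $1$, and $d_2(\boldsymbol{y})=(2,1)$, attained uniquely by $\lambda_1^2\lambda_2$ with coefficient $1$. By multiplicativity of $d_2$, this yields $d_2(\psi(x^iy^j)) = i\cdot(1,1)+j\cdot(2,1) = (i+2j,\,i+j) = D_2(x^iy^j)$. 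Since the map $(i,j)\mapsto(i+2j,\,i+j)$ is injective on $\mathbb{Z}_{\geq 0}^2$, the top-bidegree terms coming from distinct monomials of $p$ land in distinct bidegrees of $E$, so no cancellation is possible at the top and both the top bidegree and its coefficient are preserved.

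The second step is to evaluate $d_2(\boldsymbol{x}^{(k)})$ and $d_2(\boldsymbol{y}^{(k)})$ directly from the defining expressions. Among the seven summands of $\boldsymbol{x}^{(k)}$, the lex-maximum bidegree is $(k,k)$, attained only by $(\lambda_1\lambda_2)^k$ with coefficient $1$. Among the fourteen summands of $\boldsymbol{y}^{(k)}$, the lex-maximum bidegree is $(2k,k)$, attained only by $(\lambda_1^2\lambda_2)^k$, again with coefficient $1$. Combined with the first step, this gives $D_2(P_k)=(k,k)$ and $D_2(Q_k)=(2k,k)$, each with leading coefficient $1$, i.e.\ monic. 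Moreover the unique top monomials of $P_k$ and $Q_k$ are $x^k$ and $y^k$ respectively, as these are the only solutions in $\mathbb{Z}_{\geq 0}^2$ of $i+2j=k$, $i+j=k$ and of $i+2j=2k$, $i+j=k$.

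The only real obstacle is the brief finite comparison required in the second step, namely verifying that the claimed terms actually attain the lex-maximum among the seven (respectively fourteen) listed Laurent monomials. This amounts to ranking a small explicit list of integer pairs and presents no essential difficulty.
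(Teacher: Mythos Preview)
Your argument is correct and takes a genuinely different route from the paper.

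The paper argues by induction directly from the recursive definition: one checks that in $P_k = e_1 P_{k-1} - e_2 P_{k-2} + \cdots$ the term $e_1 P_{k-1} = x P_{k-1}$ strictly dominates all the others in bidegree (and similarly $f_1 Q_{k-1} = y Q_{k-1}$ dominates for $Q_k$), so the leading term is $x^k$, respectively $y^k$. This is self-contained but requires inspecting the bidegrees of all seven $e_i$ and all fourteen $f_i$ to confirm the domination.

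You instead pull back along $\psi$ and use the already established identity $\psi(P_k)=\boldsymbol{x}^{(k)}$, $\psi(Q_k)=\boldsymbol{y}^{(k)}$. The key observation that $\psi$ preserves the top bidegree and its coefficient (because the images of distinct monomials $x^iy^j$ have distinct lex-top monomials in $E$) is exactly the content that later appears in the paper as the separate proposition $D_2(x^iy^j)=d_2(\boldsymbol{x}^{(i)}\boldsymbol{y}^{(j)})$. Your approach thus bypasses the inspection of the $e_i$ and $f_i$, replaces it with the easier task of ranking seven (resp.\ fourteen) explicit lattice points, and simultaneously explains why $D_2$ is defined as it is. The trade-off is a logical dependence on the earlier power-sum proposition, whereas the paper's proof is independent of that result.
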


\begin{proof}
By inspecting the coefficients $e_i$ and $f_i$ in the recursive definitions of $P_k$ and $Q_k$ we observe inductively that $P_k=xP_{k-1}+\text{ (lower terms) }$ and $Q_k=yQ_{k-1}+ \text{ (lower terms)}$. Thus, $D_2(P_k)=D_2(x)+D_2(P_{k-1})=(1,1)+D_2(P_{k-1})$ and $D_2(Q_k)=D_2(y)+D_2(Q_{k-1})=(2,1)+D_2(Q_{k-1}).$ The result follows by induction.
\end{proof}

Thus, $P_0$ and $Q_0$ are the only non-monic polynomials in our sequences of polynomials. In the following proposition we renormalize $P_0$ and $Q_0$ so that $P_0(x,y)=1=Q_0(x,y)$ (but the other polynomials stay unchanged).

\begin{proposition}\label{P,Q basis}
The set of products of polynomials
\begin{equation*}
\{P_k(x,y)Q_l(x,y)\}_{k,l\geq 0}
\end{equation*}
forms a basis for the 2-variable polynomial ring $\mathcal{R}[x,y].$
\end{proposition}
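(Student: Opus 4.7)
The strategy is to show that $\{P_k(x,y)Q_l(x,y)\}_{k,l\geq 0}$ and the monomial basis $\{x^k y^l\}_{k,l\geq 0}$ are related by a unitriangular change of basis with respect to the bidegree order $D_2$. Since any triangular-with-units transition matrix is invertible, this will establish both spanning and linear independence simultaneously.

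The first step is to pin down the leading term of $P_k Q_l$. By the multiplicativity proposition for $D_2$ together with the immediately preceding proposition, $D_2(P_k Q_l) = D_2(P_k) + D_2(Q_l) = (k,k) + (2l,l) = (k+2l, k+l)$. Now the monomial $x^i y^j$ has bidegree $(i+2j, i+j)$, and the system $i+2j = k+2l$, $i+j = k+l$ forces $(i,j) = (k,l)$; hence the only monomial of bidegree $(k+2l, k+l)$ is $x^k y^l$. Since $P_k$ and $Q_l$ are monic with leading monomials $x^k$ and $y^l$ respectively, their product is monic with leading monomial $x^k y^l$, giving the triangular formula
$$P_k(x,y)\, Q_l(x,y) \;=\; x^k y^l \;+\; \sum_{D_2(x^i y^j)\,<\,D_2(x^k y^l)} a_{ij}^{kl}\, x^i y^j,$$
where the sum is over monomials of strictly smaller bidegree.

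The second step is purely formal. The map $(k,l) \mapsto (k+2l, k+l)$ is injective on $\mathbb{Z}_{\geq 0}^2$, so distinct pairs $(k,l)$ yield distinct leading monomials $x^k y^l$. Ordering $\mathbb{Z}_{\geq 0}^2$ by the associated bidegree, the change-of-basis matrix from $\{P_k Q_l\}$ to $\{x^k y^l\}$ is unitriangular and hence invertible. Equivalently, one can argue directly by induction on bidegree: given $f \in \mathcal{R}[x,y]$ nonzero with leading monomial $c\, x^k y^l$, the polynomial $f - c\, P_k Q_l$ has strictly smaller $D_2$, so spanning follows by induction; linear independence follows because the top-bidegree term of any nontrivial $\mathcal{R}$-linear combination $\sum b_{kl} P_k Q_l$ cannot cancel.

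The only substantive ingredient is the leading-term calculation, and this is immediate from the previously established fact that $P_k$ and $Q_l$ are monic with the stated bidegrees; no new combinatorics of the recursions for $P_k, Q_l$ is required. Thus I expect no serious obstacle — the entire argument is a short unitriangularity argument riding on the bidegree data already established.
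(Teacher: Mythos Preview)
Your proof is correct and follows essentially the same approach as the paper: both compute $D_2(P_kQ_l)=(k+2l,k+l)=D_2(x^ky^l)$, observe that this assignment is a bijection on $\mathbb{Z}_{\geq 0}^2$, and conclude via a unitriangular change of basis from the monomial basis. You spell out a few more details (the injectivity of $(k,l)\mapsto(k+2l,k+l)$ and the induction), but the argument is the same.
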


\begin{proof}
We observe that each $P_kQ_l$ is a monic polynomial of $\mathcal{R}[x,y]$ of distinct bidegree $D_2(P_kQ_l)=D_2(P_k)+D_2(Q_l)=(k,k)+(2l,l)=(k+2l,k+l).$ The standard basis $\{x^ky^l\}$ of $\mathcal{R}[x,y]$ also consists of monic polynomials of distinct bidegrees $D_2(x^ky^l)=(k+2l,k+l).$ Thus, there is a unitriangular change of basis from the standard basis $\{x^ky^l\}_{k,l \geq 0}$ to $\{P_k(x,y)Q_l(x,y)\}_{k,l\geq 0}.$
\end{proof}

\begin{proposition}
For $i,j \geq 0$ we have

\begin{equation*}
D_2(x^iy^j)=d_2(\boldsymbol{x}^{(i)}\boldsymbol{y}^{(j)}).
\end{equation*}
\end{proposition}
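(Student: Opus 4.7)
The plan is a direct calculation of both sides using the multiplicativity of $d_2$ on products (from the proposition earlier), combined with a careful identification of the leading monomial (with respect to the lexicographic order on bidegrees) in each of $\boldsymbol{x}^{(i)}$ and $\boldsymbol{y}^{(j)}$.

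First I would compute $d_2(\boldsymbol{x}^{(i)})$ directly from the definition of $\boldsymbol{x}^{(i)}$. Its seven summands have bidegrees $(i,0), (0,i), (i,i), (0,0), (-i,-i), (0,-i), (-i,0)$. Under the lex order, the largest of these is clearly $(i,i)$, attained by the term $(\lambda_1\lambda_2)^i$. Hence $d_2(\boldsymbol{x}^{(i)}) = (i,i)$ for $i \geq 1$. Similarly, the fourteen summands of $\boldsymbol{y}^{(j)}$ have bidegrees whose lex-largest entry is $(2j,j)$, attained by $(\lambda_1^2\lambda_2)^j$, giving $d_2(\boldsymbol{y}^{(j)}) = (2j,j)$ for $j \geq 1$.

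Next, by the multiplicativity property $d_2(p_1 p_2) = d_2(p_1) + d_2(p_2)$ proved earlier (which uses that $\mathcal{R}$ is an integral domain so the leading monomials do not cancel), we get
\begin{equation*}
d_2(\boldsymbol{x}^{(i)}\boldsymbol{y}^{(j)}) = (i,i) + (2j,j) = (i+2j,\, i+j) = D_2(x^i y^j),
\end{equation*}
which is the required equality.

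Finally, I would dispose of the boundary cases $i=0$ and $j=0$: here $\boldsymbol{x}^{(0)} = 7$ and $\boldsymbol{y}^{(0)} = 14$ are nonzero scalars in the integral domain $\mathcal{R}$, so they contribute bidegree $(0,0)$ and the identity $d_2(\boldsymbol{x}^{(i)}\boldsymbol{y}^{(j)}) = (i+2j,\, i+j)$ still matches $D_2(x^i y^j)$ in these degenerate cases. There is no substantive obstacle here; the only thing to verify carefully is that in the product $\boldsymbol{x}^{(i)}\boldsymbol{y}^{(j)}$ the leading monomial $(\lambda_1\lambda_2)^i \cdot (\lambda_1^2\lambda_2)^j$ genuinely survives as the lex-top term, which follows from both factors being monic (coefficient $1$) in their leading bidegree and from $\mathcal{R}$ being an integral domain.
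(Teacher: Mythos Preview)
Your proof is correct and follows essentially the same approach as the paper: compute $d_2(\boldsymbol{x}^{(i)})=(i,i)$ and $d_2(\boldsymbol{y}^{(j)})=(2j,j)$, invoke the multiplicativity of $d_2$, and compare with $D_2(x^iy^j)=(i+2j,i+j)$. The paper's version is terser and also cites the multiplicativity of $D_2$ to split $D_2(x^iy^j)=D_2(x^i)+D_2(y^j)$, whereas you read off $D_2(x^iy^j)$ directly from its definition; either way the content is the same.
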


\begin{proof}
This follows from the compatibility of $D_2$ and $d_2$ with multiplication and the following observation:

\begin{align*}
D_2(x^i)&=(i,i)=d_2(\boldsymbol{x}^{(i)})\\
D_2(y^j)&=(2j,j)=d_2(\boldsymbol{y}^{(j)}).
\end{align*}
\end{proof}

\section{Transparent elements}

In this section we connect the algebra $E'$ to the algebra $\mathcal{A}_{1,1}$ and use the relationship to show that the polynomials $P_n$ and $Q_n$ are transparent when $q^{2n}=1.$

\subsection{The maps \texorpdfstring{$F^\star$}{F*} and \texorpdfstring{$F_\star$}{F*}}

We next define the maps $F^\star$ and $F_\star,$ which are the key ingredients in connecting our algebra $E'=\mathcal{R}[\lambda_1+\lambda_2,(\lambda_1\lambda_2)^{\pm 1}]$ and our diagrammatic algebra $\mathcal{A}_{1,1}.$

\begin{definition} \label{F star}
Below we define algebra maps $F^\star, F_\star: E' \rightarrow \mathcal{A}_{1,1}$ on generators.

\begin{align*}
F^\star(\lambda_1\lambda_2)&=q^2a & F_\star(\lambda_1\lambda_2)&=q^{-2}a\\
F^\star(\lambda_1+\lambda_2)&=\frac{q}{[2]}(c-a-1)& F_\star(\lambda_1+\lambda_2)&=\frac{q^{-1}}{[2]}(c-a-1)\\
\end{align*}
\end{definition}

In particular, the map $F_\star$ is obtained from the map $F^\star$ by replacing $q$ with $q^{-1}.$ The following proposition provides the precise relationship between our distinguished elements of $E'$ and our distinguished elements of $\mathcal{A}_{1,1}.$

\begin{proposition}\label{y star computation}
$F^\star$ and $F_\star: E' \rightarrow \mathcal{A}_{1,1}$ are well-defined algebra homomorphisms and satisfy the following
\begin{align*}
F^\star(\boldsymbol{x}^{(1)})&=x^\star & F^\star(\boldsymbol{y}^{(1)})&=\bar{y}:=y^\star + \frac{1}{[2]^2} f\\
F_\star(\boldsymbol{x}^{(1)})&=x_\star & F^\star(\boldsymbol{y}^{(1)})&=\underline{y}:=y_\star +\frac{1}{[2]^2}f.\\
\end{align*}
\end{proposition}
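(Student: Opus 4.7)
The plan is to verify the well-definedness statement and the four identities by combining algebraic manipulation in $E'$ with diagrammatic resolution in $\mathcal{A}_{1,1}$.

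For well-definedness: since $\lambda_1 + \lambda_2$ and $\lambda_1 \lambda_2$ are algebraically independent over $\mathcal{R}$, the algebra $E'$ is the Laurent polynomial ring on these two generators. To specify an algebra homomorphism out of $E'$ it therefore suffices to pick commuting images whose image of $\lambda_1\lambda_2$ is invertible. Corollary~\ref{A11 commutative} guarantees $\mathcal{A}_{1,1}$ is commutative, and $a$ is invertible, so $F^\star(\lambda_1\lambda_2) = q^2 a$ is invertible; thus $F^\star$ is a well-defined algebra homomorphism. The case of $F_\star$ is identical with $q$ replaced by $q^{-1}$.

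Next I would establish $F^\star(\boldsymbol{x}^{(1)}) = x^\star$. On the algebraic side, I would use $\lambda_1^{-1} + \lambda_2^{-1} = (\lambda_1\lambda_2)^{-1}(\lambda_1 + \lambda_2)$ to rewrite $\boldsymbol{x}^{(1)} = \alpha + 1 + \alpha^{-1} + \beta(1 + \alpha^{-1})$, where $\alpha = \lambda_1\lambda_2$ and $\beta = \lambda_1+\lambda_2$. Applying $F^\star$ and collecting terms produces a specific $\mathcal{R}$-linear combination of $a^{\pm 1}, c,$ and $a^{-1}c$. On the diagrammatic side, $x^\star$ is the single-strand loop drawn over the identity strand of the annulus, with exactly one crossing; resolving this crossing via relation~(\ref{1crossing}) yields four terms. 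Each resulting web is identified, after isotopy and an application of Lemma~\ref{MCG}, with a basis element from Proposition~\ref{A1,1 basis}: the two smoothing terms give single strands from $p_0$ to $p_1$ wrapping once around the annulus in opposite directions, i.e.\ $a$ and $a^{-1}$, while the two trivalent-stick terms reduce to monomials in $c$ times a power of $a$. Matching coefficients completes the proof. The identity $F_\star(\boldsymbol{x}^{(1)}) = x_\star$ then follows from the mirror computation obtained by swapping $q \leftrightarrow q^{-1}$ on both sides (the sign of the crossing on the diagrammatic side and the substitution $q \mapsto q^{-1}$ in $F_\star$).

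The verification $F^\star(\boldsymbol{y}^{(1)}) = \bar{y}$ is analogous but with more bookkeeping. Using $\boldsymbol{y} = e_2 - \boldsymbol{x}$, where $e_2$ is the second elementary symmetric sum of the seven summands of $\boldsymbol{x}$, together with the Newton identity $2e_2 = \boldsymbol{x}^2 - \boldsymbol{x}^{(2)}$, I would rewrite $\boldsymbol{y}^{(1)}$ in terms of $\alpha$ and $\beta$ and compute $F^\star(\boldsymbol{y}^{(1)})$ by expansion. Diagrammatically, $y^\star$ is a double-strand loop drawn over the identity strand with a single mixed crossing. Resolving this crossing uses the mixed single-over-double crossing relation, whose three terms include a contribution carrying the coefficient $\frac{1}{[2]^2}$ and a small internal double-edge square. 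After isotopy and the self-folded square reduction~(\ref{fake square}), this term contributes precisely $-\frac{1}{[2]^2} f$, so the algebraic computation matches $y^\star + \frac{1}{[2]^2} f = \bar{y}$ rather than $y^\star$ itself, producing the error term flagged in the introduction. The other two resolution terms reduce, via the defining relations and isotopy, to elements expressible in $a^{\pm 1}$ and $c$, matching the algebraic side. The case $F_\star(\boldsymbol{y}^{(1)}) = \underline{y}$ is again the mirror calculation with $q \mapsto q^{-1}$.

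The main obstacle is the diagrammatic reduction in the $y$-case: the webs produced by resolving the mixed crossing must be carefully simplified to the basis $\{a^i c^j\}\cup\{f_{ij}\}$ using the double-edge removal relation~(\ref{double edge removal}) and the fake square relation~(\ref{fake square}), with mapping-class-group tracking via Lemma~\ref{MCG} to identify the correct power of $a$. The emergence of the $\frac{1}{[2]^2}f$ correction is the key difference from the $SL_d$ setting in~\cite{BH23}, and it is what motivates the definition of $\bar{y}$ (and hence the more elaborate construction of the polynomials $P_n$ and $Q_n$ in later sections).
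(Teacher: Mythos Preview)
Your overall strategy matches the paper's exactly: verify well-definedness via commutativity of $\mathcal{A}_{1,1}$ and invertibility of $a$, resolve the crossing in $x^\star$ by relation~(\ref{1crossing}) and match against the direct expansion of $F^\star(\boldsymbol{x}^{(1)})$, do the analogous (longer) computation for $y^\star$, and obtain the $F_\star$ statements by the $q\leftrightarrow q^{-1}$ mirror. Two details in your $y$-case sketch need correcting. First, the relation that produces the $-f$ contribution is the double-edge removal relation~(\ref{double edge removal}), not the self-folded square relation~(\ref{fake square}); after resolving the mixed crossing you get three diagrams each containing an internal double edge, and the paper applies~(\ref{double edge removal}) to all three. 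Second, the square term from the crossing resolution does not contribute ``precisely $-\tfrac{1}{[2]^2}f$'': it contributes $-f$ together with several other basis elements (a bigon, an $a^{-1}c^2$, and a single-strand square that is further reduced), and the other two resolution terms likewise produce multiple basis elements after double-edge removal. Also, your suggested use of the Newton identity $2e_2=\boldsymbol{x}^2-\boldsymbol{x}^{(2)}$ would require $2$ to be invertible in $\mathcal{R}$, which is not assumed; the paper (and you can too) simply expands $F^\star(\boldsymbol{y}^{(1)})$ directly from the fourteen summands grouped into elements of $E'$.
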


\begin{proof}
We will prove the statement for $F^\star.$ The statement for $F_\star$ is proved by replacing $q$ with $q^{-1}.$ As $\lambda_1+\lambda_2$ and $(\lambda_1\lambda_2)^{\pm 1}$ generate $E'=\mathcal{R}[\lambda_1+\lambda_2,(\lambda_1\lambda_2)^{\pm 1}],$ we see that $F^\star$ is a well-defined algebra homomorphism by noting that $q^2a$ is an invertible element of $\mathcal{A}_{1,1}$ and that $\mathcal{A}_{1,1}$ is a commutative algebra.

To check that $F^\star(\boldsymbol{x}^{(1)})=x^\star,$ we compute diagrammatically that

\begin{align*}
x^\star&=\xusebox{xUpStar}\\
&=\frac{1}{[2]}\left( q^3\xusebox{A11--a}+q^{-3}\xusebox{A11--aInv}+q\xusebox{A11--c}+q^{-1}\xusebox{A11--aInvc}\right)\\
&=\frac{1}{[2]}(q^3a+q^{-3}a^{-1}+qc+q^{-1}a^{-1}c).\\
\end{align*}

On the other hand, we compute that

\begin{align*}
F^\star(\boldsymbol{x}^{(1)})=& F^\star(\lambda_1+\lambda_2)+F^\star(\lambda_1\lambda_2)+F^\star((\lambda_1\lambda_2)^{-1})+F^\star(1)\\
&+F^\star((\lambda_1\lambda_2)^{-1}(\lambda_1+\lambda_2))\\
=&\frac{q}{[2]}(c-a-1)+q^2a+q^{-2}a^{-1}+1+q^{-2}a^{-1}\frac{q}{[2]}(c-a-1)\\
=&\frac{1}{[2]}(q^3a+q^{-3}a^{-1}+qc+q^{-1}a^{-1}c).\\
\end{align*}

Therefore, $F^\star(\boldsymbol{x}^{(1)})=x^\star.$

Now, to check that $F^\star(\boldsymbol{y}^{(1)})=\bar{y}=y^\star + \frac{1}{[2]^2} f,$ we compute diagrammatically that

\begin{align}
y^\star=&\xusebox{yUpStar} \nonumber \\
=&\frac{1}{[2]}\left( q^3\xusebox{A11--yc} +q^{-3}\xusebox{A11--yInvc}+\frac{1}{[2]}\xusebox{A11--yBox}\right). \label{y star}
\end{align}

Next, we apply the double-edge removal relation to all three diagrams and then continue to fully simplify. The following is the calculation for the third diagram of (\ref{y star}).

\begin{align*}
\xusebox{A11--yBox}&=-\xusebox{A11--f}+\frac{[4][6]}{[2][12]} \xusebox{A11--2gon}+\xusebox{A11--aInvc2}+\frac{1}{[3]}\xusebox{A11--xBox}\\
&=-f-\frac{[6][3][8]}{[2][12]}1+a^{-1}c^2+\frac{1}{[3]}([3](a+a^{-1})-\frac{[4]}{[2]}(c+a^{-1}c)).
\end{align*}

The other two diagrams of (\ref{y star}) are reduced using the same process and we record the results:

\begin{align*}
\xusebox{A11--yc}&=-a^2+\frac{[4][6]}{[2][12]}1+ac+\frac{1}{[3]}c\\
\xusebox{A11--yInvc}&=-a^{-2}+\frac{[4][6]}{[2][12]}1+a^{-2}c+\frac{1}{[3]}a^{-1}c.
\end{align*}

Combining these computations and substituting them into Equation (\ref{y star}), we write $y^\star$ in our basis for $\mathcal{A}_{1,1}$ as

\begin{align*}
y^\star=&\frac{1}{[2]} \left( -q^3 a^2 - q^{-3} a^{-2} + q^3 ac + q^{-3} a^{-2} c + \frac{1}{[2]} a^{-1} c^2 \right.\\
    &\left. + \frac{1}{[2]} a + \frac{1}{[2]} a^{-1} + \frac{q^2-1}{[2]} c + \frac{q^{-2}-1}{[2]} a^{-1} c - \frac{q^2+q^{-2}}{[2]} 1-\frac{1}{[2]}f \right).
\end{align*}

On the other hand, we compute

\begin{align*}
F^\star(\boldsymbol{y}^{(1)})=&F^*(\lambda_1 + \lambda_2) + F^*(\lambda_1^{-1} + \lambda_2^{-1}) + F^*(\lambda_1\lambda_2) + F^*(\lambda_1^{-1} \lambda_2^{-1}) + F^*(\lambda_1\lambda_2 (\lambda_1 + \lambda_2)) \\
&+ F^*(\lambda_1^{-1}\lambda_2^{-1} (\lambda_1^{-1} + \lambda_2^{-1})) + F^*((\lambda_1 + \lambda_2)(\lambda_1^{-1} + \lambda_2^{-1}))\\
=& q\frac{c-a-1}{[2]} + q^{-1}\frac{a^{-1}(c-a-1)}{[2]} + q^2 a + q^{-2} a^{-1} + q^2 a \left( q \frac{c-a-1}{[2]} \right) \\
&+ q^{-2}a^{-1}\left( q^{-1} \frac{a^{-1}(c-a-1)}{[2]} \right) + \left( q \frac{c-a-1}{[2]} \right) \left( q^{-1} \frac{a^{-1}(c-a-1)}{[2]} \right)\\
=&\frac{1}{[2]} \left( -q^3 a^2 - q^{-3} a^{-2} + q^3 ac + q^{-3} a^{-2} c + \frac{1}{[2]} a^{-1} c^2 \right.\\
&\left. + \frac{1}{[2]} a + \frac{1}{[2]} a^{-1} + \frac{q^2-1}{[2]} c + \frac{q^{-2}-1}{[2]} a^{-1} c - \frac{q^2+q^{-2}}{[2]} 1 \right).
\end{align*}

Thus, $F^\star(\boldsymbol{y}^{(1)})=y^\star+\frac{1}{[2]^2}f,$ as claimed.
\end{proof}

\begin{proposition}
The maps $F^\star$ and $F_\star$ each give an isomorphism $E' \cong \mathcal{R}[a^{\pm 1}, c] \subseteq \mathcal{A}_{1,1}.$ In particular, $F^\star$ and $F_\star$ are injective.    
\end{proposition}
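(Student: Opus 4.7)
The plan is to prove this by separately establishing surjectivity onto $\mathcal{R}[a^{\pm 1},c]$ and then injectivity, both for $F^\star$; the argument for $F_\star$ follows by replacing $q$ with $q^{-1}$ throughout. I will focus on $F^\star$.

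For surjectivity, I would first note that $F^\star(q^{-2}\lambda_1\lambda_2)=a$ and $F^\star(q^{2}(\lambda_1\lambda_2)^{-1})=a^{-1}$, so $a^{\pm 1}$ lie in the image. Then from $F^\star(\lambda_1+\lambda_2)=\tfrac{q}{[2]}(c-a-1)$ we obtain $c = \tfrac{[2]}{q}F^\star(\lambda_1+\lambda_2)+a+1$ in the image as well. Since the image is a subalgebra containing $a^{\pm 1}$ and $c$, it contains all of $\mathcal{R}[a^{\pm 1},c]$. The reverse inclusion (image is contained in $\mathcal{R}[a^{\pm 1},c]$) is immediate from the definition of $F^\star$ on generators.

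For injectivity, I would use a leading-term argument based on the $c$-degree. Recall from the earlier discussion of $E'$ that $\{(\lambda_1+\lambda_2)^i(\lambda_1\lambda_2)^j : i\geq 0, j\in\mathbb{Z}\}$ is a free $\mathcal{R}$-basis of $E'$. Applying $F^\star$ gives
\begin{equation*}
F^\star\bigl((\lambda_1+\lambda_2)^i(\lambda_1\lambda_2)^j\bigr)=\frac{q^{i+2j}}{[2]^i}\,a^j(c-a-1)^i,
\end{equation*}
which, expanded in the basis $\{a^mc^n\}$ of $\mathcal{R}[a^{\pm 1},c]\subseteq\mathcal{A}_{1,1}$, has unique top-$c$-degree term $\tfrac{q^{i+2j}}{[2]^i}\,a^jc^i$. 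Now suppose $\sum_{i,j}\alpha_{i,j}(\lambda_1+\lambda_2)^i(\lambda_1\lambda_2)^j$ lies in $\ker F^\star$ with not all $\alpha_{i,j}=0$. Let $i_0$ be the largest $i$ with some $\alpha_{i_0,j}\neq 0$. Extracting the coefficient of $c^{i_0}$ in the image (which is a well-defined operation, since the $a^mc^n$ are distinct basis vectors of $\mathcal{A}_{1,1}$ by Proposition \ref{A1,1 basis}) yields
\begin{equation*}
\sum_j \alpha_{i_0,j}\,\frac{q^{i_0+2j}}{[2]^{i_0}}\,a^j=0 \quad\text{in }\mathcal{A}_{1,1}.
\end{equation*}
Since the $\{a^j\}_{j\in\mathbb{Z}}$ are themselves distinct basis elements of $\mathcal{A}_{1,1}$ and since $q$ and $[2]$ are invertible in $\mathcal{R}$ (the latter because $[2]$ divides $[12]$ in $\mathcal{R}$), this forces $\alpha_{i_0,j}=0$ for all $j$, a contradiction. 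Hence $F^\star$ is injective.

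The main obstacle, if any, is the verification that $[2]^{-1}\in\mathcal{R}$ and, more conceptually, the observation that the leading-in-$c$ term of $F^\star((\lambda_1+\lambda_2)^i(\lambda_1\lambda_2)^j)$ lands cleanly on the distinguished basis element $a^jc^i$ of the subalgebra $\mathcal{R}[a^{\pm 1},c]$; once this is in place, the rest is a routine triangularity argument, essentially the same observation as the one used in Proposition \ref{P,Q basis} to pass between two bases of a polynomial ring via bidegrees.
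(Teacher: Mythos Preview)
Your proof is correct and follows essentially the same approach as the paper: both verify that the image lands in and surjects onto $\mathcal{R}[a^{\pm 1},c]$ using invertibility of $[2]$, and both establish injectivity by a triangularity argument on the $c$-degree (the paper phrases this as a lexicographic bidegree on the monomials $a^kc^j$, which amounts to exactly your two-step extraction of the top $c$-coefficient followed by linear independence of the $a^j$).
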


\begin{proof}
We check the statement for $F^\star$. The other case is similar.
By recalling the Definition \ref{F star} of $F^\star$ on generators of $E',$ we see that the image of $F^\star$ is contained in the subalgebra $\mathcal{R}[a^{\pm1},c] \subseteq \mathcal{A}_{1,1}.$ Since $[2]$ is invertible in $\mathcal{R}$ we deduce from the definition of $F^\star$ on the generators of $E'$ that $a^{\pm 1},c$ are in the image of $F^\star.$ Thus, $F^\star$ is surjective.

 On the subalgebra $\mathcal{R}[a^{\pm 1}, c],$ for $k \in \mathbb{Z}$ and $j \in \mathbb{Z}_{\geq 0},$ we can assign the bidegree $(j,k)$  to the monomial $a^{k}c^j$ and give the set of all bidegrees the lexicographic ordering. We then see that $F^\star$ sends the monomial $(\lambda_1\lambda_2)^i(\lambda_1+\lambda_2)^j$ to an element of $\mathcal{R}[a^{\pm1},c]$ whose top bidegree is $(j,i+j).$ Thus, $F^\star$ sends a spanning set of $E'$ to a linearly independent subset of $\mathcal{A}_{1,1}$. Therefore, $F^\star$ is injective.  
\end{proof}

\subsection{The error term}\label{error term}

Our original strategy was to attempt to define an isomorphism from a subalgebra of $E=\mathcal{R}[\lambda_1^{\pm1},\lambda_2^{\pm1}]$ to $\mathcal{A}_{1,1}$ whose image on $\boldsymbol{x}^{(1)}$ was $x^\star$ and whose image on $\boldsymbol{y}^{(1)}$ was $y^\star.$ We did not find it possible to achieve this goal on the nose. Firstly, the algebra $\mathcal{A}_{1,1}$ contains zero divisors while $E$ does not. Indeed, as observed earlier, we have the identity $af=a$ and so $(a-1)f=0$ while both $a-1$ and $f$ are nonzero by consideration of our basis for $\mathcal{A}_{1,1}.$ 

Instead, we found an injective algebra homomomorphism $F^\star$ from the subalgebra $E' \subset E$ onto the subalgebra $\mathcal{R}[a^{\pm 1}, c] \subset \mathcal{A}_{1,1}.$ We did succeed in defining $F^\star$ so that its image on $\boldsymbol{x}^{(1)}$ is $x^\star.$ However, its image on $\boldsymbol{y}^{(1)}$ is not quite $y^\star.$ It is off by an error term: $F^\star(\boldsymbol{y}^{(1)})=y^\star+\frac{1}{[2]^2}f.$ Below we show that the error term is essentially negligible in our context since the diagram of $f$ allows loops to pass through it.

For example, we observe the following identity in $\mathcal{A}_{1,1}: y^\star f= y_\star f.$

\begin{align*}
y^\star f = \xusebox{yUpStarf}=\xusebox{yf}=\xusebox{yDownStarf}=y_\star f
\end{align*}

More generally, we will make use of the following observation.
\begin{lemma}\label{f transparency}
The following identities hold in $\mathcal{A}_{1,1}:$
\begin{align*}
x^\star f&=x_\star f\\
y^\star f&=y_\star f\\
\overline{y}f&=\underline{y}f.\\
\end{align*}
\end{lemma}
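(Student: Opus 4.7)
The identity $y^\star f = y_\star f$ is already established by the three-picture chain of isotopies displayed in the paragraph immediately above the lemma statement. The key topological observation is that $f$ is the union of two disconnected components (one attached to $p_0$, one to $p_1$) separated by an open annular region, so in the product $y^\star f$ the double-strand loop from $y^\star$ may be isotoped through this annular region from the ``above'' side of the identity strand to the ``below'' side, producing $y_\star f$.

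The identity $x^\star f = x_\star f$ is proved by the verbatim same argument with a single-strand loop in place of the double-strand loop, since $f$'s two self-folded-square components do not interact with the strand type of a loop being pushed through the annular region between them; the three-picture isotopy sequence used for $y^\star f = y_\star f$ applies unchanged.

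For $\overline{y} f = \underline{y} f$, we use the definitions $\overline{y} = y^\star + \frac{1}{[2]^2} f$ and $\underline{y} = y_\star + \frac{1}{[2]^2} f$ from Proposition \ref{y star computation}. Subtracting gives $\overline{y} - \underline{y} = y^\star - y_\star$, and hence
\[
\overline{y} f - \underline{y} f = (y^\star - y_\star) f = 0
\]
by the preceding identity. There is essentially no obstacle: the content of the lemma is purely topological and reduces to the single observation that $f$ creates a channel through which a loop can be freely isotoped between the two sides of the identity strand. If one wanted an entirely algebraic proof of the first two identities, one could instead use the commutativity of $\mathcal{A}_{1,1}$ together with the relations $af = a^{-1}f = f$ from Corollary \ref{presentation A1,1} to see that $g\cdot f$ depends only on $\phi(g) \in \mathcal{R}[c]$, where $\phi\colon \mathcal{R}[a^{\pm 1}, c] \to \mathcal{R}[c]$ sends $a \mapsto 1$, and then verify by direct computation (using the explicit formulas of Proposition \ref{y star computation}) that $\phi(x^\star) = \phi(x_\star)$ and $\phi(F^\star(\boldsymbol{y}^{(1)})) = \phi(F_\star(\boldsymbol{y}^{(1)}))$.
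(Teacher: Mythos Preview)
Your proof is correct and follows essentially the same approach as the paper: the first two identities are established by the topological observation that a loop can be isotoped through the open annular region of $f$, and the third follows immediately from the second together with the defining formulas for $\overline{y}$ and $\underline{y}$. Your additional remark about an algebraic verification via $a\mapsto 1$ is a nice alternative not mentioned in the paper, but the core argument matches.
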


\begin{proof}
The first two identities follow diagrammatically by using a 3-dimensional isotopy to pass a loop through the empty annular region in the diagram of $f.$ The third identity follows from the second and from the defining equations $\overline{y}:=y^\star+\frac{1}{[2]^2}f$ and $\underline{y}:=y_\star + \frac{1}{[2]^2}f.$
\end{proof}

\begin{proposition}\label{transparent alternative}
An element $S(x,y) \in \mathcal{A}$ is transparent (Definition \ref{transparent definition}) if and only if
\begin{equation*}
S(x^\star, \overline{y})=S(x_\star, \underline{y}) \in \mathcal{A}_{1,1}.    
\end{equation*}
\end{proposition}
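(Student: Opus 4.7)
The plan is to show that substituting $\overline{y}$ for $y^\star$ (and $\underline{y}$ for $y_\star$) preserves whether the two expressions are equal, because the resulting error terms cancel on both sides thanks to the transparency of $f$ recorded in Lemma \ref{f transparency}. The key observation is that $\overline{y} - y^\star = \underline{y} - y_\star = \frac{1}{[2]^2}f$, so the substitution in question is of the form $y \mapsto y + \frac{1}{[2]^2}f$ on both sides simultaneously.

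First I would write $S(x,y) = \sum_{i,j} a_{ij} x^i y^j$ and exploit the commutativity of $\mathcal{A}_{1,1}$ from Corollary \ref{A11 commutative} to apply the binomial theorem, obtaining an expansion of the form
\begin{equation*}
S(x^\star, \overline{y}) = S(x^\star, y^\star) + f \cdot T^\star,
\end{equation*}
where $T^\star$ is a polynomial expression in $x^\star$, $y^\star$, and $f$ collecting the positive-degree binomial contributions. The same computation produces $S(x_\star, \underline{y}) = S(x_\star, y_\star) + f \cdot T_\star$, where $T_\star$ is the same polynomial expression with each $x^\star$ replaced by $x_\star$ and each $y^\star$ replaced by $y_\star$.

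The crucial step is to verify that $f \cdot T^\star = f \cdot T_\star$. For this I would invoke Lemma \ref{f transparency}, which gives $x^\star f = x_\star f$ and $y^\star f = y_\star f$, together with the commutativity of $\mathcal{A}_{1,1}$. Iterating one factor at a time, one gets $(x^\star)^a (y^\star)^b f = (x_\star)^a (y_\star)^b f$ for all $a, b \geq 0$, and multiplying by an additional power of $f$ upgrades this to $f \cdot P(x^\star, y^\star, f) = f \cdot P(x_\star, y_\star, f)$ for any polynomial $P$ in three variables. Applying this to $T^\star$ gives the desired cancellation.

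Subtracting the two expansions then yields $S(x^\star, \overline{y}) - S(x_\star, \underline{y}) = S(x^\star, y^\star) - S(x_\star, y_\star)$, so the left-hand side vanishes if and only if the right-hand side does. By the second formulation in Definition \ref{transparent definition}, the right-hand side vanishing is precisely the transparency of $S(x,y)$, which completes the argument. I do not expect any serious obstacle here; the only conceptual point is to isolate the difference as $f$ times something and then invoke the transparency of $f$ to kill it on both sides at once.
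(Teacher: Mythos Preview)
Your proposal is correct and follows essentially the same approach as the paper: write $S$ as a sum of monomials, use commutativity of $\mathcal{A}_{1,1}$ and the binomial theorem to isolate the difference as $f$ times a polynomial expression, and then apply Lemma~\ref{f transparency} to see that this error term is the same on both sides. The only cosmetic difference is that the paper expands $y^\star=\overline{y}-\tfrac{1}{[2]^2}f$ (so its error terms are written in $\overline{y}$ rather than $y^\star$), but this leads to the identical cancellation.
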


\begin{proof}
It will suffice to show that
\begin{equation*}
S(x^\star,y^\star)-S(x_\star, y_\star)=S(x^\star,\overline{y})-S(x_\star,\underline{y}).
\end{equation*}
Write $S(x,y) \in \mathcal{A}$ as $S(x,y)=\sum a_{ij}x^iy^j$ for $a_{ij} \in \mathcal{R}.$

Then
\begin{align*}
S(x^\star, y^\star)&=\sum a_{ij}(x^\star)^i(y^\star)^j\\
&=\sum a_{ij}(x^\star)^i(\overline{y}-\frac{1}{[2]^2}f)^j\\
&=\sum a_{ij}(x^\star)^i(\sum_{k=0}^j \binom{j}{k}(\overline{y})^k(-\frac{1}{[2]^2}f)^{j-k})\\
&=\sum a_{ij}(x^\star)^i(\overline{y})^j+\sum_{k<j} a_{ij}\binom{j}{k}(x^\star)^i(\overline{y})^k(-\frac{1}{[2]^2}f)^{j-k}\\
&=S(x^\star,\overline{y})+\sum_{k<j} a_{ij}\binom{j}{k}(x_\star)^i(\underline{y})^k(-\frac{1}{[2]^2}f)^{j-k}.\\
\end{align*}

The last equality follows from Lemma \ref{f transparency}. Similarly, we compute that

\begin{equation*}
S(x_\star, y_\star)=S(x_\star,\underline{y})+\sum_{k<j} a_{ij} \binom{j}{k}(x_\star)^i(\underline{y})^k(-\frac{1}{[2]^2}f)^{j-k},
\end{equation*}
and the result follows.
\end{proof}

\subsection{The transparent elements}

Recall the definition of the degree $d_1$ from Definition \ref{degree}.

\begin{proposition}
Suppose $\lambda \in E' \subseteq E=\mathcal{R}[\lambda_1^{\pm1},\lambda_2^{\pm1}]$ is a homogeneous element of degree $d_1(\lambda)=k.$ Then $F^\star(\lambda)=q^{2k}F_\star(\lambda).$
\end{proposition}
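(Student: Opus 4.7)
The plan is to exploit the fact that both $F^\star$ and $F_\star$ are algebra homomorphisms from $E' = \mathcal{R}[\lambda_1+\lambda_2,(\lambda_1\lambda_2)^{\pm 1}]$, so the identity $F^\star(\lambda) = q^{2k}F_\star(\lambda)$ is multiplicative in $\lambda$: if it holds for $\lambda$ of degree $k$ and $\mu$ of degree $\ell$, then it holds for $\lambda\mu$ of degree $k+\ell$. It therefore suffices to verify the identity on an algebra generating set for $E'$ and then extend to a basis of homogeneous elements via multiplicativity and $\mathcal{R}$-linearity.

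Next I would verify the identity directly on the three generators using Definition \ref{F star}. On $\lambda_1\lambda_2$ (of degree $2$), the ratio $F^\star/F_\star$ equals $q^2a/q^{-2}a = q^4 = q^{2\cdot 2}$. On $(\lambda_1\lambda_2)^{-1}$ (of degree $-2$), since the maps are algebra homomorphisms, the ratio is $q^{-4} = q^{2\cdot(-2)}$. On $\lambda_1+\lambda_2$ (of degree $1$), the ratio is $(q/[2])/(q^{-1}/[2]) = q^2 = q^{2\cdot 1}$. In each case the prefactor is exactly $q^{2d_1}$. Using multiplicativity of both maps, the basis monomial $(\lambda_1+\lambda_2)^i(\lambda_1\lambda_2)^j$, which is homogeneous of degree $i+2j$ in $E$, then satisfies
\[
F^\star\big((\lambda_1+\lambda_2)^i(\lambda_1\lambda_2)^j\big) = q^{2i}\,q^{4j}\,F_\star\big((\lambda_1+\lambda_2)^i(\lambda_1\lambda_2)^j\big) = q^{2(i+2j)}F_\star\big((\lambda_1+\lambda_2)^i(\lambda_1\lambda_2)^j\big).
\]
Finally, a general homogeneous element $\lambda \in E'$ of degree $k$ is an $\mathcal{R}$-linear combination of those basis monomials $(\lambda_1+\lambda_2)^i(\lambda_1\lambda_2)^j$ with $i+2j=k$, so linearity of $F^\star$ and $F_\star$ yields the desired identity.

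I expect no serious obstacle. The only point worth care is confirming that the $E'$-basis monomials really are homogeneous in the sense of $d_1$ when viewed in $E$, i.e.\ that $(\lambda_1+\lambda_2)^i$ expanded via the binomial theorem produces only terms $\lambda_1^a\lambda_2^{i-a}$ of total degree $i$, and consequently a homogeneous element of degree $k$ in $E'$ has an expansion supported on $\{(i,j) : i+2j=k\}$. This is immediate from the compatibility of $d_1$ with multiplication (recorded earlier in the excerpt), after which the rest of the argument is a routine bookkeeping of exponents.
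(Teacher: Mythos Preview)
Your proof is correct and follows essentially the same approach as the paper: verify the identity on the homogeneous generators $\lambda_1+\lambda_2$ and $(\lambda_1\lambda_2)^{\pm 1}$ of $E'$, then use that $F^\star$ and $F_\star$ are algebra homomorphisms to extend multiplicatively. You have simply spelled out in more detail the passage from generators to arbitrary homogeneous elements via the basis monomials $(\lambda_1+\lambda_2)^i(\lambda_1\lambda_2)^j$ and linearity, which the paper leaves implicit.
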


\begin{proof}
Since the generators $\lambda_1 +\lambda_2$ and $(\lambda_1\lambda_2)^{\pm1}$ are homogenous elements of degree $1$ and degree $\pm2$, respectively, it suffices to check the statement on these generators. We observe that $F^\star(\lambda_1\lambda_2)=q^2a=q^4(q^{-2}a)=q^4F_\star(\lambda_1\lambda_2).$ Similarly, $F^\star(\lambda_1+\lambda_2)=\frac{q}{[2]}(c-a-1)=q^2\frac{q^{-1}}{[2]}(c-a-1)=q^2F_\star(\lambda_1\lambda_2).$
\end{proof}

Recall the polynomials $P_n$ and $Q_n$ and their power sum properties from Section \ref{def polynomials}. We are now able to show that $P_n$ and $Q_n$ are transparent when $q$ is a $2n\text{-th}$ root of unity.

\begin{theorem}\label{transparent}
If $q \in \mathcal{R}$ satisfies $q^{2n}=1,$ then both the elements $P_n(x,y)$ and $Q_n(x,y)$ in $\mathcal{A}$ are transparent elements.
\end{theorem}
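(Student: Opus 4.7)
The plan is to reduce the transparency question to an equality inside $\mathcal{A}_{1,1}$ using Proposition \ref{transparent alternative}, then transport that equality to the ring $E'$ via the algebra maps $F^\star$ and $F_\star$, and finally exploit the power sum identity for $P_n,Q_n$ together with the key homogeneity relation $F^\star(\lambda)=q^{2k}F_\star(\lambda)$ for $\lambda$ homogeneous of degree $k$.

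First I would note that by Proposition \ref{transparent alternative}, showing $P_n$ is transparent is equivalent to showing
\begin{equation*}
P_n(x^\star,\overline{y})=P_n(x_\star,\underline{y})\quad \text{in }\mathcal{A}_{1,1},
\end{equation*}
and similarly for $Q_n$. Since $F^\star$ and $F_\star$ are algebra homomorphisms out of $E'$, and since Proposition \ref{y star computation} gives $F^\star(\boldsymbol{x}^{(1)})=x^\star$, $F^\star(\boldsymbol{y}^{(1)})=\overline{y}$ (resp.\ with $F_\star$), I can rewrite both sides as
\begin{equation*}
P_n(x^\star,\overline{y})=F^\star\bigl(P_n(\boldsymbol{x}^{(1)},\boldsymbol{y}^{(1)})\bigr)=F^\star(\boldsymbol{x}^{(n)}),
\end{equation*}
and analogously $P_n(x_\star,\underline{y})=F_\star(\boldsymbol{x}^{(n)})$, using the power sum identity $P_n(\boldsymbol{x}^{(1)},\boldsymbol{y}^{(1)})=\boldsymbol{x}^{(n)}$ established earlier. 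The same manipulation reduces the $Q_n$ statement to proving $F^\star(\boldsymbol{y}^{(n)})=F_\star(\boldsymbol{y}^{(n)})$. So the theorem reduces to the equalities
\begin{equation*}
F^\star(\boldsymbol{x}^{(n)})=F_\star(\boldsymbol{x}^{(n)}),\qquad F^\star(\boldsymbol{y}^{(n)})=F_\star(\boldsymbol{y}^{(n)})
\end{equation*}
in $\mathcal{A}_{1,1}$ under the hypothesis $q^{2n}=1$.

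To finish, I would write each of $\boldsymbol{x}^{(n)}$ and $\boldsymbol{y}^{(n)}$ as a sum of $d_1$-homogeneous pieces, each of which lies in the subalgebra $E'$. Inspecting the defining expressions, every monomial summand of $\boldsymbol{x}^{(n)}$ has $d_1$-degree in $\{\pm 2n,\pm n,0\}$, and every monomial summand of $\boldsymbol{y}^{(n)}$ has $d_1$-degree in $\{\pm 3n,\pm 2n,\pm n,0\}$. Grouping terms of equal degree produces expressions like $\lambda_1^n+\lambda_2^n$, $(\lambda_1\lambda_2)^{\pm n}$, $(\lambda_1\lambda_2)^n(\lambda_1^n+\lambda_2^n)$, $(\lambda_1\lambda_2^{-1})^n+(\lambda_1^{-1}\lambda_2)^n$, etc.; each is symmetric in $\lambda_1,\lambda_2$ with only integer powers of $\lambda_1\lambda_2$, hence lies in $E'=\mathcal{R}[\lambda_1+\lambda_2,(\lambda_1\lambda_2)^{\pm 1}]$ by Newton's identities. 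Now applying the proposition at the start of the subsection to each homogeneous piece $\lambda$ of degree $kn$, we obtain $F^\star(\lambda)=q^{2kn}F_\star(\lambda)=(q^{2n})^k F_\star(\lambda)=F_\star(\lambda)$ under the assumption $q^{2n}=1$. Summing the pieces yields the required equalities.

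The only potentially tricky step is checking that every homogeneous component of $\boldsymbol{x}^{(n)}$ and $\boldsymbol{y}^{(n)}$ really does sit in $E'$ so that the degree lemma applies; but this is an inspection, using that the summands come in pairs symmetric under $\lambda_1\leftrightarrow \lambda_2$. All other steps are purely formal once the machinery of Section 4 and Proposition \ref{transparent alternative} is in place. Thus, no new diagrammatic computation is required, and the transparency of $P_n$ and $Q_n$ at $2n$-th roots of unity follows from the combination of the power sum identity with the homogeneity relation between $F^\star$ and $F_\star$.
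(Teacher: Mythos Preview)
Your proposal is correct and follows essentially the same approach as the paper: reduce transparency via Proposition~\ref{transparent alternative}, rewrite both sides as $F^\star(\boldsymbol{x}^{(n)})$ and $F_\star(\boldsymbol{x}^{(n)})$ (resp.\ $\boldsymbol{y}^{(n)}$) using the power sum identity, decompose into $d_1$-homogeneous pieces, and apply the relation $F^\star(\lambda)=q^{2k}F_\star(\lambda)$ to each piece so that $q^{2n}=1$ kills all the scalars. The paper writes out the homogeneous pieces explicitly rather than invoking symmetry and Newton's identities, but the argument is otherwise identical.
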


\begin{proof}

We compute

\begin{align*}
P_n(x^\star,\overline{y})&=P_n(F^\star(\boldsymbol{x}^{(1)}),F^\star(\boldsymbol{y}^{(1)}))\\
&=F^\star(P_n(\boldsymbol{x}^{(1)},\boldsymbol{y}^{(1)}))\\
&=F^\star(\boldsymbol{x}^{(n)})\\
&=F^\star(\lambda_1^n+\lambda_2^n)+F^\star(\lambda_1^{-n}+\lambda_2^{-n})+F^\star((\lambda_1\lambda_2)^n)+F^\star((\lambda_1\lambda_2)^{-n})+F^\star(1)\\
&=q^{2n}F_\star(\lambda_1^n+\lambda_2^n)+q^{-2n}F_\star(\lambda_1^{-n}+\lambda_2^{-n})+q^{4n}F_\star((\lambda_1\lambda_2)^n)+q^{-4n}F_\star((\lambda_1\lambda_2)^{-n})+F_\star(1).\\
\end{align*}

Similarly, we compute

\begin{equation*}
P_n(x_\star,\underline{y})=F_\star(\lambda_1^n+\lambda_2^n)+F_\star(\lambda_1^{-n}+\lambda_2^{-n})+F_\star((\lambda_1\lambda_2)^n)+F_\star((\lambda_1\lambda_2)^{-n})+F_\star(1).\\
\end{equation*}

Thus, when $q^{2n}=1$ we have $P_n(x^\star,\overline{y})=P_n(x_\star,\underline{y}),$ and $P_n$ is transparent by Proposition \ref{transparent alternative}. For the case of $Q_n$ we perform a similar computation

\begin{align*}
Q_n(x^\star, \overline{y})=& F^\star(\boldsymbol{y}^{(n)})\\
=& F^\star((\lambda_1^2\lambda_2)^n+(\lambda_1\lambda_2^2)^n)+F^\star((\lambda_1\lambda_2)^n)+F^\star(\lambda_1^n+\lambda_2^n)+F^\star(\lambda_1\lambda_2^{-1})^n+(\lambda_1^{-1}\lambda_2)^n) \\
& +F^\star(1)+F^\star(1)+F^\star(\lambda_2^{-n}+\lambda_1^{-n})+F^\star((\lambda_1\lambda_2)^{-n})+F^\star((\lambda_1\lambda_2^{2})^{-n}+(\lambda_1^2\lambda_2)^{-n}).\\
=&q^{6n}F_\star((\lambda_1^2\lambda_2)^n+(\lambda_1\lambda_2^2)^n)+q^{4n}F_\star((\lambda_1\lambda_2)^n)+q^{2n}F_\star(\lambda_1^n+\lambda_2^n)+F_\star((\lambda_1\lambda_2^{-1})^n+(\lambda_1^{-1}\lambda_2)^n) \\
& +F_\star(1)+F_\star(1)+q^{-2n}F_\star(\lambda_2^{-n}+\lambda_1^{-n})+q^{-4n}F_\star((\lambda_1\lambda_2)^{-n})+q^{-6n}F_\star((\lambda_1\lambda_2^{2})^{-n}+(\lambda_1^2\lambda_2)^{-n}).\\
\end{align*}

to see that $Q_n$ is transparent when $q^{2n}=1.$
\end{proof}

\begin{notation}Inspired by the computations in the previous proof, and to make the proof of the upcoming Theorem \ref{uniqueness} cleaner, we will use the following notation.

\begin{align*}
\tilde{\boldsymbol{x}}^{(i)}=&q^{2i}(\lambda_1^i+\lambda_2^i)+q^{-2i}(\lambda_1^{-i}+\lambda_2^{-i})+q^{4i}(\lambda_1\lambda_2)^i+q^{-4i}(\lambda_1\lambda_2)^{-i}+1\\
\tilde{\boldsymbol{y}}^{(j)}=&q^{6j}((\lambda_1^2\lambda_2)^j+(\lambda_1\lambda_2^2)^j)+q^{4j}(\lambda_1\lambda_2)^j+q^{2j}(\lambda_1^j+\lambda_2^j)+(\lambda_1\lambda_2^{-1})^j+(\lambda_1^{-1}\lambda_2)^j \\
& +1+1+q^{-2j}(\lambda_2^{-j}+\lambda_1^{-j})+q^{-4j}(\lambda_1\lambda_2)^{-j}+q^{-6j}((\lambda_1\lambda_2^{2})^{-j}+(\lambda_1^2\lambda_2)^{-j}).\\
\end{align*}

\end{notation}

By the injectivity of $F_\star,$ the elements $\tilde{\boldsymbol{x}}^{(i)},\tilde{\boldsymbol{y}}^{(j)}  \in E'$ are the unique elements satisfying 

\begin{align*}
F^\star(\boldsymbol{x}^{(i)})&=F_\star(\tilde{\boldsymbol{x}}^{(i)})\\
F^\star(\boldsymbol{y}^{(j)})&=F_\star(\tilde{\boldsymbol{y}}^{(j)}).
\end{align*}

We now discuss some terms of $\boldsymbol{x}^{(i)}\boldsymbol{y}^{(j)}$ which behave like leading terms. Recall the bidegree $d_2$ from Definition \ref{bidegree}. The top two terms of $\boldsymbol{x}^{(i)}\boldsymbol{y}^{(j)}$ , with respect to the bidegree $d_2$ are $\lambda_1^{i+2j}\lambda_2^{i+j}$ and $\lambda_1^{i+2j}\lambda_2^{j}.$ The following lemma shows that the top terms of a monomial $\boldsymbol{x}^{(s)}\boldsymbol{y}^{(t)}$ cannot be canceled out by any terms from a monomial $\boldsymbol{x}^{(i)}\boldsymbol{y}^{(j)}$of lower bidegree.

\begin{lemma}\label{leading terms}
Suppose $d_2(\boldsymbol{x}^{(i)}\boldsymbol{y}^{(j)})<d_2(\boldsymbol{x}^{(s)}\boldsymbol{y}^{(t)})$
for some $s,t,i,j \in \mathbb{Z}_{\geq 0}.$ Then $\boldsymbol{x}^{(i)}\boldsymbol{y}^{(j)}$ contains no term $\lambda_1^{s+2t}\lambda_2^{s+t}$ and no term $\lambda_1^{s+2t}\lambda_2^t.$
\end{lemma}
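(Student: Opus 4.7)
The plan is to argue by direct inspection of the explicit expansions of $\boldsymbol{x}^{(i)}$ and $\boldsymbol{y}^{(j)}$ recorded in Section \ref{def polynomials}, together with the multiplicativity $d_2(pq) = d_2(p) + d_2(q)$ established earlier.

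For the first monomial $\lambda_1^{s+2t}\lambda_2^{s+t}$ the argument is immediate: its bidegree is $(s+2t, s+t) = d_2(\boldsymbol{x}^{(s)}\boldsymbol{y}^{(t)})$, which by hypothesis strictly exceeds $d_2(\boldsymbol{x}^{(i)}\boldsymbol{y}^{(j)}) = (i+2j, i+j)$. So no term of $\boldsymbol{x}^{(i)}\boldsymbol{y}^{(j)}$ can take that shape.

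For $\lambda_1^{s+2t}\lambda_2^t$ I would split based on the lexicographic inequality $(i+2j, i+j) < (s+2t, s+t)$. Reading off from the explicit formulas, the maximum $\lambda_1$-exponent appearing in $\boldsymbol{x}^{(i)}$ is $i$ and in $\boldsymbol{y}^{(j)}$ is $2j$, so the maximum $\lambda_1$-exponent in $\boldsymbol{x}^{(i)}\boldsymbol{y}^{(j)}$ is $i+2j$. If $i + 2j < s + 2t$ there is nothing to check. Otherwise $i + 2j = s + 2t$ and $i + j < s + t$, and any monomial of $\boldsymbol{x}^{(i)}\boldsymbol{y}^{(j)}$ with $\lambda_1$-exponent $i + 2j$ must arise from multiplying a maximal-$\lambda_1$-exponent monomial of $\boldsymbol{x}^{(i)}$ with one of $\boldsymbol{y}^{(j)}$.

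Scanning the explicit lists in Section \ref{def polynomials}, the only monomials of $\boldsymbol{x}^{(i)}$ with $\lambda_1$-exponent $i$ are $\lambda_1^i$ and $(\lambda_1\lambda_2)^i$, and the only monomial of $\boldsymbol{y}^{(j)}$ with $\lambda_1$-exponent $2j$ is $(\lambda_1^2\lambda_2)^j$. Their products contribute only the $\lambda_2$-exponents $j$ and $i+j$. Finally, subtracting $i+j < s+t$ from $i+2j = s+2t$ gives $j > t$, so both $j$ and $i+j$ strictly exceed $t$, and $\lambda_1^{s+2t}\lambda_2^t$ does not appear. The only step that needs a brief calculation is this last subtraction; everything else is bookkeeping from the explicit formulas, so no serious obstacle is anticipated.
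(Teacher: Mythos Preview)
Your proof is correct and follows essentially the same approach as the paper: both identify the two terms $\lambda_1^{i+2j}\lambda_2^{i+j}$ and $\lambda_1^{i+2j}\lambda_2^j$ as the only monomials of $\boldsymbol{x}^{(i)}\boldsymbol{y}^{(j)}$ with maximal $\lambda_1$-exponent, then rule them out. Your treatment is organized slightly differently---you dispose of $\lambda_1^{s+2t}\lambda_2^{s+t}$ immediately via the bidegree $d_2$, and for $\lambda_1^{s+2t}\lambda_2^t$ you derive $j>t$ by subtracting the two constraints, whereas the paper argues both monomials together and finishes with a small case analysis---but these are cosmetic variations on the same argument.
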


\begin{proof}
Suppose the statement is false. By inspection, the highest power of $\lambda_1$ in any of the terms of $\boldsymbol{x}^{(i)}\boldsymbol{y}^{(j)}$ occurs in $\lambda_1^{i+2j}\lambda_2^{i+j}$ and $\lambda_1^{i+2j}\lambda_2^j.$ By assumption, $i+2j \leq s+2t.$ Thus, if $\boldsymbol{x}^{(i)}\boldsymbol{y}^{(j)}$ contains such a term, we must have $i+2j=s+2t.$ Then by assumption, $s+t>i+j\geq j$ and so neither the term $\lambda_1^{i+2j}\lambda_2^{i+j}$ nor the term $\lambda_1^{i+2j}\lambda_2^j$ can be equal to $\lambda_1^{s+2t}\lambda_2^{s+t}.$ For the remaining case, suppose one of the terms is equal to $\lambda_1^{s+2t}\lambda_2^t.$ If $\lambda_1^{i+2j}\lambda_2^j=\lambda_1^{s+2t}\lambda_2^t$ then we must have $j=t$ and $i=s,$ which contradicts our hypothesis. If $\lambda_1^{i+2j}\lambda_2^{i+j}=\lambda_1^{s+2t}\lambda_2^t$ then we must have $i+2j=s+2t$ and $i+j=t.$ This system implies that $s=-i,$ which implies that $i=s=0$ and $j=t,$ and is a contradiction.

\end{proof}

\begin{theorem}\label{uniqueness}

If $S(x,y) \in \mathcal{A}$ is a transparent element, then $q^2$ is a root of unity of some order $n.$ Furthermore, if $n$ is coprime to $3$, then we must have $S(x,y) \in \mathcal{R}[P_n,Q_n],$ where $P_n:=P_n(x,y)$ and $Q_n:=Q_n(x,y).$

\end{theorem}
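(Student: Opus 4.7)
The plan is to expand $S$ in the basis $\{P_kQ_l\}_{k,l\geq 0}$ of $\mathcal{R}[x,y]$ from Proposition \ref{P,Q basis}, transfer the transparency condition into an identity in $E'$ via injectivity of $F_\star$, extract information from two distinguished top monomials using Lemma \ref{leading terms}, and then peel off the leading term by induction.

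First, write $S=\sum c_{k,l}P_k(x,y)Q_l(x,y)$. Using the power sum property $P_k(\boldsymbol{x}^{(1)},\boldsymbol{y}^{(1)})=\boldsymbol{x}^{(k)}$ and $Q_l(\boldsymbol{x}^{(1)},\boldsymbol{y}^{(1)})=\boldsymbol{y}^{(l)}$ together with Proposition \ref{y star computation}, we obtain for each $k,l\geq 0$ the equalities $P_k(x^\star,\bar y)Q_l(x^\star,\bar y)=F^\star(\boldsymbol{x}^{(k)}\boldsymbol{y}^{(l)})=F_\star(\tilde{\boldsymbol{x}}^{(k)}\tilde{\boldsymbol{y}}^{(l)})$ and $P_k(x_\star,\underline y)Q_l(x_\star,\underline y)=F_\star(\boldsymbol{x}^{(k)}\boldsymbol{y}^{(l)})$. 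Transparency together with Proposition \ref{transparent alternative} equates these two expressions, so the injectivity of $F_\star$ yields the identity
\[
\sum_{k,l}c_{k,l}\bigl(\tilde{\boldsymbol{x}}^{(k)}\tilde{\boldsymbol{y}}^{(l)}-\boldsymbol{x}^{(k)}\boldsymbol{y}^{(l)}\bigr)=0 \quad \text{in } E'.
\]

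Next, assume $S$ is non-constant (constants trivially lie in $\mathcal{R}\subseteq\mathcal{R}[P_n,Q_n]$) and let $(s,t)$ be the unique pair with $c_{s,t}\neq 0$ maximizing the bidegree $(k+2l,k+l)$. The monomials $\lambda_1^{s+2t}\lambda_2^{s+t}$ and $\lambda_1^{s+2t}\lambda_2^t$ occur in $\boldsymbol{x}^{(s)}\boldsymbol{y}^{(t)}$ with coefficient $1$, coming from the products $(\lambda_1\lambda_2)^s\cdot(\lambda_1^2\lambda_2)^t$ and $\lambda_1^s\cdot(\lambda_1^2\lambda_2)^t$ respectively; in $\tilde{\boldsymbol{x}}^{(s)}\tilde{\boldsymbol{y}}^{(t)}$ the same products contribute coefficients $q^{4s+6t}$ and $q^{2s+6t}$. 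By Lemma \ref{leading terms} these monomials cannot appear in any other summand of the above identity, so their coefficients must vanish, yielding $c_{s,t}(q^{4s+6t}-1)=c_{s,t}(q^{2s+6t}-1)=0$. Since $\mathcal{R}$ is a domain and $c_{s,t}\neq 0$, we conclude $q^{2s}=q^{6t}=1$. As $(s,t)\neq(0,0)$, at least one of these is nontrivial and $q^2$ has some finite order, call it $n$, proving the first assertion. The boundary cases $s=0$ or $t=0$ cause the two monomials to coincide, but only the corresponding single equation is needed there.

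Finally, when $\gcd(n,3)=1$, the element $q^6$ also has order $n$, forcing $n\mid s$ and $n\mid t$. Writing $s=mn$ and $t=m'n$, Corollary \ref{power sum corollary} gives $P_s=P_m(P_n,Q_n)$ and $Q_t=Q_{m'}(P_n,Q_n)$, so $P_sQ_t\in\mathcal{R}[P_n,Q_n]$. Moreover $P_sQ_t$ is itself transparent: $P_s$ and $Q_t$ are transparent by Theorem \ref{transparent} (since $q^{2s}=q^{2t}=1$), and transparency is preserved under products because $(-)^\star$ and $(-)_\star$ are algebra homomorphisms. Hence $S-c_{s,t}P_sQ_t$ is transparent with strictly fewer nonzero coefficients in the $\{P_kQ_l\}$ basis, and induction on the finite support of $S$ concludes the argument. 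The main obstacle is the coefficient extraction: two independent top-monomial equations are required to separately recover $q^{2s}=1$ and $q^{6t}=1$, which is exactly the delicate content engineered into Lemma \ref{leading terms}.
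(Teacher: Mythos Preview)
Your proof is correct and follows essentially the same route as the paper's: expand $S$ in the $\{P_kQ_l\}$ basis, push the transparency condition through $F_\star$ to obtain $\sum c_{k,l}(\tilde{\boldsymbol{x}}^{(k)}\tilde{\boldsymbol{y}}^{(l)}-\boldsymbol{x}^{(k)}\boldsymbol{y}^{(l)})=0$ in $E'$, read off the coefficients of the two top monomials $\lambda_1^{s+2t}\lambda_2^{s+t}$ and $\lambda_1^{s+2t}\lambda_2^{t}$ via Lemma~\ref{leading terms}, deduce the divisibility conditions on $s$ and $t$, and then strip off the leading term using Corollary~\ref{power sum corollary}. Your explicit justification that $P_sQ_t$ is itself transparent (via Theorem~\ref{transparent} and multiplicativity of $(-)^\star,(-)_\star$) and your handling of the degenerate cases $s=0$ or $t=0$ are details the paper leaves implicit, but the argument is otherwise identical.
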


\begin{proof}

Suppose $S(x,y) \in \mathcal{A}$ is transparent. By Proposition \ref{P,Q basis}, $\{P_iQ_j\}_{i,j \geq 0}$ is a basis for $\mathcal{A}.$ We write $S(x,y)=\sum a_{ij}P_iQ_j.$

Since $S(x,y)$ is transparent, we have by Proposition \ref{transparent alternative} that $S(x^\star, \overline{y})-S(x_\star, \underline{y})=0 \in \mathcal{A}_{1,1}.$

We compute

\begin{align*}
S(x^\star, \overline{y})-S(x_\star, \underline{y})&=\sum a_{ij}(P_i(x^\star, \overline{y})Q_j(x^\star, \overline{y})-P_i(x_\star, \underline{y})Q_j(x_\star, \underline{y}))\\
&=\sum a_{ij}(F^\star(\boldsymbol{x}^{(i)}\boldsymbol{y}^{(j)})-F_\star(\boldsymbol{x}^{(i)}\boldsymbol{y}^{(j)}))\\
&=\sum a_{ij}(F_\star(\tilde{\boldsymbol{x}}^{(i)}\tilde{\boldsymbol{y}}^{(j)})-F_\star(\boldsymbol{x}^{(i)}\boldsymbol{y}^{(j)})))\\
&=F_\star(\sum a_{ij} (\tilde{\boldsymbol{x}}^{(i)}\tilde{\boldsymbol{y}}^{(j)}-\boldsymbol{x}^{(i)}\boldsymbol{y}^{(j)})).\\
\end{align*}

Let $P_{s}Q_{t}$ be the unique term of $S(x,y)$ of maximal bidegree $(s+2t,s+t).$

Using the injectivity of $F_\star$ and considering the leading terms of Lemma \ref{leading terms}, we have

\begin{align*}
a_{st}(q^{4s+6t}-1)&=0\\
a_{st}(q^{2s+6t}-1)&=0,\\
\end{align*}

where $a_{st}(q^{4s+6t}-1)$ is the coefficient of $\lambda_1^{s+2t}\lambda_2^{s+t}$ in $\sum a_{ij} (\tilde{\boldsymbol{x}}^{(i)}\tilde{\boldsymbol{y}}^{(j)}-\boldsymbol{x}^{(i)}\boldsymbol{y}^{(j)})$ and $a_{st}(q^{2s+6t}-1)$ is the coefficient of $\lambda_1^{s+2t}\lambda_2^{t}.$

Since $a_{st} \neq 0$ and $\mathcal{R}$ is an integral domain, we deduce that $q^2$ is a root of unity of some order $n$ such that

\begin{align*}
2s+3t \equiv 0 \mod{n}\\
s+3t \equiv 0 \mod{n}.\\
\end{align*}

We deduce that $s,3t \equiv 0 \mod{n}.$ If $n$ is not divisible by $3$, then we deduce that $s,t \equiv 0 \mod{n}.$ In this case, there exist $k_{s},k_{t} \in \mathbb{Z}_{\geq 0}$ such that $s=k_{s}n$ and $t=k_{t}n.$

By Corollary \ref{power sum corollary}, $P_{s}(x,y)=P_{k_{s}n}(x,y)=P_{k_{s}}(P_n(x,y),Q_n(x,y))$ and $Q_{t}(x,y)=Q_{k_{t}}(P_n(x,y),Q_n(x,y)).$ Thus, the top term of $S(x,y)$ is in $\mathcal{R}[P_n,Q_n].$ Consequently, $S(x,y)-a_{st}P_{s}Q_{t}$ is a transparent element of lower bidegree than $S(x,y).$ A repeated application of this argument shows that $S(x,y) \in \mathcal{R}[P_n,Q_n].$

\end{proof}

\bibliographystyle{amsalpha}
\bibliography{G2.bib}

\providecommand{\bysame}{\leavevmode\hbox to3em{\hrulefill}\thinspace}
\providecommand{\MR}{\relax\ifhmode\unskip\space\fi MR }
\providecommand{\MRhref}[2]{%
  \href{http://www.ams.org/mathscinet-getitem?mr=#1}{#2}
}
\providecommand{\href}[2]{#2}
\begin{thebibliography}{BERT21}

\bibitem[BERT21]{BERT21}
Elijah Bodish, Ben Elias, David E.~V. Rose, and Logan Tatham, \emph{Type {$C$}
  webs}, 2021, arXiv:2103.14997.

\bibitem[BH23]{BH23}
Francis Bonahon and Vijay Higgins, \emph{Central elements in the
  $\mathrm{SL}_d$-skein algebra of a surface}, 2023, arXiv:2308.13691.

\bibitem[Bod22]{Bod22}
Elijah Bodish, \emph{Web calculus and tilting modules in type {$C_2$}}, Quantum
  Topol. \textbf{13} (2022), no.~3, 407--458. \MR{4537310}

\bibitem[BW16]{BW16}
Francis Bonahon and Helen Wong, \emph{Representations of the {K}auffman bracket
  skein algebra {I}: invariants and miraculous cancellations}, Invent. Math.
  \textbf{204} (2016), no.~1, 195--243. \MR{3480556}

\bibitem[BW21]{BW21}
Elijah Bodish and Haihan Wu, \emph{Triple clasp formulas for $\mathfrak{g}_2$},
  2021, arXiv:2112.01007.

\bibitem[CKM14]{CKM14}
Sabin Cautis, Joel Kamnitzer, and Scott Morrison, \emph{Webs and quantum skew
  {H}owe duality}, Math. Ann. \textbf{360} (2014), no.~1-2, 351--390.
  \MR{3263166}

\bibitem[Eli15]{Eli15}
Ben Elias, \emph{Light ladders and clasp conjectures}, 2015, arXiv:1510.06840.

\bibitem[FG00]{FG00}
Charles Frohman and R\u{a}zvan Gelca, \emph{Skein modules and the
  noncommutative torus}, Trans. Amer. Math. Soc. \textbf{352} (2000), no.~10,
  4877--4888. \MR{1675190}

\bibitem[FKBL19]{FKBL19}
Charles Frohman, Joanna Kania-Bartoszynska, and Thang L\^{e}, \emph{Unicity for
  representations of the {K}auffman bracket skein algebra}, Invent. Math.
  \textbf{215} (2019), no.~2, 609--650. \MR{3910071}

\bibitem[FM12]{FM12}
Benson Farb and Dan Margalit, \emph{A primer on mapping class groups},
  Princeton Mathematical Series, vol.~49, Princeton University Press,
  Princeton, NJ, 2012. \MR{2850125}

\bibitem[FS22]{FS22}
Charles Frohman and Adam~S. Sikora, \emph{{$SU(3)$}-skein algebras and webs on
  surfaces}, Math. Z. \textbf{300} (2022), no.~1, 33--56. \MR{4359515}

\bibitem[GJS20]{GJS19}
Iordan Ganev, David Jordan, and Pavel Safronov, \emph{The quantum frobenius for
  character varieties and multiplicative quiver varieties}, 2020,
  arXiv:1901.11450.

\bibitem[Hig23]{Hig23}
Vijay Higgins, \emph{Triangular decomposition of {$SL_3$} skein algebras},
  Quantum Topol. \textbf{14} (2023), no.~1, 1--63.

\bibitem[Kup94]{Kup94}
Greg Kuperberg, \emph{The quantum {$G_2$} link invariant}, Internat. J. Math.
  \textbf{5} (1994), no.~1, 61--85. \MR{1265145}

\bibitem[Kup96]{Kup96}
\bysame, \emph{Spiders for rank {$2$} {L}ie algebras}, Comm. Math. Phys.
  \textbf{180} (1996), no.~1, 109--151. \MR{1403861}

\bibitem[L{\^{e}}15]{Le15}
Thang T.~Q. L{\^{e}}, \emph{On {K}auffman bracket skein modules at roots of
  unity}, Algebr. Geom. Topol. \textbf{15} (2015), no.~2, 1093--1117.
  \MR{3342686}

\bibitem[L{\^{e}}18]{Le18}
\bysame, \emph{Triangular decomposition of skein algebras}, Quantum Topol.
  \textbf{9} (2018), no.~3, 591--632. \MR{3827810}

\bibitem[Lic97]{Lick97}
W.~B.~Raymond Lickorish, \emph{An introduction to knot theory}, Graduate Texts
  in Mathematics, vol. 175, Springer-Verlag, New York, 1997. \MR{1472978}

\bibitem[Mor11]{Mor11}
Scott Morrison, \emph{The braid group surjects onto {$G_2$} tensor space},
  Pacific J. Math. \textbf{249} (2011), no.~1, 189--198. \MR{2764946}

\bibitem[MPS23]{MPS23}
Hugh Morton, Alex Pokorny, and Peter Samuelson, \emph{The {K}auffman skein
  algebra of the torus}, Int. Math. Res. Not. IMRN (2023), no.~1, 855--900.
  \MR{4530122}

\bibitem[MQ23]{MQ23}
Travis Mandel and Fan Qin, \emph{Bracelets bases are theta bases}, 2023,
  arXiv:2301.11101.

\bibitem[MS17]{MS17}
Hugh Morton and Peter Samuelson, \emph{The {HOMFLYPT} skein algebra of the
  torus and the elliptic {H}all algebra}, Duke Math. J. \textbf{166} (2017),
  no.~5, 801--854. \MR{3626565}

\bibitem[Que22]{Queff22}
Hoel Queffelec, \emph{Gl2 foam functoriality and skein positivity}, 2022,
  arXiv:2209.08794.

\bibitem[SW07]{SW07}
Adam~S. Sikora and Bruce~W. Westbury, \emph{Confluence theory for graphs},
  Algebr. Geom. Topol. \textbf{7} (2007), 439--478. \MR{2308953}

\bibitem[SW22]{SW22}
Alistair Savage and Bruce~W. Westbury, \emph{Quantum diagrammatics for
  {$F_4$}}, 2022, arXiv:2204.11976.

\bibitem[SY17]{SY17}
Takuro Sakamoto and Yasuyoshi Yonezawa, \emph{Link invariant and {$G_2$} web
  space}, Hiroshima Math. J. \textbf{47} (2017), no.~1, 19--41. \MR{3634260}

\bibitem[Thu14]{Thur14}
Dylan~Paul Thurston, \emph{Positive basis for surface skein algebras}, Proc.
  Natl. Acad. Sci. USA \textbf{111} (2014), no.~27, 9725--9732. \MR{3263305}

\end{thebibliography}
\end{document}